\newtheorem{Prop}{Proposition}
\newtheorem*{Prop*}{Proposition A}
\newtheorem{thm}{Theorem}
\newtheorem*{thm*}{Theorem}
\newtheorem{lemma}{Lemma}
\newtheorem*{lemma*}{Lemma A}
\newtheorem{cor}{Corollary}
\newtheorem*{cor*}{Corollary A}
\newtheorem*{Def}{Definition}
\newtheorem*{cfa}{Class Field Theory Axiom}
\begin{document}
		\title{Neukirch's Approach to the Reciprocity Map and Global Class Field theory in Positive Characteristic}
		\author{Seok Ho Yoon}
		\maketitle
		\begin{abstract}
		\emph{Neukirch developed an axiomatic and explicit approach to class field theory. This was applied to local fields and number fields but was never done for global function fields since he believed that geometric approach is more suitable. Nonetheless all the axioms can be verified and we plan to prove that global function fields satisfy the axioms Neukirch developed. }
		\end{abstract}
			\section{Introduction}
			
			Jüergen Neurkirch's approach to class field theory plays a fundamental role in explicit class field theory.  Neukirch's mechanism of class field theory provides an easy and explicit construction of the reciprocity map from several axioms. His approach is well presented in his research paper (\cite{NKFT}) and two books (\cite{NCFT} and \cite{NANT}).
			
			Recently, in view of Shinichi Mochizuki's IUT theory, a new interest to Neukirch's class field theory emerged, since Neukirch's mechanism does not use ring structures but Galois modules structure only. This is explained in detail in \cite[p.~8$-$10]{Rims}.
			
			In his original work, Neukirch applied his method to class field theory of local fields and of number fields. The case of global fields of positive characteristic, i.e. functions fields of curves over finite fields was never published. 
			
			Ivan Fesenko generalised Neukirch's method to higher class field theory (where relevant objects do not satisfy the Galois descent property) and developed class field theory of higher local fields, as well as other higher fields (\cite{HCFTI},\cite{HCFTP},\cite{HCFT0},\cite{HCFT}). He also generalised Neukirch's approach in another direction to apply to class field theory of complete discrete valuation fields with perfect residue field and more generally with imperfect residue field (\cite{PCFT},\cite{OGLR})

			This paper is dedicated to class field theory of global fields $K$ of positive characteristic $p$ using Neukirch's mechanism of class field theory.

			\newpage
		\section{Class Field Theory Axiom}
		Here we recall the main idea behind Neukirch's method on Class field theory. We shall briefly outline the abstract class field theory Neukirch constructs and how this applies to local fields. But before this, we first define and prove some results on $H^0$ and $H^{-1}$. Any omitted proof in this section can be found in \cite{NANT}.
		\subsection{$H^0$ and $H^1$}
		Let $G$ be a group and let $A$ be a $G$-module. If $G$ is finite then every $G$-module $A$ contains the norm group 
		$$N_GA=\{N_Ga:=\sum_{\sigma\in G}\sigma a|a\in G\}.$$ Since $\tau N_Ga=\sum_{\sigma\in G} \tau \sigma a =N_Ga$, we have $N_G A\subset A^G$. Then we define the norm residue group
		$$H^0(G,A)=A^G/N_G A.$$
		
		Now let $G$ be a cyclic group and $\sigma$ a generator of $G$. If $A$ is a $G$-module, then, in addition to $H^0(G,A)$ we consider the group 
		$$H^{-1}(G,A)={}_{N_G}A/I_GA$$ where ${}_{N_G}A=\{a\in A|N_Ga=0\}$ and $I_GA=\{\sigma a-a|a\in A\}.$ Those two objects are essential in class field theory and the objects $H^{i}(G,A)$ can be defined more abstractly for all $i\in \mathbb{Z}$. See $\cite{ANTC}$ for the full definition.
		
		Another concept which will come very useful is the Herbrand quotient. For $G$, a finite cyclic group and $A$ a $G$-module,we define the Herbrand quotient of $A$ to be
		$$h(G,A)=\frac{\#H^0(G,A)}{\#H^{-1}(G,A)}$$ provided that both these orders are finite. Here we state two properties of the Herbrand quotient:
		\begin{Prop}\label{herbrand}
			Let $G$ be a finite cyclic group and let $0 \rightarrow A \rightarrow B \rightarrow C \rightarrow0$ be an exact sequence of $G$-modules. Then
			$$h(G,B)=h(G,A)\cdot h(G,C),$$ in the sense that when two of these quotients are defined, so is the third and the equality holds. If $A$ is a finite $G$-module, then h$(G,A)=1$.
		\end{Prop}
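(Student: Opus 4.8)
The plan is to reduce the multiplicativity statement to a purely group-theoretic counting argument on an \emph{exact hexagon}, and to handle the finite-module case by a direct index computation. Since $G$ is finite cyclic, Tate cohomology is periodic of period two, so that $H^{-1}(G,-)$ and $H^0(G,-)$ play the roles of $\hat H^{1}$ and $\hat H^{0}$, and the long exact cohomology sequence attached to $0\to A\to B\to C\to 0$ closes up into a six-term cyclic exact sequence
\[
H^{-1}(G,A)\to H^{-1}(G,B)\to H^{-1}(G,C)\xrightarrow{\;\delta\;} H^{0}(G,A)\to H^{0}(G,B)\to H^{0}(G,C)\xrightarrow{\;\delta\;} H^{-1}(G,A).
\]
First I would establish this hexagon. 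One can either invoke the general long exact sequence of Tate cohomology from \cite{ANTC} together with the periodicity isomorphism $\hat H^{i}\cong\hat H^{i+2}$, or construct the two connecting maps $\delta$ by hand: the six horizontal maps are induced functorially by $A\to B\to C$, while the two corner maps are the snake homomorphisms built from $N_G$ and $\sigma-1$ (e.g. $H^{-1}(G,C)\to H^{0}(G,A)$ sends the class of $c\in{}_{N_G}C$ to the class of the element $a\in A^{G}$ satisfying $N_Gb=a$ for any lift $b$ of $c$). Checking exactness at each of the six vertices is the routine but laborious part.

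Second, I would prove the counting lemma: if $M_1\to M_2\to\cdots\to M_6\to M_1$ is an exact hexagon of \emph{finite} abelian groups, then $\#M_1\,\#M_3\,\#M_5=\#M_2\,\#M_4\,\#M_6$. This is immediate from the relation $\#M_i=\#\ker f_i\cdot\#\operatorname{im} f_i$ (first isomorphism theorem) together with exactness $\ker f_i=\operatorname{im} f_{i-1}$: writing $a_i=\#\operatorname{im} f_i$ gives $\#M_i=a_{i-1}a_i$, and both alternating products equal $a_1a_2a_3a_4a_5a_6$. Applied to our hexagon this reads
\[
\#H^{-1}(G,A)\,\#H^{-1}(G,C)\,\#H^{0}(G,B)=\#H^{-1}(G,B)\,\#H^{0}(G,A)\,\#H^{0}(G,C),
\]
which rearranges exactly to $h(G,B)=h(G,A)\,h(G,C)$. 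The clause ``two defined implies the third defined'' also falls out of the hexagon: if, say, all four groups in the definitions of $h(G,A)$ and $h(G,C)$ are finite, then exactness of $H^0(G,A)\to H^0(G,B)\to H^0(G,C)$ squeezes $H^{0}(G,B)$ between a quotient of $H^{0}(G,A)$ and a subgroup of $H^{0}(G,C)$, forcing it to be finite, and likewise for $H^{-1}(G,B)$; the remaining two cases follow by the cyclic symmetry of the hexagon.

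Third, for the finite case I would argue directly, avoiding the hexagon. Consider the two endomorphisms $\sigma-1$ and $N_G$ of $A$. Since $G=\langle\sigma\rangle$ we have $\ker(\sigma-1)=A^{G}$, $\operatorname{im}(\sigma-1)=I_GA$, while $\ker(N_G)={}_{N_G}A$ and $\operatorname{im}(N_G)=N_GA$; finiteness of $A$ then yields $\#A=\#A^{G}\,\#I_GA=\#{}_{N_G}A\,\#N_GA$. Substituting into
\[
h(G,A)=\frac{\#H^0(G,A)}{\#H^{-1}(G,A)}=\frac{\#A^{G}\,\#I_GA}{\#N_GA\,\#{}_{N_G}A}
\]
makes numerator and denominator both equal to $\#A$, so $h(G,A)=1$.

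The main obstacle is the careful construction of the connecting maps $\delta$ and the verification of exactness of the hexagon at all six vertices; everything afterward is bookkeeping with orders of finite abelian groups. If one is willing to cite the machinery of Tate cohomology and its period-two periodicity for cyclic groups from \cite{ANTC}, this obstacle disappears and the argument becomes short.
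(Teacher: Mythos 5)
Your argument is correct, and it is essentially the proof the paper defers to (the paper omits this proof, citing \cite{NANT}, where the standard argument is exactly this: the period-two exact hexagon for cyclic $G$, the alternating-product counting lemma for exact hexagons of finite groups, and the identity $\#A=\#\ker f\cdot\#\operatorname{im}f$ applied to the endomorphisms $\sigma-1$ and $N_G$ for the finite-module case). The bookkeeping in your hexagon labelling and in the finiteness transfer ("two defined implies the third") checks out.
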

			Now we introduce semilocal theory which will be useful for the transition between local and global class field theory. The rest of content in this chapter is taken from \cite[p.~183$-$185]{Lang} .
			
			We consider a finite group $G$ acting on an abelian group $A$. Assume that $A$ is direct product of subgroups, $$A=\prod_{i=1}^{s}A_i,$$ and that $G$ permutes these groups $A_i$ transitively. Let $G_1$ be the decomposition group of $A_1$, i.e. subgroup of elements which fix $A_1$. We call ($G_1,A_1)$ its local component. For each $a\in A$ we can uniquely write $a=\sum_{i=1}^s a_i$ where $a_i\in A_i$. For each $i$ we choose $\sigma_i$ such that $\sigma_iA_1=A_i$. Then we have left coset decomposition 
			$$G=\cup_{i=1}^s\sigma_iG_1.$$ We note that each $a_i\in A_i$ can be written as $\sigma_ia_i'$ for a uniquely determined element $a_i'\in A_1$.
			\begin{lemma}
				The projection $\pi:A\rightarrow A_1$ induces an isomorphism $H^0(G,A)\cong H^0(G_1,A_1)$.
			\end{lemma}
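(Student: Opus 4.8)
The plan is to factor the statement into two independent pieces: first, that $\pi$ carries the $G$-invariants $A^G$ isomorphically onto the $G_1$-invariants $A_1^{G_1}$, and second, that under this isomorphism the norm subgroup $N_GA$ corresponds exactly to $N_{G_1}A_1$. Granting both, the map induced by $\pi$ on the quotients $A^G/N_GA\to A_1^{G_1}/N_{G_1}A_1$ is automatically an isomorphism. Throughout I would normalize the representatives so that $\sigma_1=\mathrm{id}$, and I would first record the basic compatibility that $\pi$ is $G_1$-equivariant: for $\tau\in G_1$ and $i\neq 1$ one has $\tau A_i\neq A_1$ (otherwise $A_i=\tau^{-1}A_1=A_1$), so the $A_1$-component of $\tau a$ is $\tau a_1$, i.e. $\pi(\tau a)=\tau\pi(a)$. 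In particular $\pi$ sends $A^G$ into $A_1^{G_1}$.

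For the first piece, the key observation is that an element $a=\sum_i a_i$ of $A^G$ is completely determined by its $A_1$-component. Indeed, comparing $A_i$-components in the identity $\sigma_i a=a$ gives $a_i=\sigma_i a_1$ for every $i$; hence $\pi(a)=a_1=0$ forces $a=0$, proving injectivity on $A^G$. For surjectivity I would, given $b\in A_1^{G_1}$, set $a=\sum_{i}\sigma_i b$; this is independent of the choice of representatives because $b$ is $G_1$-fixed, it is $G$-invariant because left multiplication by any $\rho\in G$ merely permutes the cosets $\sigma_iG_1$, and it clearly satisfies $\pi(a)=b$. Thus $\pi\colon A^G\xrightarrow{\sim} A_1^{G_1}$.

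The main obstacle — and the only genuine computation — is the matching of norm groups, for which I would establish the cleaner identity $\pi(N_Ga)=N_{G_1}\bigl(\sum_{j}a_j'\bigr)$, where $a_j'\in A_1$ are the elements determined by $a_j=\sigma_j a_j'$. The idea is to expand $\pi(N_Ga)=\sum_{\sigma\in G}\pi(\sigma a)$ and to note that $\sigma a_j=\sigma\sigma_j a_j'$ contributes to the $A_1$-component precisely when $\sigma\sigma_j\in G_1$; regrouping the sum over $\sigma$ according to this unique index $j$ and writing $\sigma=\tau\sigma_j^{-1}$ with $\tau\in G_1$ collapses everything into $\sum_j\sum_{\tau\in G_1}\tau a_j'$. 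The delicate point to handle carefully is exactly this bookkeeping — which component of $\sigma a$ lands in $A_1$, and the reindexing by the right cosets $G_1\sigma_j^{-1}$; everything else is formal. Since $\sum_j a_j'$ ranges over all of $A_1$ as $a$ ranges over $A$ (take $a\in A_1$ to see this), the identity yields $\pi(N_GA)=N_{G_1}A_1$ exactly, and combined with the isomorphism on invariants this completes the proof.
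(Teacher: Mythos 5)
Your proposal is correct and follows essentially the same route as the paper: identify $A^G$ with $A_1^{G_1}$ via the projection (an invariant element being determined by its $A_1$-component, with surjectivity from $b\mapsto\sum_i\sigma_i b$), and then match $N_GA$ with $N_{G_1}A_1$ by the same regrouping of $\sum_{\sigma\in G}$ over the cosets of $G_1$. The only difference is presentational — you carry out the norm computation for a general $a=\sum_j\sigma_j a_j'$, while the paper does it for an element supported in a single component and extends by additivity.
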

			\begin{proof}
				We first observe that $A^G$ consists of all elements of the form 
				$$\sum_{i=1}^{s}\sigma_ia_1$$ with $a_1 \in A_1^{G_1}$. It is clear that such an element is fixed under $G$. On the other hand if $a=\sum_{i=1}^{s}\sigma_ia_i'$ where $a_i'\in A_1$ for each $i$, is fixed under $G$, then for a fixed index $j$ we apply $\sigma_j^{-1}$ and see that $a_j'=\sigma_j^{-1}\sigma_ja_j'$ is the $A_1$-component of $\sigma_j^{-1}a=a$. Hence $a'_j=a_1'$ for all $j$. In particular, an element of $A^G$ is uniquely determined by its first component, and thus the projection gives an isomorphism 
				$$A^G \cong A_1^{G_1}.$$ On the other hand, for a fixed $j$ and $a_1 \in A_1$ we have
				$$N_G(\sigma_ja_1)=\sum_{\sigma\in G}\sigma a_1=\sum_{i=1}^{s}\sigma_iN_{G_1}(a_1).$$ This shows that $N_G(A)$ consists precisely of those elements of the form
				$$\sum_{i=1}^{s}\sigma_iN_{G_1}(a_1)$$ and thus it is clear that $A^G/N_G(A)\cong A^{G_1}/N_{G_1}(A_1)$.
			\end{proof}
			\begin{lemma}
				There is an isomorphism $H^{-1}(G,A)\cong H^{-1}(G_1,A_1)$.
			\end{lemma}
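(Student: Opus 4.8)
The plan is to dualise the proof of the previous lemma. There the projection $\pi\colon A\to A_1$ did the work because $H^{0}$ is a subquotient of the invariants $A^{G}$; for $H^{-1}$, which is a subquotient of the coinvariants $A/I_{G}A$, the natural map runs the other way, so I would instead use the inclusion $\iota\colon A_1\hookrightarrow A$. This map is $G_1$-equivariant, since $\tau\iota(b)=\tau b=\iota(\tau b)$ for $\tau\in G_1$ and $b\in A_1$. Conceptually the statement is the $q=-1$ instance of Shapiro's lemma for the induced module $A=\prod_i\sigma_i A_1$, but I would argue by hand to stay within the tools already set up. I would also fix the coset representatives so that $\sigma_1=1$, so that $\pi$ is exactly projection onto the first component and $\pi\bigl(\sum_i\sigma_i c\bigr)=c$ for every $c\in A_1$.

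First I would establish the isomorphism on coinvariants $A_1/I_{G_1}A_1\cong A/I_{G}A$, the homological analogue of the previous lemma. Define $\tilde p\colon A\to A_1$ by $\tilde p(a)=\sum_{i=1}^{s}a_i'$, where $a=\sum_i\sigma_i a_i'$ is the unique decomposition with $a_i'\in A_1$. Then $\tilde p\circ\iota=\mathrm{id}_{A_1}$, while $a-\iota(\tilde p(a))=\sum_i(\sigma_i a_i'-a_i')\in I_{G}A$, so $\iota$ and $\tilde p$ become mutually inverse after passing to the quotients. The one computation needing care is that $\tilde p$ carries $I_{G}A$ into $I_{G_1}A_1$: writing $\sigma\sigma_j=\sigma_{k(j)}\tau_j$ with $\tau_j\in G_1$ and $j\mapsto k(j)$ a permutation, one finds $\tilde p(\sigma a)-\tilde p(a)=\sum_j(\tau_j a_j'-a_j')\in I_{G_1}A_1$. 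This makes the induced map $p\colon A/I_{G}A\to A_1/I_{G_1}A_1$ well defined and inverse to $\iota_*$.

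It remains to cut this isomorphism down to the norm kernels, since $H^{-1}(G,A)=\ker\bigl(N_G\colon A/I_{G}A\to A^{G}\bigr)$ and likewise for $G_1$. Here I would invoke the previous lemma, which supplies the isomorphism $\pi\colon A^{G}\xrightarrow{\sim}A_1^{G_1}$, and check that the square formed by $\iota_*$, $N_{G_1}$, $N_G$ and $\pi$ commutes, i.e. $\pi\bigl(N_G\iota(b)\bigr)=N_{G_1}b$ for $b\in A_1$. This is immediate from $N_G\iota(b)=\sum_{\sigma\in G}\sigma b=\sum_i\sigma_i N_{G_1}b$ together with $\pi\bigl(\sum_i\sigma_i c\bigr)=c$. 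Since $\iota_*$ and $\pi$ are isomorphisms and the square commutes, $b\in\ker N_{G_1}$ if and only if $\iota_* b\in\ker N_G$; hence $\iota_*$ restricts to a bijection $\ker N_{G_1}\xrightarrow{\sim}\ker N_G$, which is exactly the desired isomorphism $H^{-1}(G_1,A_1)\cong H^{-1}(G,A)$.

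I expect the main obstacle to be bookkeeping rather than anything conceptual: first, verifying cleanly that $\tilde p$ respects the augmentation subgroups, where the permutation $k$ and the factors $\tau_j$ must be tracked; and second, remembering that $H^{-1}$ is only a subquotient of $A/I_{G}A$, so the homological isomorphism must be shown compatible with the norm maps before it can be restricted. That compatibility is precisely where the previous lemma re-enters through $\pi$.
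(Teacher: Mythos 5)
Your proposal is correct and is essentially the paper's proof in dual packaging: your $\tilde p$ is exactly the map $\lambda(a)=\sum_i a_i'$ that the paper uses, and the computations you flag (the permutation identity $\sigma\sigma_j=\sigma_{g(j)}\tau_{g(j)}$ showing $\tilde p(I_GA)\subset I_{G_1}A_1$, the identity $a-\iota\tilde p(a)=\sum_i(\sigma_ia_i'-a_i')$, and $N_G\iota(b)=\sum_i\sigma_iN_{G_1}(b)$) are precisely the steps of the paper's argument. The only difference is organisational: the paper works directly with $\lambda:\ker N_G\to\ker N_{G_1}$, checking surjectivity and that its induced map on the quotients is injective, and so never needs to invoke the previous lemma, whereas you first establish the coinvariants isomorphism and then cut down to the norm kernels via the commuting square with $\pi$.
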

			\begin{proof}
				Let $a=\sum_{i=1}^{s}\sigma_ia_i'$ where $a_i' \in A_1$. If we denote $\alpha = \sum_{i=1}^sa_i'$ we obtain 
				$$N_G(a)=\sum_{j=1}^{s}\sigma_jN_{G_1}(\alpha).$$ 
				Thus it is easy to see that $N_G(a)=0$ if and only if $N_{G_1}(\alpha)=0$. Therefore the map
				$$a\mapsto \alpha$$ is a homomorphism 
				$$\lambda:Ker N_G \rightarrow Ker N_{G_1},$$ which is obviously surjective. We now show that $\lambda$ maps $I_G A$ into $I_{G_1}A_1$. If $\sigma\in G$, then there is a permutation $g$ of the indices $i$ such that 
				$$\sigma\sigma_i=\sigma_{g(i)}\tau_{g(i)}$$ with some $\tau_{g(i)}\in G_1$. Hence
				$$\lambda(\sigma a-a)=\sum_{i=1}^s(\tau_{g(i)}a_i'-a_i')$$ thus proving our assertion. To conclude the proof, it will suffice to show that if $\lambda(a)=0$ then $a\in I_G(A)$. However, if $\alpha=a_1'+\cdots+a_s'=0$ we can write 
				$$a=\sum_{i=1}^s(\sigma_ia_i'-a_i'),$$ and so $a\in I_GA$.
			\end{proof}
		Finally, note that if $L/K$ is a finite Galois extension of fields with Galois group $G=G(L/K)$ then $H^{1}(G,L^\times)=1.$ This result is commonly known as \textit{Hilbert 90} theorem and is of huge importance in local class field theory.
		\subsection{Abstract Class field theory}
		Let $k_0$ be a field and $G=G(k_0^{sep}/k_0)$ be its absolute Galois group. For each finite separable extension $K/k_0$, denote $G_K=G(k_0^{sep}/K)$. Assume that there is a $\hat{\mathbb{Z}}$-extension $\tilde{k}_0/k_0$ and fix the isomorphism $deg:G(\tilde{k}_0/k_0)\cong\hat{\mathbb{Z}}$.
		
		For every finite extension $K/k_0$ we set $\tilde{K}=K\cdot\tilde{k}_0$ and $f_K=[K\cap\tilde{k}_0:k_0].$ Then the map $deg$ induces another isomorphism
		\[deg_K=\frac{1}{f_K} deg:G(\tilde{K}/K)\cong \hat{\mathbb{Z}}.\]	
		\begin{Def} The element $\varphi_K\in G(\tilde{K}/K)$ with $deg_K(\varphi_K)=1$ is called the Frobenius over $K$.	
		\end{Def}
		For every finite extension $L/K$ we set $L^0=L\cap \tilde{K}$, $f_{L/K}=[L^0:K]=\frac{f_L}{f_K}$, $\varphi_{L^0/K}=\varphi_K|_{L^0}$. We have $\varphi_{L}|_{\tilde{K}}=\varphi_K^{f_{L/K}}$. Let $\tilde{L}=L\cdot \tilde{K}$. Then $deg_K:G(\tilde{K}/K)\cong \hat{\mathbb{Z}}$ induces a surjective homomorphism
		$$deg_K:G(\tilde{L}/K)\rightarrow \hat{\mathbb{Z}},$$ and we set $$\phi(\tilde{L}/K)=\{\tilde{\sigma}\in G(\tilde{L}/K): deg_K(\tilde{\sigma})\in \mathbb{Z}_{>0}\}.$$ It can easily be proved that the map
		$$\phi(\tilde{L}/K)\rightarrow G(L/K), \text{ } \tilde{\sigma}\mapsto \tilde{\sigma}|_L,$$ is surjective. If $\sigma=\tilde{\sigma}|_L$, then we call $\tilde{\sigma}$ a Frobenius lift of $\sigma$.
		
		Let $A$ be a multiplicative $G$-module. For each field $k_0\subset K\subset k_0^{sep}$ let $A_K= A^{G_K}.$ If $L/K$ is a finite extension, then $A_K\subset A_L$ and we have the norm map $N_{L/K}:A_L\rightarrow A_K$ defined by
		$$N_{L/K}(a)=\prod_{\sigma}a^\sigma$$ where $\sigma$ runs through a system of right representatives of $G_K/G_L$. Similarly to the usual field norm, it can be checked that this norm map is transitive. If $L/K$ is a Galois extension, then $A_L$ is a $G(L/K)$ module and $A_K=A_L^{G(L/K)}$.
		\begin{Def}
			A henselian valuation of $A_{k_0}$ with respect to $deg$ is a homomorphism 
			$$v:A_{k_0}\rightarrow \hat{\mathbb{Z}}$$ with the properties
			
			i) $v(A_{k_0})=Z\supset\mathbb{Z}$ and $Z/nZ \cong \mathbb{Z}/n\mathbb{Z}$ for all $n\in \mathbb{Z}_{>0}$.
			
			ii) $v(N_{K/{k_0}} A_K)=f_KZ$ for all fields $K.$
		\end{Def}
		Note that the definition does not imply that $v$ is a discrete valuation, and that what we call ``henselian valuation" in this paper is very different, in general, from what is called ``henselian valuation" in valuation theory.
		
		For every field $K$, the henselian valuation $v:A_{k_0}\rightarrow \hat{\mathbb{Z}}$ yields the homomorphism
		$$v_K=\frac{1}{f_K}v\circ N_{K/k_0}:A_K\rightarrow \hat{\mathbb{Z}}$$ with image $Z$.
		We say $\pi_K\in A_K$ with valuation $1$ is a prime of $A_K$.
		
		Furthermore, we now assume thay $G$-module $A$ satisfies the following axiomatic condition:
		
		\begin{cfa}
			For every finite cyclic extension $L/K$ 	
			
			$\#H^i(G(L/K),A)= 
			\begin{cases}
			[L:K] &\text{for } i=0\\
			1 & \text{for } i=-1\\
			\end{cases}$		
		\end{cfa}
		Then for each finite Galois extension $L/K$ 
		we define the reciprocity map $$r_{L/K}:G(L/K)\rightarrow A_K/N_{L/K}A_L$$ given by
		$$r_{L/K}(\sigma): N_{\Sigma/K}(\pi_{\Sigma})\text{ mod } N_{L/K}A_L,$$ where $\Sigma$ is the fixed field of a Frobenius lift $\tilde{\sigma}\in\phi(\tilde{L}/K)$ of $\sigma\in G(L/K)$ and $\pi_\Sigma\in A_\Sigma$ is a prime element. Neukirch proves that this map is well defined and is a homomorphism. 
		
		We define a \textbf{class field theory} for such $G$-module to be a pair of homomorphisms $$(deg:G(\tilde{k}_0/k_0)\rightarrow \hat{\mathbb{Z}},\text{ } v:k_0^\times\rightarrow \hat{\mathbb{Z}})$$ where $deg$ is continuous and bijective and $v$ is a henselian valuation with respect to $deg$. Now we are ready to state the main theorem of Class Field Theory:
		\begin{thm}
			If $L/K$ is a finite Galois extension, then $$r_{L/K}:G(L/K)^{ab}\rightarrow A_K/N_{L/K}$$ is an isomorphism. 
		\end{thm}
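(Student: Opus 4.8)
The plan is to follow Neukirch's abstract strategy, establishing the theorem in three stages: first for cyclic extensions, then reducing the general Galois case to the cyclic one via the abelianization, and finally bootstrapping injectivity from surjectivity. Throughout I would rely on the Class Field Theory Axiom, which pins down $\#H^0(G(L/K),A)=[L:K]$ and $\#H^{-1}(G(L/K),A)=1$ for cyclic $L/K$, together with the henselian valuation $v$ and the Frobenius machinery already set up.

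\textbf{Step 1 (the cyclic case).} For $L/K$ cyclic I would first show $r_{L/K}$ is surjective and then count. Surjectivity comes from the explicit description: given a prime $\pi_\Sigma\in A_\Sigma$ attached to a Frobenius lift, the valuation condition ii) guarantees that the norms $N_{\Sigma/K}(\pi_\Sigma)$ already generate $A_K$ modulo $N_{L/K}A_L$, since $v_K$ surjects onto $Z\supset\mathbb{Z}$ and $v_K(N_{L/K}A_L)=[L:K]Z$. Concretely, the axiom gives $\#\bigl(A_K/N_{L/K}A_L\bigr)=\#H^0(G(L/K),A)=[L:K]=\#G(L/K)$, so once $r_{L/K}$ is known to be a surjective homomorphism between two finite groups of the same order, it is automatically an isomorphism. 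The content here is therefore the surjectivity, which I would extract from Neukirch's computation that $r_{L/K}(\varphi_K|_L)$ has valuation $f_{L/K}$, forcing the image to exhaust the quotient.

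\textbf{Step 2 (reduction of the general Galois case).} For arbitrary finite Galois $L/K$ the map factors through $G(L/K)^{\mathrm{ab}}$ because $A_K/N_{L/K}A_L$ is abelian, and this is exactly the target displayed in the theorem. To prove it is an isomorphism I would use the standard devissage over the lattice of intermediate fields. Writing $G=G(L/K)$, one compares $r_{L/K}$ with $r_{M/K}$ and $r_{L/M}$ for intermediate $M$ via the compatibility (functoriality) of the reciprocity map with inflation, restriction, and the norm maps $N_{M/K}$; since $G^{\mathrm{ab}}$ is built from cyclic layers, a Herbrand-quotient and induction argument on $[L:K]$ reduces everything to Step 1. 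Here Proposition \ref{herbrand} is the key tool: it lets me propagate the order computation $\#H^0=[L:K]$ through short exact sequences of the relevant subgroups and conclude that the orders of $G^{\mathrm{ab}}$ and $A_K/N_{L/K}A_L$ agree in general, not merely in the cyclic case.

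\textbf{Step 3 (injectivity and the main obstacle).} The hard part, as always in Neukirch's mechanism, is injectivity in the general case, equivalently showing the kernel of $r_{L/K}$ on $G^{\mathrm{ab}}$ is trivial. The surjectivity and the order count handle the cyclic rungs, but for a general solvable-by-cyclic tower one must verify that the diagram relating $r_{L/K}$ to the layers actually commutes, so that injectivity lifts through the filtration. I expect the genuine difficulty to lie in checking this commutativity — in particular the compatibility of $r_{L/K}$ with the transfer (Verlagerung) map on $G^{\mathrm{ab}}$ and with $N_{M/K}$ — since the definition of $r_{L/K}$ via Frobenius lifts and fixed fields $\Sigma$ is delicate and the well-definedness invoked at the end of the excerpt must be leveraged carefully. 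Once that functoriality is in hand, an induction on the solvable length of $G$, with the cyclic case as base and Proposition \ref{herbrand} controlling the orders at each step, closes the argument and yields the isomorphism $r_{L/K}:G(L/K)^{\mathrm{ab}}\xrightarrow{\sim}A_K/N_{L/K}A_L$.
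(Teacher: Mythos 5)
First, a remark on the comparison itself: the paper does not actually prove this theorem. It is stated as part of the recalled abstract machinery, with all omitted proofs deferred to \cite{NANT}, and the only ingredient the paper explicitly flags is that Neukirch separately establishes well-definedness and the homomorphism property of $r_{L/K}$. So your proposal has to be measured against Neukirch's own argument rather than anything in this text.

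Your outline has the right overall shape (cyclic case first, then d\'evissage, using $\#H^0=[L:K]$ for the order count), but the load-bearing step fails as written. In Step 1 you derive surjectivity for a general cyclic $L/K$ from the valuation data $v_K(A_K)=Z$ and $v_K(N_{L/K}A_L)=f_{L/K}Z$. That detects the quotient only when $f_{L/K}=[L:K]$, i.e.\ when $L\subseteq\tilde K$; for a cyclic extension with $f_{L/K}=1$ (the ``totally ramified'' case) the valuation gives no information at all about $A_K/N_{L/K}A_L$, and $\varphi_K|_L$ need not generate $G(L/K)$. The missing idea is Neukirch's reduction: for a generator $\sigma$ one passes to the fixed field $\Sigma$ of a Frobenius lift $\tilde\sigma\in\phi(\tilde L/K)$, observes that $L\Sigma/\Sigma$ \emph{is} a subextension of $\tilde\Sigma/\Sigma$ (where your valuation argument does work), and then transports the conclusion down to $L/K$ through the $N_{\Sigma/K}$-functoriality square. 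Relatedly, you never invoke the second half of the axiom, $\#H^{-1}(G(L/K),A)=1$; this is the abstract Hilbert 90 that drives the injectivity/descent step in the cyclic case, and an argument that never touches it cannot be complete. Finally, in Step 2 you propose to control the order of $A_K/N_{L/K}A_L$ for non-cyclic $G$ via Proposition \ref{herbrand}, but the Herbrand quotient is defined only for cyclic groups, so it cannot be applied to $G(L/K)$ itself; the general case is handled instead through the lattice of intermediate fields, the functoriality diagrams, and (for non-abelian $G$) compatibility with the transfer --- which you correctly name in Step 3 but do not establish.
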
 The inverse map of $r_{L/K}$ yields a surjective homomorphism 
		$(\text{ },L/K):A_K\rightarrow G(L/K)^{ab}$ with kernel $N_{L/K}A_L$. This map is called the \textbf{norm residue symbol} of $L/K$. The norm residue symbol satisfies certain functoriality conditions. In particular we have the commutative diagram below (for $L\subset L'$ and $K\subset K')$. 
		\begin{center}
			\begin{tikzpicture}
			\matrix(m) [matrix of math nodes, row sep=3em, column sep=4em, minimum width=2em]
			{
				A_{K'}&	G(L'/K')\\
				A_K&	G(L/K)\\
			};
			\path[-stealth]
			(m-1-1) edge node [left] {$N_{K'/K}$} (m-2-1)
			(m-1-1) edge node [above] { $($ , $L'/K')$} (m-1-2)
			(m-2-1) edge node [above] { $($ , $L/K)$} (m-2-2)
			(m-1-2) edge node [right] {} (m-2-2);
			\end{tikzpicture}
			\end {center}	
			Passing onto projective limits, the norm residue symbol extends to infinite Galois extensions $L/K$ since we have the above commutative diagram. In the particular case of the extension $\tilde{K}/K$ we have
			\begin{Prop}\label{main}
				$deg_K\circ($ , $\tilde{K}/K)=v_K$. In particular $(a,\tilde{K}/K)=\varphi_K^{v_K(a)}$.
			\end{Prop}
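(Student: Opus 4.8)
The plan is to reduce the statement to a mod-$n$ computation over the finite subextensions of $\tilde{K}/K$ and then pass to the inverse limit. Write $K_n$ for the subextension of $\tilde{K}/K$ with $[K_n:K]=n$, so that $G(K_n/K)$ is cyclic and $deg_K$ induces an isomorphism $G(K_n/K)\cong \mathbb{Z}/n\mathbb{Z}$. By the functoriality of the norm residue symbol recorded in the commutative diagram above (with $K'=K$ and $L=K_n\subset L'=K_m$), restriction to $K_n$ sends $(a,\tilde{K}/K)$ to $(a,K_n/K)$, and under $deg_K:G(\tilde{K}/K)\cong\hat{\mathbb{Z}}$ this restriction corresponds to reduction modulo $n$. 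Hence it suffices to prove that for every $n$ the two homomorphisms $deg_K\circ(\text{ },K_n/K)$ and $v_K$ agree after reducing modulo $n$, i.e. as maps $A_K\rightarrow \mathbb{Z}/n\mathbb{Z}$; taking the limit over $n$ then yields the displayed identity in $\hat{\mathbb{Z}}$, and the final assertion $(a,\tilde{K}/K)=\varphi_K^{v_K(a)}$ is immediate since $\varphi_K$ is by definition the element of $G(\tilde{K}/K)$ with $deg_K(\varphi_K)=1$.

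Both of the maps in question factor through $A_K/N_{K_n/K}A_{K_n}$. For the reciprocity side this is because the norm residue symbol $(\text{ },K_n/K):A_K\rightarrow G(K_n/K)$ is surjective with kernel exactly $N_{K_n/K}A_{K_n}$; composing with $deg_K$ we obtain an isomorphism $A_K/N_{K_n/K}A_{K_n}\cong \mathbb{Z}/n\mathbb{Z}$. For the valuation side I would first compute the image $v_K(N_{K_n/K}A_{K_n})$. Using transitivity of the norm, $N_{K_n/k_0}=N_{K/k_0}\circ N_{K_n/K}$, together with the henselian axiom (ii), gives
$$v_K(N_{K_n/K}A_{K_n})=\tfrac{1}{f_K}v(N_{K_n/k_0}A_{K_n})=\tfrac{1}{f_K}f_{K_n}Z=f_{K_n/K}Z.$$
Since $K_n$ lies inside $\tilde{K}=K\tilde{k}_0$, the extension $K_n/K$ is ``constant'' (it is cut out inside the $\hat{\mathbb{Z}}$-tower), so $f_{K_n}=nf_K$ and hence $f_{K_n/K}=[K_n:K]=n$; therefore $v_K(N_{K_n/K}A_{K_n})=nZ$ and $v_K$ descends to a well-defined homomorphism $A_K/N_{K_n/K}A_{K_n}\rightarrow Z/nZ$.

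This descended map is in fact an isomorphism. Indeed, by the Class Field Theory Axiom $\#\bigl(A_K/N_{K_n/K}A_{K_n}\bigr)=\#H^0(G(K_n/K),A)=[K_n:K]=n$, while $\#(Z/nZ)=n$ by property (i) of the henselian valuation; since $v_K$ is surjective onto $Z$, the induced map is a surjection of finite groups of equal order, hence an isomorphism. Thus both sides of the mod-$n$ statement are isomorphisms $A_K/N_{K_n/K}A_{K_n}\xrightarrow{\sim} \mathbb{Z}/n\mathbb{Z}$, and to see that they coincide it is enough to check that they agree on a single generator.

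For this I would evaluate everything on a prime element $\pi_K\in A_K$, which has $v_K(\pi_K)=1$ and whose class therefore generates the cyclic group $A_K/N_{K_n/K}A_{K_n}$. On the valuation side $v_K(\pi_K)=1$. On the reciprocity side I must compute $(\pi_K,K_n/K)$ from the definition of $r_{K_n/K}$: take the Frobenius lift $\tilde{\sigma}=\varphi_K\in\phi(\tilde{K}/K)$, which has $deg_K(\varphi_K)=1\in\mathbb{Z}_{>0}$ and restricts to the generator $\varphi_K|_{K_n}$ of $G(K_n/K)$. Because $\varphi_K$ topologically generates $G(\tilde{K}/K)\cong\hat{\mathbb{Z}}$, its fixed field is $\Sigma=K$, so $N_{\Sigma/K}(\pi_\Sigma)=\pi_K$ and hence $r_{K_n/K}(\varphi_K|_{K_n})=\pi_K\bmod N_{K_n/K}A_{K_n}$; inverting, $(\pi_K,K_n/K)=\varphi_K|_{K_n}$ and $deg_K((\pi_K,K_n/K))=1$. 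Thus both isomorphisms send the generator $\bar{\pi}_K$ to $1\in\mathbb{Z}/n\mathbb{Z}$ and therefore coincide, completing the mod-$n$ step and, in the limit, the proof. I expect the main obstacle to be the second paragraph: correctly pinning down $v_K(N_{K_n/K}A_{K_n})=nZ$, which is exactly where the henselian axiom (ii), the transitivity of the norm, and the identification $f_{K_n/K}=n$ for constant extensions must be combined, and then matched against the order supplied by the Class Field Theory Axiom.
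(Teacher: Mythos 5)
Your proof is correct and takes essentially the route of the source to which the paper defers for this statement (the paper omits the argument, citing \cite{NANT}): reduce to the finite levels $K_n\subset\tilde{K}$, check that $deg_K\circ(\ ,K_n/K)$ and $v_K\bmod n$ both kill $N_{K_n/K}A_{K_n}$ and induce isomorphisms onto $\mathbb{Z}/n\mathbb{Z}$, and compare them on a prime element using the Frobenius lift $\varphi_K$ with fixed field $\Sigma=K$. The one point you pass over silently --- that $1\in Z$ generates $Z/nZ$, which you need to conclude that $\bar{\pi}_K$ generates $A_K/N_{K_n/K}A_{K_n}$ --- follows from $\mathbb{Z}\subset Z\subset\hat{\mathbb{Z}}$ together with $\#(Z/nZ)=n=\#(\hat{\mathbb{Z}}/n\hat{\mathbb{Z}})$, so it is not a genuine gap.
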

			An another important feature of classical class field theory is the Existence Theorem. For every field $K$ we introduce a topology in $A_K$ as follows. For each $a\in A_K$ we take the cosets $aN_{L/K}A_L$ as a basis of open neighbourhoods of $a$, where $L/K$ runs through all finite Galois extensions of $K$. We call this topology the norm topology of $A_K$. Note that $v_K$ and $N_{L/K}$ is continuous if $L/K$ is a finite extension. Then we can classify the finite abelian extensions.
			\begin{thm}
				The map $l\mapsto \mathscr{N}_L=N_{L/K}A_K$ yields a $1-1$ correspondence between the finite abelian extensions $L/K$ and the open subgroups $\mathscr{N}$ of $A_K$. Moreover 
				$$L_1\subset L_2 \iff \mathscr{N}_{L_1}\supset\mathscr{N}_{L_2}, \text{ }\mathscr{N}_{L_1\cdot L_2}=\mathscr{N}_{L_1}\cap \mathscr{N}_{L_2},\text{ } \mathscr{N}_{L_1\cap L_2}=\mathscr{N}_{L_1}\cdot\mathscr{N}_{L_2}.$$
			\end{thm}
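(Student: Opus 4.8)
The plan is to deduce everything from the reciprocity isomorphism $r_{L/K}$ together with the functoriality of the norm residue symbol, after first showing that the norm topology is already generated by the norm groups of \emph{abelian} extensions. Throughout write $\mathscr N_L=N_{L/K}A_L$. First I would treat openness and the reduction to abelian extensions. For a finite Galois $L/K$ with group $G=G(L/K)$, let $L^{ab}=L^{[G,G]}$ be its maximal subextension abelian over $K$. Transitivity of the norm gives $N_{L/K}A_L=N_{L^{ab}/K}\bigl(N_{L/L^{ab}}A_L\bigr)\subseteq\mathscr N_{L^{ab}}$, hence $[A_K:\mathscr N_{L^{ab}}]\le[A_K:N_{L/K}A_L]$; on the other hand the reciprocity isomorphism computes both indices as $\#G(L/K)^{ab}=\#G(L^{ab}/K)$, which is finite. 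Equal finite indices force $N_{L/K}A_L=\mathscr N_{L^{ab}}$, so every norm group is already the norm group of an abelian extension and is of finite index. Since the $\mathscr N_L$ form a neighbourhood basis of $1$, every open subgroup of $A_K$ contains some $\mathscr N_L=\mathscr N_{L^{ab}}$ and is therefore itself of finite index.

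Next I would establish the correspondence inside a fixed finite abelian $L/K$. Here $r_{L/K}$ is an isomorphism $G(L/K)\xrightarrow{\sim}A_K/\mathscr N_L$, equivalently a surjection $(\ ,L/K)\colon A_K\twoheadrightarrow G(L/K)$ with kernel $\mathscr N_L$. Using the compatibility $(a,M/K)=(a,L/K)|_M$ of the norm residue symbol for intermediate $K\subseteq M\subseteq L$ (so that $M/K$ is automatically abelian), one gets $\mathscr N_M=\ker(\ ,M/K)=\{a\in A_K:(a,L/K)\in G(L/M)\}$, i.e. $\mathscr N_M$ is the preimage of $G(L/M)$ and $\mathscr N_M/\mathscr N_L\cong G(L/M)$. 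Thus $M\mapsto\mathscr N_M$ is an order-reversing bijection between the intermediate fields $K\subseteq M\subseteq L$ and the subgroups $\mathscr N$ with $\mathscr N_L\subseteq\mathscr N\subseteq A_K$, the inverse sending $\mathscr N$ to $L^{H}$ for $H=(\ ,L/K)(\mathscr N)$.

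With these two ingredients the bijection of the theorem follows. For surjectivity, given an open subgroup $\mathscr N\subseteq A_K$ I choose an abelian $L/K$ with $\mathscr N_L\subseteq\mathscr N$ (possible by the first paragraph) and read off $\mathscr N=\mathscr N_M$ from the correspondence. For injectivity, if $L_1,L_2$ are abelian with $\mathscr N_{L_1}=\mathscr N_{L_2}$, I set $L=L_1L_2$ and apply the correspondence inside $L/K$: the equality forces $G(L/L_1)=G(L/L_2)$, whence $L_1=L_2$. The lattice identities are then purely formal translations through the correspondence inside $L=L_1L_2$: the equivalence $L_1\subseteq L_2\iff\mathscr N_{L_1}\supseteq\mathscr N_{L_2}$ is norm transitivity together with the order-reversing property, while $\mathscr N_{L_1L_2}=\mathscr N_{L_1}\cap\mathscr N_{L_2}$ and $\mathscr N_{L_1\cap L_2}=\mathscr N_{L_1}\mathscr N_{L_2}$ correspond respectively to $G(L/L_1L_2)=G(L/L_1)\cap G(L/L_2)$ and $G(L/L_1\cap L_2)=G(L/L_1)G(L/L_2)$, which hold by the Galois correspondence in the abelian group $G(L/K)$.

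The hard part will be the second paragraph: making rigorous the compatibility $(a,M/K)=(a,L/K)|_M$ and the resulting identification $\mathscr N_M/\mathscr N_L\cong G(L/M)$, since this is exactly where the functoriality of the reciprocity map — only indicated above by a single commutative diagram — must be used in full, in its top-field form. The other essential input is the reduction $N_{L/K}A_L=\mathscr N_{L^{ab}}$ of the first paragraph, which rests squarely on the precise index count $\#(A_K/\mathscr N_L)=\#G(L/K)^{ab}$ supplied by the reciprocity isomorphism; once both are in place the remaining steps are formal.
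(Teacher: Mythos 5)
Your proposal is correct and is essentially the standard argument: the paper itself offers no proof of this theorem, deferring (as announced at the start of Section 2) to \cite{NANT}, and your reduction $N_{L/K}A_L=\mathscr{N}_{L^{ab}}$ via the index count from $r_{L/K}$, followed by the identification of $\mathscr{N}_M$ as the preimage of $G(L/M)$ under $(\ ,L/K)$ using the top-field functoriality, is precisely the proof given there. No gaps; the one point you flag as delicate (the compatibility $(a,M/K)=(a,L/K)|_M$) is indeed the $K'=K$ case of the commutative diagram the paper records for the norm residue symbol.
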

			If $L$ is associated to the open subgroup $\mathscr{N}$ of $A_K$, then $L$ is called the \textit{class field} of $\mathscr{N}$. 
			\subsection{Local Class Field Theory}			
			We shall briefly recall results from $\cite{NANT}$ without proof. 
			
			Let $k_0=\mathbb{F}_p((T))$ or $\mathbb{Q}_p$ and $A=(k_0^{sep})^\times$. Thus $A_K^\times=K^\times$ for any finite extension $K/k_0$. Let $v$ be the usual normalised discrete valuation of $k_0$ and $\tilde{k}_0= k_0^{ur}$. Note that in the characteristic $p$ case $k_0^{ur}= \mathbb{F}_p^{sep}((T))$ and in the characteristic $0$ case $k_0^{ur}$ is obtained by adjoining all roots of unity coprime to $p$. Then there is a canonical isomorphism $deg:G(k_0^{ur}/k_0)\cong \hat{\mathbb{Z}}$. Let $K$ be a finite extension of $k_0$. Then one can prove the following:
			\begin{thm}
				The pair $(deg:G(k_0^{ur}/k_0)\rightarrow \hat{\mathbb{Z}},$ $v:k_0^\times\rightarrow \hat{\mathbb{Z}}$) is a class field theory and $A=(k_0^{sep})^\times$ satisfies the Class Field Theory Axiom. Therefore, for each Galois extension $L/K$ of local fields we obtain a canonical isomorphism
				$$r_{L/K}:G(L/K)^{ab}\rightarrow K^\times/N_{L/K} L^\times.$$ Also the Existence Theorem holds.
			\end{thm}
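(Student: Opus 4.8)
The statement packages three assertions: that $(\mathrm{deg},v)$ is a class field theory, that $A=(k_0^{sep})^\times$ satisfies the Class Field Theory Axiom, and that the reciprocity isomorphism together with the Existence Theorem follow. The last of these is where the abstract machinery does the work: once the first two are in place, the abstract reciprocity theorem stated above yields the isomorphism $r_{L/K}:G(L/K)^{ab}\to K^\times/N_{L/K} L^\times$ for every finite Galois $L/K$, and the abstract Existence Theorem (after its hypotheses are checked in this setting) supplies the classification. So the real content is the verification of the two axiomatic inputs.

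To see that $(\mathrm{deg},v)$ is a class field theory I would argue as follows. The map $\mathrm{deg}:G(k_0^{ur}/k_0)\to\hat{\mathbb{Z}}$ comes from the canonical identification of $G(k_0^{ur}/k_0)$ with the Galois group $G(\kappa^{sep}/\kappa)$ of the finite residue field $\kappa$, which is topologically generated by the Frobenius and hence canonically isomorphic to $\hat{\mathbb{Z}}$; this makes $\mathrm{deg}$ a continuous bijective homomorphism. For the henselian property of $v$, condition (i) is immediate because $v$ is the normalized discrete valuation, so $v(k_0^\times)=\mathbb{Z}=Z$ and $Z/nZ\cong\mathbb{Z}/n\mathbb{Z}$ for all $n$. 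Condition (ii), that $v(N_{K/k_0}K^\times)=f_K\mathbb{Z}$, I would deduce from the local norm formula $v(N_{K/k_0}(a))=f_{K/k_0}\,v_K(a)$, where $v_K$ is the normalized valuation of $K$ and $f_{K/k_0}$ is the residue degree; since $v_K(K^\times)=\mathbb{Z}$ and the residue degree equals $f_K=[K\cap k_0^{ur}:k_0]$ (the maximal unramified subextension), the image is exactly $f_K\mathbb{Z}$.

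The heart of the matter is the Class Field Theory Axiom for a finite cyclic $L/K$, where the local components are $A_K=K^\times$ and $A_L=L^\times$. Writing $G=G(L/K)$, the vanishing $\#H^{-1}(G,L^\times)=1$ follows from Hilbert 90: for a cyclic group Tate cohomology is $2$-periodic, so $H^{-1}(G,L^\times)\cong H^1(G,L^\times)=1$. For the identity $\#H^0(G,L^\times)=[L:K]$, note that since $\#H^{-1}=1$ this order equals the Herbrand quotient $h(G,L^\times)$. Applying Proposition \ref{herbrand} to the valuation sequence
\[ 1\longrightarrow O_L^\times\longrightarrow L^\times\xrightarrow{\ v_L\ }\mathbb{Z}\longrightarrow 0, \]
with $G$ acting trivially on $\mathbb{Z}$, gives $h(G,L^\times)=h(G,O_L^\times)\cdot h(G,\mathbb{Z})$, and a direct computation from the definitions yields $h(G,\mathbb{Z})=[L:K]$. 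The remaining identity $h(G,O_L^\times)=1$ is what I expect to be the main obstacle. I would prove it by the filtration method: the finite quotient $O_L^\times/U^{(1)}\cong\kappa_L^\times$ does not affect the Herbrand quotient (again by Proposition \ref{herbrand}), so $h(G,O_L^\times)=h(G,U^{(1)})$, and $U^{(1)}=1+\pi_L O_L$ is analyzed through the higher unit groups $U^{(i)}$, whose successive quotients $U^{(i)}/U^{(i+1)}$ are $G$-isomorphic to the additive group of the residue field $\kappa_L$. The needed cohomological triviality ultimately comes from the normal basis theorem. In characteristic $p$ the passage from the multiplicative to the additive side cannot use the $p$-adic exponential, so this reduction must be carried out directly along the filtration (or through Witt-vector coordinates), which is the technically delicate step.

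With both axioms verified, the abstract reciprocity theorem immediately produces the canonical isomorphism $r_{L/K}:G(L/K)^{ab}\to K^\times/N_{L/K}L^\times$ for every finite Galois $L/K$. For the Existence Theorem it remains to check the hypotheses of the abstract version in the present situation, which come down to showing that the norm groups $N_{L/K}L^\times$ are precisely the open subgroups of finite index in $K^\times$. The inclusion of norm groups among open finite-index subgroups is clear from continuity of the norm; for the converse I would reduce, using the lattice identities $\mathscr{N}_{L_1\cap L_2}=\mathscr{N}_{L_1}\cdot\mathscr{N}_{L_2}$ and $\mathscr{N}_{L_1L_2}=\mathscr{N}_{L_1}\cap\mathscr{N}_{L_2}$, to realizing the subgroups $(K^\times)^n$ as norm groups. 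Here Kummer theory (after adjoining $n$-th roots of unity) handles the part of $n$ prime to $p$, while the wild $p$-part, where Kummer theory is unavailable in characteristic $p$, must be handled by Artin--Schreier--Witt theory. Compactness of $O_K^\times$ then guarantees that the norm topology is well behaved and that the universal norm group is trivial, completing the correspondence. Besides the unit Herbrand computation, I anticipate that this Artin--Schreier--Witt construction of the wild part of the Existence Theorem is the point most specific to, and most delicate in, positive characteristic.
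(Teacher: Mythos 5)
Your proposal is correct and follows essentially the same route as the paper: the paper likewise reduces everything to the Class Field Theory Axiom, disposes of $H^{-1}$ by Hilbert 90, and uses the valuation exact sequence $0\to U_L\to L^\times\to\mathbb{Z}\to 0$ to reduce to the single statement $h(G(L/K),U_L)=1$ (Lemma \ref{hunit}), which it then cites from \S5 of \cite{NANT} rather than proving --- your filtration/normal-basis sketch is exactly the argument given there. The one point to watch in your Existence Theorem outline is that in characteristic $p$ the subgroup $(K^\times)^n$ has infinite index when $p\mid n$, so the correspondence must be stated for \emph{open} finite-index subgroups and the wild part handled by the Artin--Schreier--Witt construction you mention, rather than by literally realizing $(K^\times)^n$ as a norm group.
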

			Note that the only part which requires work is the proof of the class field theory axiom which can be found in $\cite{NCFT}$ and \cite{NANT}. In fact, by Hilbert 90, it suffices to calculate the Herbrand quotient only. By the exact sequence $$0\rightarrow U_L\rightarrow L^\times\overset{v_L}{\longrightarrow}\mathbb{Z}\rightarrow 0$$ it suffices to prove the following:
		
			\begin{lemma}\label{hunit}
				Let $L/K$ be a finite extension of local field and $U_L$ be unit group of $L$. Then we have $h(G(L/K),U_L)=1$. 
			\end{lemma}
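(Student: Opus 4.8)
The plan is to reduce the whole computation to a statement about the additive group and then invoke the Normal Basis Theorem. Since the Herbrand quotient is only defined for cyclic groups, and the Class Field Theory Axiom is formulated for cyclic extensions, I would work with $L/K$ finite cyclic and put $G=G(L/K)$, as this is the only case in which the lemma is applied. The basic tool is that the Herbrand quotient is insensitive to passage to a $G$-submodule of finite index: if $V\subseteq U_L$ is an open (hence finite-index) $G$-submodule, then $U_L/V$ is a finite $G$-module, so by Proposition \ref{herbrand} we have $h(G,U_L/V)=1$ and therefore $h(G,U_L)=h(G,V)$. Hence it suffices to compute $h(G,V)$ for a conveniently chosen open $V$.

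The second step is to transport the problem from the multiplicative group to the additive group. I would show that there is an open $G$-submodule $V$ of $U_L$ and an open $G$-submodule $W$ of the additive group $O_L$ with $V\cong W$ as $G$-modules; granting this, $h(G,U_L)=h(G,V)=h(G,W)=h(G,O_L)$, using finite-index invariance once more for the last equality. In the characteristic-zero local case this isomorphism is supplied by the $p$-adic logarithm: for $n$ large enough the series converges and gives a $G$-equivariant isomorphism $U_L^{(n)}=1+\mathfrak{p}_L^n\xrightarrow{\sim}\mathfrak{p}_L^n$ between a higher unit group and a fractional ideal, both open $G$-submodules (the $G$-equivariance is immediate from $\sigma(1+x)=1+\sigma x$).

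The third step is to prove $h(G,O_L)=1$. By the Normal Basis Theorem there is $\alpha\in L$ such that $\{\sigma\alpha:\sigma\in G\}$ is a $K$-basis of $L$, that is, $L$ is a free $K[G]$-module of rank one. Then $M=O_K[G]\,\alpha=\sum_{\sigma\in G}O_K\,\sigma\alpha$ is a free $O_K[G]$-module and a full $O_K$-lattice in $L$; after multiplying $\alpha$ by a sufficiently high power of a uniformiser we may assume $M\subseteq O_L$, so that $M$ is an open $G$-submodule of $O_L$. A free, equivalently induced, $G$-module $O_K[G]=\mathrm{Ind}_{\{1\}}^{G}O_K$ has trivial cohomology in every degree — Shapiro's lemma reduces this to the trivial subgroup — so $\#H^0(G,M)=\#H^{-1}(G,M)=1$ and hence $h(G,M)=1$. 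Finite-index invariance then yields $h(G,O_L)=1$, and combining the three steps gives $h(G,U_L)=1$.

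The hard part will be the multiplicative-to-additive comparison in the equal-characteristic case, which is precisely the situation relevant to this paper. There the logarithm no longer converges and cannot be used to identify a higher unit group with an additive submodule. Instead I would work with the filtration $U_L^{(1)}\supseteq U_L^{(2)}\supseteq\cdots$, whose successive quotients $U_L^{(i)}/U_L^{(i+1)}$ are $G$-isomorphic to the additive quotients $\mathfrak{p}_L^{i}/\mathfrak{p}_L^{i+1}$, and construct the required $G$-isomorphism $V\cong W$ on a deep enough layer, where $U_L^{(n)}$ is a torsion-free $\mathbb{Z}_p[G]$-module of the correct rank and the $p$-power map can be controlled directly. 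Establishing the $G$-equivariance of this identification — rather than merely matching ranks or graded pieces — is the delicate point, and it is here that the characteristic-$p$ argument genuinely departs from the classical one.
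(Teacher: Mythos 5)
The paper itself only cites \S5 of \cite{NANT} for this lemma, so the comparison is with the standard argument there. Your Steps 1 and 3 are fine: reduction of $h(G,U_L)$ to an open $G$-submodule via Proposition \ref{herbrand}, and $h(G,M)=1$ for the induced lattice $M=O_K[G]\alpha$ coming from a normal basis, are exactly the classical ingredients. The gap is in Step 2, and it is not merely the technical difficulty you flag at the end --- in equal characteristic the step is impossible as stated. For $K$ of characteristic $p$ one has $(1+x)^{p^n}=1+x^{p^n}$, so every nontrivial open subgroup $V$ of $U_L$ (which contains some $U_L^{(n)}$) is torsion-free as a $\mathbb{Z}_p$-module, whereas every additive subgroup $W$ of $O_L=F_L[[t]]$ is killed by $p$. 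Hence no group isomorphism $V\cong W$, $G$-equivariant or not, can exist on any ``deep enough layer''; only the graded pieces $U_L^{(i)}/U_L^{(i+1)}\cong\mathfrak{p}_L^i/\mathfrak{p}_L^{i+1}$ match, and $h$ is not multiplicative over an infinite filtration without further argument. Since the completions of a global function field are precisely the fields $\mathbb{F}_q((t))$, this is the only case the paper actually needs.

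The standard repair (Neukirch, \emph{ANT} V.1; Serre, \emph{Corps Locaux}) is to abandon the multiplicative-to-additive isomorphism altogether: take $M=O_K[G]\alpha\subset O_L$ and $N$ large enough that $V=1+\pi_K^N M$ is a subgroup of $U_L$; the filtration $V^{(i)}=1+\pi_K^{N+i}M$ has successive quotients isomorphic to $M/\pi_K M\cong (O_K/\pi_K)[G]$, which is induced and hence has trivial Tate cohomology; completeness of $V=\varprojlim V/V^{(i)}$ then gives $H^0(G,V)=H^{-1}(G,V)=1$ directly, so $h(G,V)=1$ and your Step 1 finishes the proof. This works uniformly in both characteristics, and in characteristic $0$ it recovers what your logarithm argument proves. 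You should replace Step 2 by this cohomological-triviality argument rather than trying to construct an isomorphism that does not exist.
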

			\begin{proof}
				See \S5 of \cite{NANT}.
			\end{proof}
			As before, the inverse of $r_{L/K}$ gives the local norm residue symbol
			$$(\text { },L/K):K^\times\rightarrow G(L/K)^{ab}$$ with kernel $N_{L/K}L^\times$.
			\newpage
				\section{Characteristic $p$ global class field theory}
				We shall prove characteristic $p$ global class field theory in the spirit of Neukirch's explicit approach.
				\subsection{Preliminary results for Global Function Fields}
				Here we recall some standard facts about function fields, most of which can be found in \cite{AFFC}. This section is for those who have little to no experience with characteristic $p$ global field but is familiar with number fields.
				
				\subsubsection{General Function field}
				We start with an algebraic definition of function field of one variable over an arbitrary field $F$.
				\begin{Def}
					Let $F$ be a field. An algebraic function field $K/F$ of one variable over $F$ is an extension field $K\supset F$ such that $K$ is a finite algebraic extensions of $F(x)$ for some element $x \in K$ which is transcendental over $F$. If $K=F(x)$ for some $x\in K$ then we call $K/F$ a rational function field.
				\end{Def}
				Clearly the set $F':=\{z \in K | z$ is algebraic over $F\}$ is a subfield of $K$. We call $F'$ the field of constants of $K/F$. It is clear that $K$ is a function field over $F'$. In this text we will always assume $F=F'.$
				\begin{Def}
					A  discrete valuation of $K/F$ is a map $v: K^\times\rightarrow \mathbb{R}$ satisfying the following conditions:
					
					$1.$ $v(yz)=v(y)+v(z)$ for all $y,z \in K$.\\
					$2.$ $v(y+z)\geq$ min$\{v(y)+v(z)\}$ for all $y,z \in K$.\\
					$3.$ $v(\alpha)=0$ for all $\alpha \in F$.\\
					$4.$ $v(K^\times)$ is a non-zero discrete subset of $\mathbb{R}$. 
				\end{Def}
				We can define an equivalence relation on the set of discrete valuations:
				$v \sim v'$ if there exists a constant $c>0$ such that $v=cv'$. An equivalent class of discrete valuations of $K/F$ is called a \textbf{Place} of $K/F$; the set of all places of $K$ is denoted by $\mathbb{P}_K$. We also say a discrete valuation $v$ is normalised if $v(K^\times)=\mathbb{Z}$. Note that for a discrete valuation $v$ of $K/F$, $v(K^\times)$ is a discrete subgroup of $(\mathbb{R},+)$ so $v(K^\times)=c\mathbb{Z}$ for some $c\in \mathbb{R}_+$. So clearly we can identify places of $K/F$ and normalised valuations of $K/F$. For each $P$ we write the corresponding valuation $v_P$. 
				
				For a place $P$ we set $O_P=\{z\in K:v_P(z)\geq 0\}$. This is a discrete valuation ring with a unique maximum ideal $m_P:=\{z\in K:v_P(z)\geq 1\}$. The quotient $O_P/m_P=:F_P$ is a finite extension of $F$. So we may define the Degree of a place by $$\mathrm{Deg}(P)=[F_P:F].$$ We say a place $P$ is rational if Deg$(P)=1$.
						
				\subsubsection{Global Function Field}
				Now we restrict our attention to the case of global function field. Thus $F=\mathbb{F}_{p^n}$ and $K$ is a function field over $F$. For each place $P$ let $K_P$ be a completion of $K$ with respect to the topology induced by the valuation $v_P$. Then $K_P$ is a local field of characteristic $p$. We note that the residue field of $K_P$ is $F_P$ and $K_P=F_P((t_P))$ where $t_P$ is a prime of $K_P$. We write $\mathcal{O}_P$ for the ring of integers of $K_P$ and $U_P=\mathcal{O}_P^\times$ for its units.
				
				\begin{Def}
					The divisor group of $K/F$ is defined as the (additively written) free abelian group which is generated by the places of $K/F$; it is denoted by $\mathrm{Div}(K)$. The elements of $\mathrm{Div}(K)$ are called divisors of $K/F$. In other words, a divisor is a formal sum
					$$D=\sum_{P\in \mathbb{P}_K} n_PP\quad \text{ with }$$ where $n_P\in \mathbb{Z}$ and almost all $n_P=0$. The support of $D$ is defined as
					$$\mathrm{supp}(D):=\{P\in\mathbb{P}_K:n_P\neq0\}.$$ It will often be found convenient to write
					$$D=\sum_{P\in S}n_PP.$$ where $S\in \mathbb{P}_F$ is a finite set with $S \supset \ \mathrm{supp}D$. A divisor of the form $D=P$ for some place $P$ is called a prime divisor.	\\The degree of a divisor is defined as $$\mathrm{Deg}D:=\sum_{P\in\mathbb{P}_K}v_P(D)\mathrm{Deg}P,$$ and this yields a homomorphism $\mathrm{Deg}:\mathrm{Div}(K)\rightarrow\mathbb{Z}$.
				\end{Def}
				For each $x\in K^\times$ we can define 
				$(x):=\sum_P v_P(x)P$. $v_P(x)$ is zero for almost all $P$ so $(x)$ is a divisor. Also it can be proved that Deg$((x))=:$ Deg$(x)=0$ for all $x\in K$. Thus we can define the following:
				\begin{Def}
					The set of divisors
					$$\mathrm{Princ}(K):=\{(x):0\neq x \in K\}$$ is called the group of principal divisors of $F/K$. This is a subgroup of $\mathrm{Div}(K)$, since for $x,y\in K^\times$, $(xy)=(x)+(y)$. The factor group
					$$Cl(K):=\mathrm{Princ}(K)/\mathrm{Div}(K)$$ is called the divisor class group of $K/F$.
				\end{Def}
				By the remark above, Deg$:\mathrm{Div}(K)\rightarrow \mathbb{Z}$ induces Deg$:Cl(K)\rightarrow \mathbb{Z}$.
				
				\begin{Prop}\label{divisor class}
					Let $Cl^0(K)$ be a subgroup of $Cl(K)$ which consists of element which has degree zero. Then $Cl^0(K)$ is a finite group.
				\end{Prop}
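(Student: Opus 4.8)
The plan is to deduce finiteness from a counting argument: I fix a large degree $n$ and compare the (finite) number of effective divisors of degree $n$ with the number of degree-$n$ divisor classes, the latter being equal to $h:=\#Cl^0(K)$. Write $q=\#F$ and let $g$ denote the genus of $K/F$. The first ingredient I need is that for every $d\geq 1$ there are only finitely many places of $K/F$ of degree $d$; this is standard, since each place of $K$ lies over a place of the rational subfield $F(x)$, of which there are finitely many of bounded degree, and only finitely many places of $K$ lie over each of them. It follows at once that for every $m$ the set of effective divisors of degree $m$ is finite, since any such divisor is supported on the finitely many places of degree $\leq m$ and has bounded coefficients. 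I denote by $A_n$ this finite number of effective divisors of degree $n$.

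The second ingredient counts effective divisors inside a single linear equivalence class. For a divisor $D$ I set $L(D)=\{x\in K^\times:(x)+D\geq 0\}\cup\{0\}$, a finite-dimensional $F$-vector space of dimension $\ell(D)$, depending only on the class of $D$. The effective divisors linearly equivalent to $D$ are precisely those of the form $D+(x)$ with $0\neq x\in L(D)$, and $D+(x)=D+(y)$ holds if and only if $x/y$ has trivial divisor, i.e. $x/y\in F^\times$ by our standing hypothesis that $F$ is the full field of constants. Hence the class of $D$ contains exactly $(q^{\ell(D)}-1)/(q-1)$ effective divisors. The third ingredient is the Riemann--Roch theorem (see \cite{AFFC}), which gives $\ell(D)=\deg D+1-g$ as soon as $\deg D>2g-2$; in particular $\ell$ takes the constant value $n+1-g$ on the set of divisor classes of any fixed degree $n>2g-2$.

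Combining these, I choose $n>2g-2$ lying in the image of $\mathrm{Deg}$, which is possible since effective divisors of arbitrarily large degree exist. Fixing one class $c_0$ of degree $n$, the translation $c\mapsto c-c_0$ is a bijection from the set of degree-$n$ classes onto $Cl^0(K)$, so there are exactly $h$ classes of degree $n$. Summing the second ingredient over these classes and inserting the third yields
$$A_n=\sum_{\deg c=n}\frac{q^{\ell(c)}-1}{q-1}=h\cdot\frac{q^{\,n+1-g}-1}{q-1}.$$
Since $A_n$ is finite and $(q^{\,n+1-g}-1)/(q-1)$ is a positive integer, I conclude $h=\#Cl^0(K)<\infty$. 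The genuine content sits in the two standard inputs: Riemann--Roch, which forces $\ell$ to be constant across the classes of a fixed large degree, and the finiteness of the places of each degree, which makes $A_n$ finite. Granting these, the finiteness of $Cl^0(K)$ reduces to the short count above, and the only step demanding care is the identification of the effective divisors in a class with the projective space of $L(D)$ — precisely the point where the assumption $F=F'$ is used.
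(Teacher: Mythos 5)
Your proof is correct: the three ingredients (finiteness of the set of effective divisors of a fixed degree, the count $(q^{\ell(D)}-1)/(q-1)$ of effective divisors in a fixed class using $F=F'$, and Riemann--Roch to evaluate $\ell$ for $\deg D>2g-2$) fit together exactly as you describe, and your choice of $n$ large guarantees the per-class count is a positive integer, so $h\leq A_n<\infty$. The paper offers no argument of its own here, deferring entirely to \cite[\S5]{AFFC}, and what you have written is precisely the standard proof given in that reference, so you have simply supplied the details the paper omits.
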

				\begin{proof}
					This standard result can be found in many texts, including \cite[\S5]{AFFC}.
				\end{proof}
				\subsubsection{Useful results on Global Function fields}
				Here we state a few results essential for the proof of class field theory axiom. Most of it is taken from $\S13,\S17$ of \cite{Artin}. Results which we directly need will be stated without proof. 
				
				Firstly we define the derivative on a local field (of characteristic $p$) in the following way: let $t_P$ be a prime element of a local field $K_P$ and $x=\sum_{i=m}^\infty a_i t_P^i$. Then we define $\frac{dx}{dt_P}=\sum_{i=m}^\infty ia_{i}t_P^{i-1}$. Given such expansion we define a function $a_i^{t_P}:K_P^\times\rightarrow F_P$ by $a_i^{t_P}(x)=a_i$. Clearly this depends on the choice of the prime element $t_P$. Note if $t_P'$ is another prime then 
				$$ \frac{dy}{dt_P}=\frac{dy}{dt_P'}\frac{dt_P'}{dt_P} $$
				Then in general,  for arbitrary $x,y \in K_P$ where $\frac{dy}{dt_P}\neq 0$,  we can define $\frac{dx}{dy}$, by the formula
				$\frac{dx}{dy}=\frac{dx}{dt}/\frac{dy}{dt}.$ This is independent of the choice of the prime $t_P$ by the above `chain rule'. 
				
				We also define another map $res$ on a local field as following:
				Let $xdy$ be a local differential. Then we define residue of a differential form
				$$res^{t_P}(xdy):=a_{-1}^{t_P}(x\frac{dy}{dt_P}).$$ However this function does not depend on the choice of prime $t_P$ so we may just write $res$ instead of $res^{t_P}$. We may also write $res_P$ for the residue function on $K_P$.
				\begin{Def}
					Let $x\in K$ which is transcendental over $F$. We say $x$ is a separating element if $K/F(x)$ is a finite separable extension.
				\end{Def}
				Now we state a Lemma which allows us to describe a global function field in a neater way:
				\begin{lemma}\label{sep}
						For each place $P$, there exists a separating element $t\in K$ such that $t$ is a prime of $K_P$.
				\end{lemma}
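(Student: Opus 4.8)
The plan is to split the statement into its two halves and do almost all the work on one of them. Since the place $P$ corresponds to the normalised valuation $v_P$ with value group $v_P(K^\times)=\mathbb{Z}$, the homomorphism $v_P\colon K^\times\to\mathbb{Z}$ is surjective, so I can simply pick $t\in K$ with $v_P(t)=1$. Because the valuation of the completion $K_P$ restricts to $v_P$ on $K$, this $t$ also satisfies $v_P(t)=1$ in $K_P$ and is therefore a prime element of $K_P$. This settles the ``prime of $K_P$'' requirement with no work, and the entire content of the lemma becomes the assertion that such a $t$ is automatically a separating element.

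The decisive elementary observation is that every $p$-th power in $K$ has valuation divisible by $p$ at \emph{every} place: if $t=a^p$ with $a\in K$, then $v_P(t)=p\,v_P(a)\in p\mathbb{Z}$. Since I arranged $v_P(t)=1$, and $\gcd(1,p)=1$, it follows at once that $t\notin K^p$. Moreover $v_P(t)=1\neq 0$ forces $t\notin F$, so (as $F=F'$ is the full field of constants) $t$ is transcendental over $F$; hence $K/F(t)$ is a finite extension and the notion of a separating element genuinely applies to $t$.

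It then remains to convert $t\notin K^p$ into separability of $K/F(t)$. Here I would invoke two standard structural facts about a function field of one variable over the perfect field $F=\mathbb{F}_{p^n}$: first, that $[K:K^p]=p$; and second, that for transcendental $t$ the extension $K/F(t)$ is separable if and only if $K=K^pF(t)$. Granting these, the argument is a one-line degree count along the tower $K^p\subseteq K^pF(t)\subseteq K$: since $[K:K^p]=p$ is prime, the intermediate field is either $K^p$ or $K$, and it cannot be $K^p$, for that would give $t\in K^p$, contradicting the previous paragraph. Hence $K^pF(t)=K$, so $K/F(t)$ is separable and $t$ is a separating element.

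The main obstacle is therefore not the valuation-theoretic part but the justification of those two structural facts, both of which rest essentially on $F$ being perfect (so that $F\subseteq K^p$ and $F(t)^p=F(t^p)$). Equivalently one could route the separability step through differentials: for perfect $F$ the kernel of $d\colon K\to\Omega_{K/F}$ is exactly $K^p$, and $t$ is separating precisely when $dt\neq 0$, so $t\notin K^p$ yields $dt\neq 0$ immediately. I would present the degree-count version, since it is self-contained and avoids importing the global differential module, but either route reduces the whole lemma to the single coprimality observation $\gcd(v_P(t),p)=1$.
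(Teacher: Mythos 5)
Your proof is correct, but it takes a genuinely different route from the paper. The paper starts from a known separating element $x$ (whose existence is F.~K.~Schmidt's theorem for function fields over perfect constant fields), normalises it so that $v_P(x)=0$, takes an arbitrary prime $\tau$ of $P$ in $K$, and repairs it to $t=\tau x$ when $d\tau/dx=0$, using the derivative criterion ``$t$ is separating iff $dt/dx\neq 0$'' from \cite[\S4]{AFFC}. You instead pick any $t\in K$ with $v_P(t)=1$ and show it is \emph{automatically} separating: $v_P(t)=1$ is prime to $p$, so $t\notin K^p$, and then the degree count along $K^p\subseteq K^pF(t)\subseteq K$ with $[K:K^p]=p$ together with MacLane's criterion ($K/F(t)$ separable iff $K=K^pF(t)$) forces $K^pF(t)=K$. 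Both structural inputs are standard and do rest on $F$ being perfect, as you say; your second fact is valid for any finite extension in characteristic $p$, and the composite statement ``$t$ separating iff $t\notin K^p$'' is itself in \cite[\S3.10]{AFFC}, so you could shorten further by citing it. What your approach buys is twofold: it avoids invoking the prior existence of a separating element and the calculus of derivations, and it proves the stronger statement that \emph{every} prime of $P$ lying in $K$ is separating --- which incidentally shows that the case $d\tau/dx=0$ in the paper's proof never actually occurs. What the paper's argument buys is that it works verbatim from the derivative formalism already set up for the residue pairing later in the section.
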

				
				\begin{proof}
					Let $x$ be a separating element. By replacing $x$ by $1/x$, $x+1$ or $1/x+1$, we can ensure that $v_P(x)=0$. Let $\tau\in k$ be any prime of $P$. If $\tau$ is a separating element then we are done. If not then $d\tau/dx=0$. Set $t=\tau x$. Then $t$ is also a prime of $P$ and a separating element as
					$$\frac{dt}{dx}=\tau\neq0$$ by \cite[\S4]{AFFC}.
				\end{proof}
				Then we get the following result.
				\begin{lemma}\label{dt/dp}
					Let $t$ be a separating element for $K/F$ which also is a prime for a place $Q$. For each $P\in \mathbb{P}_K$, let $t_P\in K_P$ be a prime. Then $\frac{dt}{dt_P}$ is in $U_P$ for almost all $P$. Consequently we have a well defined element $(\frac{dt}{dt_P})$ of $I_K$.
				\end{lemma}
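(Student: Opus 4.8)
The plan is to interpret $\frac{dt}{dt_P}$ as the local order of the global differential $dt$ and to show this order is trivial outside a finite set of places by comparing $K$ with its rational subfield $F(t)$. First I would note that whether $\frac{dt}{dt_P}\in U_P$ is independent of the chosen prime $t_P$: if $t_P'$ is a second prime and we expand $t_P=\sum_{i\geq 1}a_i(t_P')^i$, then $v_P(t_P)=1$ forces $a_1\neq 0$, so $\frac{dt_P}{dt_P'}=a_1+2a_2t_P'+\cdots$ has zero valuation; by the chain rule $\frac{dt}{dt_P'}=\frac{dt}{dt_P}\cdot\frac{dt_P}{dt_P'}$, the two candidates differ by a unit of $U_P$. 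Thus at each place I may compute with whichever prime is convenient. Moreover, since $t$ is separating the differential $dt$ is nonzero, so $\frac{dt}{dt_P}\in K_P^\times$ for every $P$; the content of the lemma is that it lands in $U_P$ for almost all $P$.

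Because $t$ is a separating element, $K/F(t)$ is a finite separable extension, and every $P\in\mathbb P_K$ lies above a unique place $\mathfrak p$ of the rational function field $F(t)$. The places of $F(t)$ are the finite ones, given by monic irreducible polynomials $q(t)\in F[t]$ with uniformizer $q(t)$, together with the single infinite place, with uniformizer $1/t$. I would discard two finite collections of places: the places $P$ lying above the infinite place of $F(t)$, of which there are at most $[K:F(t)]$; and the places $P$ ramified over $F(t)$, namely those dividing the different $\mathfrak d_{K/F(t)}$, which has finite support precisely because $K/F(t)$ is separable. The remaining places form a cofinite subset of $\mathbb P_K$.

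For such a remaining $P$, lying above a finite place $\mathfrak p=(q)$ with $e(P|\mathfrak p)=1$, we have $v_P(q(t))=e(P|\mathfrak p)\,v_{\mathfrak p}(q(t))=1$, so $q(t)$ is a prime of $K_P$; take $t_P=q(t)$. The operator $\frac{d}{dt_P}$ is a derivation of $K_P$ annihilating $F_P$, so for the polynomial $q$ with coefficients in $F\subseteq F_P$ one has $\frac{d(q(t))}{dt_P}=q'(t)\,\frac{dt}{dt_P}$; the left-hand side is $\frac{dt_P}{dt_P}=1$, whence $\frac{dt}{dt_P}=q'(t)^{-1}$. Finally, $F_{\mathfrak p}=F[t]/(q(t))$ is separable over the finite field $F$, so $q$ is a separable polynomial, $\gcd(q,q')=1$, and therefore $v_{\mathfrak p}(q'(t))=0$, giving $v_P(q'(t))=0$ and $\frac{dt}{dt_P}=q'(t)^{-1}\in U_P$.

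This shows $\frac{dt}{dt_P}\in U_P$ for all but finitely many $P$, so the family $\big(\frac{dt}{dt_P}\big)_P$ is a well-defined element of $I_K$, as claimed. The main obstacle is not the local computation in the unramified case, which is elementary, but the finiteness asserted in the second step: that only finitely many places of $K$ ramify over $F(t)$. This is exactly the finiteness of the different $\mathfrak d_{K/F(t)}$, which relies on the separability of $K/F(t)$ furnished by Lemma~\ref{sep}; the same perfectness of the constant field $F$ that makes all residue extensions separable is also what makes the final unramified step succeed. Note that the hypothesis that $t$ be a prime for some place $Q$ is not needed here, only that $t$ is separating.
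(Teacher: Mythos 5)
Your argument is correct, and it is worth noting that the paper does not actually prove this lemma: its ``proof'' consists of the remark that the rational function field case can be checked by direct calculation and a reference to \S17 of Artin's notes for the general case. Your write-up therefore supplies a genuine argument where the paper gives a citation. The route you take --- observe first that membership of $\frac{dt}{dt_P}$ in $U_P$ is independent of the chosen prime (a point that genuinely needs saying, and which you handle correctly via the chain rule), then pass to the rational subfield $F(t)$, discard the finitely many places over the infinite place and the finitely many places ramified in the separable extension $K/F(t)$, and at each remaining place take $t_P=q(t)$ to get the explicit value $\frac{dt}{dt_P}=q'(t)^{-1}\in U_P$ --- is essentially the standard computation underlying Artin's treatment, extended from the rational case the paper alludes to. The ingredients you use without proof (finiteness of the ramification locus of a finite separable extension, and nonvanishing of $\frac{dt}{dt_P}$ at every place because $t$ is separating, which is what makes the tuple an idele rather than merely an adele) are standard and are cited at no lower a level of detail than the paper itself employs. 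Your closing observation that only the separating property of $t$, and not the hypothesis that $t$ is a prime at some place $Q$, is needed for this particular lemma is also accurate.
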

				\begin{proof}
					For a rational function field, this can be verified with direct calculation. For a general function field it follows from \cite[\S17]{Artin}.					
				\end{proof}	
				
				Let $\mathbb{A}_K:=\prod_{P}'K_P$ be the ring of Adeles of $K$. We define $Lin_{cts}^*(K)$ be space of continuous $F$-linear map $\mu$ of the ring $\mathbb{A}_K$ into $F$ such that $\mu(a)=0$ for $a\in K$. Then $Lin_{cts}^*(K)$ forms a vector space over $K$.  Note that in the original text (\cite{Artin}) the space $Lin_{cts}^*(K)$ is called differential but in this paper we choose not to use this notation since it could be confused with derivative and differential forms.
				
				Now we state an important property of $Lin_{cts}^*(K)$.
				\begin{Prop}\label{dc}
				The space $Lin_{cts}^*(K)$ forms a one-dimensional vector space over $K$. Also if $\mu(\alpha)=0$ for all $\mu\in Lin_{cts}^*(K)$, then $\alpha \in K$.
				\end{Prop}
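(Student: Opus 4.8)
The plan is to write down one explicit nonzero element of $Lin_{cts}^*(K)$ coming from residues, record the $K$-module structure, and then deduce the two assertions separately: one-dimensionality by a Riemann--Roch dimension count, and the separation property from the non-degeneracy of the local residue pairing. First I would equip $Lin_{cts}^*(K)$ with its $K$-action by setting $(y\mu)(a):=\mu(ya)$ for $y\in K$; since $\mathbb{A}_K$ is a ring, $ya\in\mathbb{A}_K$, multiplication by $y$ is continuous, and $ya\in K$ whenever $a\in K$, so $y\mu$ again lies in $Lin_{cts}^*(K)$ and this makes it a $K$-vector space. To produce a distinguished element I would fix, via Lemma \ref{sep}, a separating element $t$ of $K/F$ and define $\mu_t\big((a_P)_P\big):=\sum_P res_P(a_P\,dt)$. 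Lemma \ref{dt/dp} gives $\frac{dt}{dt_P}\in U_P$ for almost all $P$, so for an adele $a$ one has $a_P\in\mathcal{O}_P$ and $res_P(a_P\,dt)=0$ for all but finitely many $P$; hence the sum is finite and $\mu_t$ is a well-defined, continuous (it vanishes on the open subgroup cut out by the divisor of $dt$), $F$-linear map. The residue theorem $\sum_P res_P(x\,dt)=0$ for $x\in K$ (\cite[\S17]{Artin}) shows $\mu_t$ vanishes on $K$, so $\mu_t\in Lin_{cts}^*(K)$, and $\mu_t\neq 0$ because local non-degeneracy lets me choose $a$ supported at a single place with $res_Q(a_Q\,dt)=1$. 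Under the $K$-action, $y\mu_t$ corresponds to the differential $y\,dt$.

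For the upper bound on the dimension, suppose for contradiction that $\mu_1,\mu_2\in Lin_{cts}^*(K)$ are $K$-linearly independent. For a divisor $D=\sum d_P P$ set $L(D)=\{x\in K^\times:(x)\ge -D\}\cup\{0\}$ and $\ell(D)=\dim_F L(D)$; the $F$-linear map $L(D)\oplus L(D)\to Lin_{cts}^*(K)$, $(f,g)\mapsto f\mu_1+g\mu_2$, is injective by independence, so its image has $F$-dimension $2\ell(D)$. On the other hand, if $\mu_1,\mu_2$ vanish on $\mathbb{A}_K(B):=\{a\in\mathbb{A}_K:v_P(a_P)\ge -b_P\text{ for all }P\}$ for some $B=\sum b_P P$, then every $f\mu_i$ with $f\in L(D)$ vanishes on $\mathbb{A}_K(B-D)$ and so factors through $\mathbb{A}_K/(\mathbb{A}_K(B-D)+K)$, whose $F$-dimension is the index of specialty $i(B-D)$. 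Riemann--Roch (\cite[\S17]{Artin}) gives $i(A)=\ell(A)-\mathrm{Deg}(A)-1+g$ with $g$ the genus of $K/F$, so for $\mathrm{Deg}\,D$ large $i(B-D)$ grows like $\mathrm{Deg}\,D$ while $2\ell(D)$ grows like $2\,\mathrm{Deg}\,D$, forcing the impossible $2\,\mathrm{Deg}\,D+O(1)\le\mathrm{Deg}\,D+O(1)$. Hence $\dim_K Lin_{cts}^*(K)\le 1$, and together with $\mu_t\neq0$ this yields $Lin_{cts}^*(K)=K\mu_t$. I expect this dimension count --- effectively the full force of Riemann--Roch for $K/F$ --- to be the main obstacle, and I would lean on \cite{Artin} for it.

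Finally, for the separation statement, suppose $\alpha\in\mathbb{A}_K$ satisfies $\mu(\alpha)=0$ for every $\mu\in Lin_{cts}^*(K)$; equivalently $\mu_t(y\alpha)=0$ for all $y\in K$. Then the functional $\alpha\mu_t:\beta\mapsto\mu_t(\alpha\beta)$ is continuous, $F$-linear, and vanishes on $K$ by hypothesis, so $\alpha\mu_t\in Lin_{cts}^*(K)=K\mu_t$; write $\alpha\mu_t=c\,\mu_t$ with $c\in K$. Thus $(\alpha-c)\mu_t$ is the zero functional on all of $\mathbb{A}_K$. If $\alpha-c\neq0$, it has a nonzero component $(\alpha-c)_Q$ at some place $Q$, and by non-degeneracy of the local pairing $(x,y)\mapsto res_Q(xy\,dt)$ on $K_Q$ I can choose $\beta$ supported at $Q$ with $res_Q\big((\alpha-c)_Q\beta_Q\,dt\big)\neq0$, giving $(\alpha-c)\mu_t(\beta)\neq0$, a contradiction. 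Therefore $\alpha=c\in K$, which is exactly the asserted separation property.
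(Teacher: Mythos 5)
Your argument is correct in outline and is essentially the standard proof from Artin's lectures, which is all the paper itself offers here (its ``proof'' is just the citation \cite[\S13]{Artin}): construct the residue functional attached to a separating element, bound the dimension over $K$ by playing $\ell(D)$ against the index of specialty via Riemann--Roch, and deduce the separation statement from one-dimensionality plus local non-degeneracy of the residue pairing. The one technical slip is in your definition $\mu_t\bigl((a_P)_P\bigr)=\sum_P res_P(a_P\,dt)$: the local residue $res_P(a_P\,dt)$ lies in the residue field $F_P$, which is in general a proper extension of $F$, so as written the summands live in different fields and $\mu_t$ does not map into $F$; you must insert the traces and set $\mu_t(a)=\sum_P Tr_{F_P/F}\bigl(res_P(a_P\,dt)\bigr)$, exactly as the paper does for its map $\lambda$. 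This is also needed for the residue theorem you invoke (which in the non-algebraically-closed constant field case reads $\sum_P Tr_{F_P/F}(res_P(x\,dt))=0$) and, in your final step, the relevant local pairing is $(x,y)\mapsto Tr_{F_Q/F}(res_Q(xy\,dt))$, which remains non-degenerate because the trace form of the separable extension $F_Q/F$ is non-degenerate. With that correction every step goes through: continuity does give vanishing on some parallelotope $\mathbb{A}_K(B)$, the injectivity of $(f,g)\mapsto f\mu_1+g\mu_2$ on $L(D)\oplus L(D)$ follows from $K$-linear independence, and the count $2\ell(D)\le i(B-D)$ is contradictory for $\mathrm{Deg}\,D$ large.
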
	
				\begin{proof}
					See \cite[\S13]{Artin}.
				\end{proof}
				Let $Q$ be a place and let $t$ be a separable element which is also a prime for $Q$. For each place $P$ let $Tr_P$ be the trace map between $F_P$ and $F$. Then we define a map $\lambda:\mathbb{A}_k\rightarrow F$ given by
				$$\lambda(\alpha)=\sum_PTr_P(res_P(\alpha_Pdt)).$$ This map is indeed in $Lin_{cts}^*(K)$; it is clearly additive and $k_0$ homogeneous. If $|\alpha_{P}|_{P}$ is small for all places where $t$ has poles and if $\alpha_P$ is a unit at all other places, then $\lambda(\alpha)=0$, since each residue is $0$. This shows continuity of $\lambda$. Vanishing at $K$ requires some work which we leave readers to check \cite{Artin} $\S17$.
				
				As the action of $K$ on $\lambda \in Lin_{cts}^*(K)$ is given by $x\cdot\lambda(\alpha)=\lambda(x\alpha)$, Proposition $\ref{dc}$ implies that if $\lambda(x\alpha)=0$ for all $x\in K$, then $\alpha\in K.$
			
				%\begin{thm}
				%Let $\mu$ be a differential of $k$. Then we define $\mu_P(\alpha)$ to mean $\mu(\sigma_P)$. Then $$\mu(\alpha)=\sum_P\mu_P(\sigma)$$.
				%\end{thm}
				%	Again check $\cite{Artin}$ for proof	
				%\begin{cor}
				%	Let $\mu$ and $\mu'$ be differentials. Then $\mu_1=\mu_2$ if and only if $\mu_P=\mu'_P$ for one place $P$.
				%\end{cor}
				\subsection{Idele Class group}
				Let $k_0= \mathbb{F}_p(T)$ and let $K$ be a finite extension of $k_0$. Let $q$ be the size of the constant field of $K$. Then we define $I_K$, group of Ideles over $K$ to be
				$\prod_P'K_P^\times$, the restricted product of the multiplicative group of the completion of $K$ at each place $P$ with respect to their units $U_P$. $K^\times$ is naturally diagonally embedded into $I_K$ and therefore we can define $C_K=I_K/K^\times$, the Idele class group. It is well known that Idele class group for global fields satisfy Galois descent condition (For $L/K$ a Galois extension, $C_L^{G(L/K)}=C_K)$. 
				
				We define an Idelic norm $|\cdot|_K:I_K\rightarrow \mathbb{R}_+$ by $$|a|_K=\prod_P q_P^{v_P(a)}$$ where $q_P=q^{deg(P)}$, in another words, the size of the residue field of $K_P$. This is a homomorphism where the image is $q^\mathbb{Z}\subset \mathbb{R}_+$. We denote the kernel of this map by $I_K^0$. By the product formula $K^\times \in I_K^0$ and therefore we can define subgroup $C_K^0$ of $C_K$ by $I_K^0/K^\times.$ In the exactly same way as the number field case it can be shown that $C_K^0$ is compact.
				\begin{Prop}\label{Decomp}
					$C_K = C_K^0 \times \Gamma_K$ where $\Gamma_K \cong \mathbb{Z}$. 
				\end{Prop}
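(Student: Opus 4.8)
The plan is to realise the decomposition as the splitting of a short exact sequence whose cokernel is free. First I would record what the idelic norm already provides. By construction $|\cdot|_K \colon I_K \to \mathbb{R}_+$ is a homomorphism with image $q^{\mathbb{Z}}$ and kernel $I_K^0$, and by the product formula $K^\times \subset I_K^0$; hence $|\cdot|_K$ factors through $C_K = I_K/K^\times$ to give a homomorphism $C_K \to \mathbb{R}_+$ with image $q^{\mathbb{Z}}$ and kernel $C_K^0 = I_K^0/K^\times$. This yields the short exact sequence of abelian groups
$$1 \longrightarrow C_K^0 \longrightarrow C_K \xrightarrow{\;|\cdot|_K\;} q^{\mathbb{Z}} \longrightarrow 1.$$

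Next I would split this sequence. Since $q^{\mathbb{Z}} \cong \mathbb{Z}$ is free abelian, it is projective, so the sequence splits and $C_K \cong C_K^0 \times \mathbb{Z}$ abstractly. To make the splitting explicit and to name $\Gamma_K$, I would choose an idele $\alpha \in I_K$ with $|\alpha|_K = q$; such an $\alpha$ exists precisely because the image of the norm is all of $q^{\mathbb{Z}}$. Writing $\bar\alpha$ for its class in $C_K$, I set $\Gamma_K = \langle \bar\alpha \rangle$.

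It then remains to verify the three defining properties of an internal direct product. Because $|\bar\alpha^{\,n}|_K = q^{\,n} \neq 1$ for every $n \neq 0$, the class $\bar\alpha$ has infinite order, so $\Gamma_K \cong \mathbb{Z}$, and moreover $\Gamma_K \cap C_K^0 = \{1\}$ since no nontrivial power of $\bar\alpha$ lies in the kernel of the norm. Finally, given any $\bar x \in C_K$ with $|\bar x|_K = q^{\,n}$, the element $\bar x\,\bar\alpha^{-n}$ has norm $1$ and hence lies in $C_K^0$; thus $\bar x \in C_K^0 \cdot \Gamma_K$, so $C_K = C_K^0 \cdot \Gamma_K$. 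As $C_K$ is abelian both factors are normal, and together with the trivial intersection this gives $C_K = C_K^0 \times \Gamma_K$ with $\Gamma_K \cong \mathbb{Z}$, as required.

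The main (and essentially only) subtlety is the surjectivity of the norm onto $q^{\mathbb{Z}}$, i.e. that the generator of the image is $q$ rather than some $q^{\,d}$ with $d>1$; this amounts to the degrees of the places of $K$ having greatest common divisor $1$ (a theorem of F.~K.~Schmidt), and it is already built into the stated image $q^{\mathbb{Z}}$ of $|\cdot|_K$. Once surjectivity is granted, everything else is formal: the decomposition is just the splitting of a short exact sequence with free cokernel, and, in contrast to the analysis of $C_K^0$ itself, no appeal to the compactness of $C_K^0$ is needed here.
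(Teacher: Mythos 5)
Your proof is correct, and the underlying mechanism is the same as the paper's: both split the exact sequence $1 \to C_K^0 \to C_K \to q^{\mathbb{Z}} \to 1$ induced by the idelic norm by exhibiting a class of norm $q$ and letting $\Gamma_K$ be the infinite cyclic group it generates. The difference is in how that class is produced. The paper takes $\Gamma_K$ to be generated by $[t_P]_P$ for a prime element $t_P$ at a \emph{rational} place $P$, so that $|[t_P]_P|_K = q^{\mathrm{Deg}(P)} = q$; you instead take an arbitrary idele of norm exactly $q$, whose existence is precisely the statement that the image of $|\cdot|_K$ is all of $q^{\mathbb{Z}}$, i.e.\ F.~K.~Schmidt's theorem that the degrees of the places of $K$ have greatest common divisor $1$. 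Your choice is the more robust one: a global function field need not possess a rational place (curves over small finite fields with no degree-one point exist once the genus is large enough), so the paper's generator may fail to exist, whereas an idele of norm $q$ always does; in that sense your argument actually closes a small gap in the paper's own proof. The one trade-off is that the paper reuses its concrete generator later, in the proof that $v_K$ has image $\mathbb{Z}$, where the identity $\varphi_{K_P} = \varphi_K$ for a rational $P$ is invoked; adopting your generator would require reworking that later computation. As a proof of the Proposition itself, your argument is complete.
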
	For the proof of this Proposition we introduce a new map. For every place $P$ of $k$ we have a canonical injection
				$$[\text{ }]_P:K_P^\times\rightarrow C_K$$ which maps $a_P\in K_P^\times$  to the class of the Idele
				$[a_P]_P=(\cdots,1,1,1,a_P,1,1,1,\cdots).$ 
				\begin{proof}
					Let $P$ be a rational place of $K$ and let $t_P$ be a prime element of $K_P$. Let $\Gamma_K$ be the image of the subgroup generated by $[t_P]_P$ in $C_K$. Then clearly $\Gamma_K \cong \mathbb{Z}$. By construction $\Gamma_K$ is mapped isomorphically onto $q^\mathbb{Z}\subset \mathbb{R}_+$ by the Idelic norm and hence we obtain $C_K=C_K^0\times \Gamma_K$.
				\end{proof}		
				\begin{Prop}\label{norm}
					Let $L/K$ be a finite extension and $\alpha \in I_L$, then the local components of $N_{L/K}(\alpha)$ is given by
					$$N_{L/K}(\alpha)_P=\prod_{\mathfrak{P}|P}N_{L_\mathfrak{P}/K_P}(\alpha_\mathfrak{P}).$$
				\end{Prop}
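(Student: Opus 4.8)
The plan is to deduce the formula from the semilocal decomposition already established in this section, treating the Galois case first and then reducing to it. Fix a place $P$ of $K$ and let $\mathfrak{P}_1,\dots,\mathfrak{P}_r$ be the places of $L$ above $P$. The ``$P$-part'' of the idele group, $A:=\prod_{\mathfrak{P}\mid P}L_\mathfrak{P}^\times$, is exactly the situation treated in the semilocal theory: when $L/K$ is Galois, $G=G(L/K)$ permutes the factors $L_{\mathfrak{P}_i}^\times$ transitively, and the decomposition group $G_{\mathfrak{P}_1}=\{\sigma\in G:\sigma\mathfrak{P}_1=\mathfrak{P}_1\}$ is canonically isomorphic to $G(L_{\mathfrak{P}_1}/K_P)$. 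Thus the local component of $(G,A)$ is $\bigl(G(L_{\mathfrak{P}_1}/K_P),\,L_{\mathfrak{P}_1}^\times\bigr)$, and the embedding of $K_P^\times$ as the $G$-fixed part of $A$ matches the inclusion $I_K\hookrightarrow I_L$ at $P$.

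Choosing coset representatives $\sigma_1=1,\sigma_2,\dots,\sigma_r$ of $G/G_{\mathfrak{P}_1}$ with $\sigma_i\mathfrak{P}_1=\mathfrak{P}_i$, I would write each local component as $\alpha_{\mathfrak{P}_i}=\sigma_i(a_i')$ with $a_i'=\sigma_i^{-1}\alpha_{\mathfrak{P}_i}\in L_{\mathfrak{P}_1}^\times$. The norm computation carried out in the proof of the first semilocal lemma — in multiplicative notation, $N_G(\sigma_j a)=\prod_{i}\sigma_i N_{G_{\mathfrak{P}_1}}(a)$ — shows that $N_{L/K}(\alpha)$, restricted to the $P$-part and read off in $K_P^\times$ via the projection $\pi$ to the first component, equals $N_{G_{\mathfrak{P}_1}}\!\bigl(\prod_i a_i'\bigr)=\prod_i N_{G_{\mathfrak{P}_1}}(a_i')$. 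Finally, since $\sigma_i\colon L_{\mathfrak{P}_1}\to L_{\mathfrak{P}_i}$ is a $K_P$-isomorphism intertwining the two decomposition groups, the local norm is conjugation-invariant, i.e. $N_{G_{\mathfrak{P}_1}}(\sigma_i^{-1}\alpha_{\mathfrak{P}_i})=N_{L_{\mathfrak{P}_i}/K_P}(\alpha_{\mathfrak{P}_i})$, and these assemble into $N_{L/K}(\alpha)_P=\prod_{\mathfrak{P}\mid P}N_{L_\mathfrak{P}/K_P}(\alpha_\mathfrak{P})$.

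For a general finite separable extension $L/K$ I would reduce to the Galois case by passing to a Galois closure $M/K$: the places of $L$ above $P$ correspond to the orbits of $G(M/L)$ on the places of $M$ above $P$ (equivalently to the double cosets $G(M/L)\backslash G(M/K)/G_\mathfrak{Q}$ for a fixed $\mathfrak{Q}\mid P$ in $M$), and grouping the embeddings $\sigma\in G_K/G_L$ according to the place of $L$ they induce reproduces, one $\mathfrak{P}$ at a time, the transitive semilocal picture above. Alternatively, one may invoke the canonical decomposition $L\otimes_K K_P\cong\prod_{\mathfrak{P}\mid P}L_\mathfrak{P}$, under which multiplication by $x\in L$ block-diagonalises, so that $N_{L/K}(x)=\prod_{\mathfrak{P}\mid P}N_{L_\mathfrak{P}/K_P}(x)$ in $K_P$; applying this componentwise to an idele gives the claim.

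The only real obstacle is the bookkeeping: verifying that the places above $P$ form a single $G$-orbit with decomposition group $G(L_\mathfrak{P}/K_P)$, and checking that the conjugation isomorphisms $\sigma_i$ genuinely intertwine the local norm maps, so that $N_{G_{\mathfrak{P}_1}}$ applied to the untwisted components reassembles as $\prod_{\mathfrak{P}\mid P} N_{L_\mathfrak{P}/K_P}$. These are standard facts about the splitting of places in a finite separable extension, so once the $P$-part has been identified with the semilocal setup the result follows at once from the norm formula proved earlier; no input beyond the semilocal lemmas and the transitivity of the norm is required.
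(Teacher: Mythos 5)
Your argument is correct and is essentially the proof the paper intends: the paper gives no in-text argument beyond ``same as the number field case, see \S 6.2 of \cite{NANT}'', and the standard proof there is exactly your grouping of the embeddings in $G_K/G_L$ by the place of $L$ they induce (equivalently the decomposition $L\otimes_K K_P\cong\prod_{\mathfrak{P}\mid P}L_{\mathfrak{P}}$), with the Galois case amounting to the norm computation in the first semilocal lemma. The two bookkeeping points you flag --- that the places over $P$ form one $G$-orbit with decomposition group $G(L_{\mathfrak{P}}/K_P)$, and that $N_{L_{\mathfrak{P}_1}/K_P}(\sigma_i^{-1}\alpha_{\mathfrak{P}_i})=N_{L_{\mathfrak{P}_i}/K_P}(\alpha_{\mathfrak{P}_i})$ --- are standard and close the argument, so there is no gap.
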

				\begin{proof}
					Same as the number field case. See \S6.2 of $\cite{NANT}$.
				\end{proof}
				\subsection{Class field theory axiom}	
				
				We plan to prove the class field theory axiom for characteristic $p$ global field $K$, taking Idele class group $C_K$ as the central object. 
				
				First we plan to prove two inequalities, which are classically known as the First and the Second inequality. The proof of the Second inequality in the case where the degree of the extension divides $p=char(K)$ requires some lengthy argument to prove. So the readers who would like to assume the inequalities and see the original work, may skip to the next chapter. The proof was taken from $\cite{AT}$.
				\subsubsection{The First inequality}
				We state one side of inequality which we shall refer to as the First Inequality.
				\begin{thm}\label{FIQ}
					Let $L/K$ be a cyclic extension of degree $n$ with Galois group $G=G(L/K)$. Then $h(G(L/K),C_L)=n$. In particular $$\#{H}^0(G,C_L)=(C_k:N_{L/K}C_L)\geq n.$$
				\end{thm}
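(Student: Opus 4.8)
The plan is to compute the Herbrand quotient $h(G, C_L)$ directly, since the stated inequality $(C_K : N_{L/K} C_L) \geq n$ follows once we know $h(G, C_L) = n$ together with the elementary fact that $\#H^0(G, C_L) = h(G, C_L) \cdot \#H^{-1}(G, C_L) \geq h(G, C_L)$. So the real content is the equality of the Herbrand quotient, and the idele class group must be broken into pieces whose Herbrand quotients we can control using Proposition \ref{herbrand}.

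First I would pass from $C_L = I_L / L^\times$ to the ideles themselves. Choose a finite $G$-stable set $S$ of places of $L$ containing all ramified places and large enough that $I_L = I_L^S \cdot L^\times$, where $I_L^S$ is the group of $S$-ideles (units outside $S$). Then $C_L \cong I_L^S / (L^\times \cap I_L^S) = I_L^S / O_S^\times$, the $S$-units. The multiplicativity of the Herbrand quotient reduces the problem to computing $h(G, I_L^S)$ and $h(G, O_S^\times)$ separately, provided both are finite. For the idele part, $I_L^S$ decomposes as a product over places of $K$ in $S$ of local factors $\prod_{\mathfrak{P} \mid P} L_\mathfrak{P}^\times$ on which $G$ permutes the $\mathfrak{P}$ transitively, times a product of unit groups $\prod_{\mathfrak{P} \mid P} U_\mathfrak{P}$ for $P \notin S$. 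Here the semilocal Lemmas of Section 2 apply: the Herbrand quotient of each induced (semilocal) factor reduces to that of its local component $(G_\mathfrak{P}, L_\mathfrak{P}^\times)$ by the isomorphisms $H^i(G, \mathrm{Ind}) \cong H^i(G_\mathfrak{P}, L_\mathfrak{P}^\times)$. Each local factor contributes its local Herbrand quotient $h(G_\mathfrak{P}, L_\mathfrak{P}^\times) = [L_\mathfrak{P} : K_P]$ by local class field theory (via the exact sequence and Lemma \ref{hunit}, which gives $h = 1$ on units), while the factors coming from unramified places outside $S$ contribute $1$ because $h(G_\mathfrak{P}, U_\mathfrak{P}) = 1$. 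Assembling these gives $h(G, I_L^S) = \prod_{P \in S} [L_\mathfrak{P} : K_P]$.

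For the $S$-unit part $O_S^\times$, the strategy is the classical one: tensor with $\mathbb{R}$ to embed $O_S^\times$ (modulo its finite torsion, the constant field units) as a lattice in the trace-zero hyperplane of $\prod_{v \in S} \mathbb{R}$, via the logarithmic embedding $u \mapsto (\log |u|_v)_{v \in S}$. Since a finite $G$-module has Herbrand quotient $1$ by Proposition \ref{herbrand}, the torsion is harmless, and $h(G, O_S^\times)$ equals the Herbrand quotient of this lattice. A standard comparison shows that this lattice is commensurable as a $G$-module with the augmentation-type lattice inside $\prod_{v \in S} \mathbb{Z}[G/G_v]$, so by the invariance of the Herbrand quotient under commensurability (or by directly computing with the permutation module $\prod_{v \in S} \mathbb{Z}[G/G_v]$ and the short exact sequence $0 \to L_0 \to \prod \mathbb{Z}[G/G_v] \to \mathbb{Z} \to 0$) one obtains $h(G, O_S^\times) = \frac{1}{n}\prod_{P \in S}[L_\mathfrak{P}:K_P]$. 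Dividing the two contributions then yields $h(G, C_L) = h(G, I_L^S)/h(G, O_S^\times) = n$, as required.

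The main obstacle is the $S$-unit computation: unlike the idele factors, $O_S^\times$ is not visibly an induced module, and making the lattice comparison rigorous requires the Dirichlet-type $S$-unit theorem for function fields (giving the rank of $O_S^\times$ as $\#S - 1$ with finite torsion equal to $\mathbb{F}_q^\times$) together with a careful argument that the logarithmic lattice and the standard permutation lattice $\prod_{v \in S}\mathbb{Z}[G/G_v]$ have the same Herbrand quotient despite only being commensurable, not equal, as $G$-modules. Establishing this commensurability invariance — or equivalently producing a $G$-equivariant map with finite kernel and cokernel between the two lattices — is the technical heart of the argument; everything else follows from the multiplicativity of $h$, the semilocal lemmas, and the already-established local theory.
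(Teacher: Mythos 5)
Your argument is correct in outline, but it takes the classical number-field route rather than the one the paper uses, and it is worth seeing why the paper can avoid your ``technical heart'' entirely. The paper never introduces $S$-ideles or $S$-units: it filters $I_L \supset I_L^0 \supset UL^\times \supset L^\times$, where $U=\prod_{\mathfrak{P}}U_{\mathfrak{P}}$ is the \emph{full} unit idele group and $I_L^0$ is the kernel of the idelic norm. Multiplicativity of $h$ then gives $h(C_L)=h(I_L/I_L^0)\cdot h(I_L^0/UL^\times)\cdot h(UL^\times/L^\times)$; the first factor is $h(\mathbb{Z})=n$ by Proposition \ref{Decomp}, the second is $1$ because $I_L^0/UL^\times\cong \mathrm{Cl}^0(L)$ is finite (Proposition \ref{divisor class}), and the third is $1$ because $U\cap L^\times$ is the finite constant-field unit group and $h(U)=1$ by the semilocal lemmas plus Lemma \ref{hunit}. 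This works only because a global function field has no archimedean places: the full unit group $U$ already has trivial Herbrand quotient, and the role your lattice computation plays is absorbed into the single finiteness statement for $\mathrm{Cl}^0(L)$. Your approach does go through for function fields --- the $S$-unit theorem holds with rank $\#S-1$, the image of $O_S^\times$ under $u\mapsto (v_w(u))_w$ has finite index in the kernel of the sum map on $\bigoplus_{w\in S}\mathbb{Z}$, and finite-index sublattices have equal Herbrand quotients, so $h(G,O_S^\times)=\frac{1}{n}\prod_{P\in S}[L_{\mathfrak{P}}:K_P]$ as you claim --- but it imports archimedean language ($\log|\cdot|$, trace-zero hyperplanes in $\prod\mathbb{R}$) and a commensurability argument that are simply not needed here. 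What your version buys is uniformity with the characteristic-zero case; what the paper's version buys is brevity and the explicit point, made in the text, that the absence of archimedean places makes the First Inequality substantially easier for function fields.
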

				Note that this proof is a lot easier than corresponding result in characteristic $0$ as global function fields do not have an archimedian place. For this proof only we write $h(A):=h(G(L/K),A)$ for each $G(L/K)$-module $A$.
				\begin{proof}
					Let $U=\prod_\mathfrak{P} U_\mathfrak{P}$ be the unit Ideles of $L$. Clearly $U\subset I_L^0$ so $UL^\times\subset I_L^0$. The multiplicity of Herbrand quotient gives
					$$h(I_L/L^\times)=h(I_L/I_L^0)h(I_L^0/UL^\times)h(UL^\times/L^\times).$$ 
					Using Proposition \ref{Decomp} and the Third Isomorphism Theorem we obtain $h(I_L/I_L^0)=h(\mathbb{Z})=n$. We recall that Proposition \ref{divisor class} says the number of divisor classes of degree zero is finite. As $\mathrm{Cl}^0(L)\cong I_L^0/UL^\times$ is a finite group, $h(I_L^0/UL)=1$.
					
					$UL^\times/L^\times$ is isomorphic to $U/(U\cap L^\times)$ as a $G(L/K)$-module and hence 
					$$h(UL^\times/L^\times)=h(U/(U\cap L^\times))=h(U)h(U\cap L^\times)^{-1}.$$ $U\cap L^\times$ is the multiplicative group of the constant field of $L$ so it is a finite group. Hence $h(U\cap L^\times)=1$. We can express $U$ as a direct product
					$$U=\prod_{P}(\prod_{\mathfrak{P}|P}U_\mathfrak{P}).$$ Let $U_P$ denote any of $U_\mathfrak{P}$ for $\mathfrak{P} | P$ then we have for $i=0, -1$,
					$$H^i(G,\prod_{\mathfrak{P}|P}U_\mathfrak{P})\cong H^i(G_P, U_P)$$ for $i=0,-1$ by the semilocal theory (Lemma 1,2) and hence $$H^i(G(L/K),U)\cong\prod_P H^i(G_P,U_P)$$

					By Lemma \ref{hunit}, $h(G_P,U_P)=1$ for all $P$. Therefore $h(U)=\prod_P h(G_P,U_P)=1$ and $h(C_L)=n.$ 
				\end{proof}
				Now we state two consequences of the First inequality. These will play important role for the proof of the Second inequality.
				\begin{cor}\label{fc1}
					Let $L/K$ be a Galois extension of a global field. If almost all places split completely in $L$, then $L=K$. Moreover if $L/K$ is cyclic of prime degree $p^r$, then there are infinitely many places of $K$ which do not split in $L$.
				\end{cor}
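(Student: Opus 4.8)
The plan is to deduce both assertions from the First Inequality (Theorem \ref{FIQ}), which already gives $(C_K:N_{L/K}C_L)\ge n>1$ for every nontrivial cyclic extension, together with the description of idelic norms in Proposition \ref{norm}. For the first assertion I would argue by contradiction. If $L\neq K$ then $G=G(L/K)$ is a nontrivial finite group, hence contains an element of prime order $\ell$; replacing $K$ by the fixed field of the cyclic subgroup it generates, I may assume $L/K$ is itself cyclic of prime degree $\ell$. This reduction is legitimate because splitting completely is inherited downward: if a place of the base splits completely in $L$, then each of its finitely many extensions splits completely in the smaller extension, so ``almost all split completely'' is preserved.

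Let $S$ be the finite set of places of $K$ that do not split completely in $L$. By Proposition \ref{norm} the $P$-component of an idelic norm is $\prod_{\mathfrak{P}\mid P}N_{L_{\mathfrak{P}}/K_P}(\alpha_{\mathfrak{P}})$, and since the local components of an idele may be chosen freely (using that the unramified local norm is surjective on units to handle the restricted-product condition) I obtain $N_{L/K}I_L=\{b\in I_K: b_P\in N_P\ \text{for all }P\}$, where $N_P=\prod_{\mathfrak{P}\mid P}N_{L_{\mathfrak{P}}/K_P}L_{\mathfrak{P}}^\times$. For $P\notin S$ one has $L_{\mathfrak{P}}=K_P$, so $N_P=K_P^\times$, while for the finitely many $P\in S$ local class field theory tells me $N_P$ is an open subgroup of finite index. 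Hence $I_K/N_{L/K}I_L\cong\prod_{P\in S}K_P^\times/N_P$ is finite.

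The crux is then to show that the diagonal $K^\times$ already surjects onto this finite quotient, for that forces $N_{L/K}C_L=C_K$ and contradicts Theorem \ref{FIQ}. This is exactly weak approximation: the image of $K^\times$ is dense in the finite product $\prod_{P\in S}K_P^\times$, and since $\prod_{P\in S}N_P$ is open the induced map to the discrete finite quotient is surjective. Thus $K^\times\cdot N_{L/K}I_L=I_K$, i.e. $N_{L/K}C_L=C_K$, contradicting $(C_K:N_{L/K}C_L)\ge\ell$; this contradiction forces $L=K$. I expect the only delicate points to be the identification of $N_{L/K}I_L$ and the openness and finite index of the local norm groups, both of which I would simply import from the local theory established in the previous section.

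For the second assertion I would reduce to the case just treated. Let $M/K$ be the unique subextension with $[M:K]=p$. Because the subgroups of the cyclic $p$-group $G(L/K)$ are totally ordered, the decomposition group of a place $P$ equals all of $G(L/K)$ if and only if it surjects onto $G(M/K)$; equivalently, $P$ fails to split in $L$ precisely when it fails to split in $M$. Since $M\neq K$, the first assertion in contrapositive form yields infinitely many places of $K$ not splitting completely in $M$, and for the prime-degree extension $M/K$ ``not splitting completely'' coincides with ``not splitting.'' Transporting this through the equivalence above gives infinitely many places of $K$ that do not split in $L$, as required.
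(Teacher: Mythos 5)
Your argument is correct and is essentially the same as the one the paper invokes (the paper simply cites \cite{NANT}, whose proof is exactly this: reduce to a cyclic subextension of prime degree, identify $N_{L/K}I_L$ locally via Proposition \ref{norm} and surjectivity of unramified norms on units, use weak approximation at the finitely many non-split places to get $K^\times\cdot N_{L/K}I_L=I_K$, and contradict Theorem \ref{FIQ}). Your deduction of the second assertion from the first via the unique degree-$p$ subextension of the cyclic $p$-power extension is also the standard route and is sound.
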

				\begin{proof}
					See \cite{NANT}. (The proof in the text is given for the number field case but as we have Theorem \ref{FIQ}, the same proof can be used for function fields.)
				\end{proof}
				\begin{cor}\label{fc2}
					Let $K_1$, $\cdots$, $K_r/K$ be $r$ cyclic extensions of prime degree $p$ which are mutually disjoint over $K$. Then there exists infinitely many places $P$ that split completely in $K_i$ $(i>1)$ and do not split in $K_1$. 
				\end{cor}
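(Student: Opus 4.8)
The plan is to reduce the statement to the second part of Corollary~\ref{fc1} applied to a single auxiliary cyclic extension of degree $p$. First I would set $M = K_2 \cdots K_r$ and $L = K_1 \cdots K_r = K_1 M$. Since the $K_i$ are cyclic of degree $p$, the restriction map embeds $G(L/K)$ into $\prod_{i=1}^{r} G(K_i/K) \cong (\mathbb{Z}/p\mathbb{Z})^r$, so $G(L/K)$ is automatically an elementary abelian $p$-group and every element has order dividing $p$; this holds regardless of the precise meaning of ``mutually disjoint''. The only role of disjointness is to guarantee $K_1 \cap M = K$, which gives $G(L/M) \cong G(K_1/K)$. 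Hence $L/M$ is cyclic of prime degree $p$, and I write $H_1 = G(L/M) \subseteq G(L/K)$, an order-$p$ subgroup that restricts trivially to $M$ and maps isomorphically onto $G(K_1/K)$.

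Next I would apply Corollary~\ref{fc1} to the cyclic prime-degree extension $L/M$, producing infinitely many places $\mathfrak{p}$ of $M$ that do not split in $L$. After discarding the finitely many places ramified in $L/K$, I would translate the non-splitting of each such $\mathfrak{p}$ into a statement about the place $P$ of $K$ lying below it. Writing $\sigma_P \in G(L/K)$ for the Frobenius of $P$ and $f = f(\mathfrak{p}/P)$ for the residue degree of $\mathfrak{p}$ over $P$, the standard compatibility of Frobenius elements in a tower gives that the Frobenius of $\mathfrak{p}$ in $G(L/M)$ equals $\sigma_P^{f}$. Non-splitting of $\mathfrak{p}$ in $L/M$ means this element is nontrivial, so $\sigma_P^{f} \neq 1$.

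The key observation, and the point that makes the argument go through with no auxiliary control on residue degrees, is that $G(L/K)$ is elementary abelian, so $\sigma_P^{p} = 1$; hence $\sigma_P^{f} \neq 1$ already forces $p \nmid f$, and then $\sigma_P^{f}$ generates the same order-$p$ group as $\sigma_P$. Since $\sigma_P^{f}$ is the Frobenius of $\mathfrak{p}$ and therefore lies in $H_1$, that group is exactly $H_1$, so $\sigma_P \in H_1 \setminus \{1\}$. Consequently $\sigma_P$ restricts trivially to $M$, so $P$ splits completely in $M$ and hence in every $K_i$ with $i>1$, while it restricts nontrivially to $K_1$ under the isomorphism $H_1 \xrightarrow{\sim} G(K_1/K)$, so $P$ does not split in $K_1$.

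Finally, since each place $P$ of $K$ has at most $[M:K]$ places of $M$ above it, the infinitely many $\mathfrak{p}$ produced by Corollary~\ref{fc1} lie over infinitely many distinct places $P$, each with the required splitting behaviour. I expect the only real obstacle to be expository rather than mathematical: pinning down the elementary-abelian structure of $G(L/K)$ from the disjointness hypothesis, and invoking cleanly the relation $\mathrm{Frob}(\mathfrak{p}) = \sigma_P^{\,f(\mathfrak{p}/P)}$ in the tower $K \subset M \subset L$. Once these are in place, the coprimality $p \nmid f$ is forced automatically, and no density or counting input beyond Corollary~\ref{fc1} is needed.
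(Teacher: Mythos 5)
Your proof is correct and follows essentially the same route as the paper: both form the compositum $L=K_1\cdots K_r$, apply Corollary \ref{fc1} to the cyclic degree-$p$ extension $L/(K_2\cdots K_r)$, and exploit that the decomposition group at an unramified place is a cyclic subgroup of the elementary abelian group $G(L/K)$, hence of order exactly $p$ and equal to $G(L/(K_2\cdots K_r))$. The paper expresses this via local degrees $[L_{\mathfrak{P}}:K_P]\le p=[L_{\mathfrak{P}}:(K_2\cdots K_r)_{\mathfrak{P}}]$ while you use Frobenius elements and the relation $\mathrm{Frob}(\mathfrak{p})=\sigma_P^{f}$, but this is only a change of language, not of argument.
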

				\begin{proof}
					Let $L=K_1\cdots K_r$ be the compositum of all $K_j$ for all $1\leq j \leq r$. Then $L/(K_2\cdots K_r)$ is cyclic. Let $\mathfrak{P}$ be a place in $(K_2\cdots K_r)$ which does not split in $L$ and which divides a place $P$ of $K$ which is unramified in $L$. Since almost all places of $(K_2\cdots K_r)$ is unramified in $L$, such $\mathfrak{P}$ exists by the above Corollary. Then $L_\mathfrak{P}/(K_2\cdots K_r)_\mathfrak{P}$ is cyclic of degree $p$. But $L_\mathfrak{P}/K_P$ is also cyclic as $P$ is unramified in $K$. The Galois group of $L/K$ is isomorphic to $(\mathbb{Z}/p\mathbb{Z})^r$ and $L_\mathfrak{P}/K_P$ is a cyclic subgroup. This means $[L_\mathfrak{P}:K_P]\leq p$. Together with $[L_\mathfrak{P}:(K_2\cdots K_r)_\mathfrak{P}]=p$, this shows $(K_2\cdots K_r)_\mathfrak{P}=K_P$. It follows that $P$ splits completely in $(K_2\cdots K_r)$. It does not split in $K_1$ as otherwise $L_\mathfrak{P}=K_P$ which is not the case.
				\end{proof}
				\subsubsection{Second Inequality}
				Let $L/K$ be a cyclic extension of degree $n$ with Galois group $G$ as before. The purpose of this section is to prove the following:
				\begin{Prop}
					$\#H^0(G,C_L)| n$. In particular $\#H^0(G,C_L) \leq n$ and hence equal to $n$.
				\end{Prop}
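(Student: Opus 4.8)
The plan is to combine the First Inequality with a local computation of norm indices, the only genuinely new input being an analysis of Artin--Schreier extensions in the part of the degree equal to $p$. Throughout I write
$$\#H^0(G,C_L)=(C_K:N_{L/K}C_L)=(I_K:K^\times N_{L/K}I_L),$$
the last equality coming from $C_K=C_L^G$ and $C_L=I_L/L^\times$. Since Theorem \ref{FIQ} already gives $h(G,C_L)=n$, we have $\#H^0(G,C_L)=n\cdot\#H^{-1}(G,C_L)\ge n$, so it suffices to prove the reverse bound $\#H^0(G,C_L)\le n$; the equality $\#H^0(G,C_L)=n$, the divisibility $\#H^0(G,C_L)\mid n$, and $\#H^{-1}(G,C_L)=1$ then all follow at once.

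First I would reduce to prime degree. As $G$ is cyclic we may choose a tower $K=K_0\subset K_1\subset\cdots\subset K_r=L$ in which each $K_{i+1}/K_i$ is cyclic of prime degree $\ell_i$, with $\prod_i\ell_i=n$. Because the norm is transitive and a homomorphism can only decrease indices, one gets $(C_K:N_{L/K}C_L)\le\prod_i(C_{K_i}:N_{K_{i+1}/K_i}C_{K_{i+1}})$, so it is enough to bound each factor by $\ell_i$. Thus from now on $L/K$ is cyclic of prime degree $\ell$, and we must show $\#H^0(G,C_L)\le\ell$. Next I would pass to a convenient set of places: choose a finite set $S$ of places of $K$ containing all places ramified in $L$ and large enough that $I_K=K^\times I_{K,S}$, where $I_{K,S}=\prod_{v\in S}K_v^\times\times\prod_{v\notin S}U_v$ (possible by Proposition \ref{Decomp} and the finiteness of the degree-zero divisor class group, Proposition \ref{divisor class}). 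For $v\notin S$ the extension $L_w/K_v$ is unramified, so $U_v=N_{L_w/K_v}U_w$ and these places contribute only norms; hence $\#H^0(G,C_L)$ is a quotient of $\prod_{v\in S}(K_v^\times:N_{L_w/K_v}L_w^\times)$ by the contribution of the $S$-units $E_S=K^\times\cap I_{K,S}$. By the already-established Local Class Field Theory each local factor equals the local degree $[L_w:K_v]\in\{1,\ell\}$, and the $S$-unit contribution is governed by the Herbrand quotient of $E_S$, which the function-field unit theorem (again rooted in Proposition \ref{divisor class}) makes explicit.

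For $\ell\ne p$ this is carried out by Kummer theory: after the unramified constant-field extension $K'=K(\mu_\ell)$, whose degree divides $\ell-1$ and is therefore prime to $\ell$, a restriction--corestriction argument shows that $\#H^0(G,C_L)$ divides the corresponding index over $K'$, so we may assume $\mu_\ell\subset K$ and write $L=K(\sqrt[\ell]{\Delta})$; after enlarging $S$ so that $L$ is generated by the $S$-arithmetic data, the local indices and the $S$-unit count are balanced by the product formula exactly as in the number-field case, yielding the bound $\ell$.

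The main obstacle is the case $\ell=p$, where Kummer theory is unavailable and must be replaced by Artin--Schreier theory: a cyclic extension of degree $p$ has the form $L=K(\wp^{-1}(a))$ with $\wp(x)=x^p-x$ and $a\in K$. Here the local norm index $(K_v^\times:N_{L_w/K_v}L_w^\times)$ is computed through the explicit Artin--Schreier (Schmid--Witt) local symbol, which is expressed in terms of the residue $\mathrm{res}_P$ of a differential, and the global balancing that plays the role of the product formula is the residue theorem, i.e. the vanishing of $\lambda$ on $K$ obtained from Proposition \ref{dc}. This is precisely why the derivative, the residue map, the separating prime of Lemma \ref{sep}, and the integrality statement of Lemma \ref{dt/dp} were developed above: they are the ingredients needed to make the global residue pairing well defined and non-degenerate. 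The careful verification that this pairing forces $\#H^0(G,C_L)\le p$, and hence closes the $\ell=p$ case, is the technical heart of the argument; combined with the preceding steps and Theorem \ref{FIQ} it gives $\#H^0(G,C_L)=n$ as required.
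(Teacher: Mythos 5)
Your overall architecture matches the paper's: reduce via the First Inequality and a tower/index argument to cyclic extensions of prime degree $\ell$, dispose of $\ell\neq p$ by the standard Kummer-theoretic argument (which the paper also omits, citing the literature), and isolate $\ell=p$ as the case requiring Artin--Schreier theory and the residue pairing. The reduction step and the observation that $h(G,C_L)=n$ converts an upper bound into equality are both correct.

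The genuine gap is that for $\ell=p$ you name the ingredients but never give the argument: ``the careful verification that this pairing forces $\#H^0(G,C_L)\le p$'' is precisely the content that has to be supplied, and it is not a routine imitation of the Kummer case. In particular, your framing suggests balancing local norm indices over $S$ against an $S$-unit count via the product formula; that counting scheme breaks down at $\ell=p$ because the relevant groups ($K/\mathscr{P}K$ and its $S$-truncations, $I_K/K^\times I_K^p$) lack the finiteness that makes the Kummer-theoretic index computation close up --- which is exactly why a different mechanism is needed. What the paper actually does is global duality: it proves (Proposition \ref{pairing}) that the residue pairing $\psi$ on $K\times I_K$ has kernels exactly $\mathscr{P}K$ and $K^\times I_K^p$, deduces (Theorem \ref{dual}) a topological isomorphism $C_K/C_K^p\cong G_p=G(K^{sep,p}/K)$, and then shows that the image $H'$ of the norm group $K^\times N_{L/K}I_L$ equals the subgroup $H$ fixing $L$: given any other degree-$p$ cyclic $K'=K(\mathscr{P}^{-1}x)$, Corollary \ref{fc2} supplies a place $P$ split in $L$ but inert in $K'$, the idele $[\pi_P]_P$ is then a norm from $L$ yet $\psi_P(x,\pi_P)\neq 0$ by Lemma \ref{local pairing}, so $H'$ does not fix $K'$. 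Since $[G_p:H']=[I_K:K^\times N_{L/K}I_L]\geq p$ and $H'=H$ has index $p$, the inequality follows. Without this (or an equivalent) argument your proposal asserts the key bound rather than proving it.
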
	 	
				This inequality is classically known as the Second inequality. It is possible to perform reduction steps to show that it suffices to prove the inequality in the case of cyclic extension of prime degree. 
				\begin{lemma}
					If $L/K$ is a finite extension of degree $m$ then $[C_K:N_{L/K}C_L]$ is finite and divides a power of $m$.
				\end{lemma}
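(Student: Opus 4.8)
The plan is to separate the statement into two parts: an easy \emph{exponent bound}, namely that $C_K/N_{L/K}C_L$ is annihilated by $m$, and the genuinely substantial \emph{finiteness}. These combine at once: a finite abelian group whose exponent divides $m$ can only have primes dividing $m$ in its order, so its order divides $m^k$ for $k$ large. Thus once finiteness is in hand, the divisibility by a power of $m$ is automatic, and it suffices to establish the two pieces.

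For the exponent bound I would use the natural inclusion $\iota:C_K\hookrightarrow C_L$ together with Proposition \ref{norm}. Computing the composite $N_{L/K}\circ\iota$ place by place, for $a\in C_K$ one finds $N_{L/K}(\iota(a))_P=\prod_{\mathfrak{P}\mid P}N_{L_\mathfrak{P}/K_P}(a_P)=a_P^{\sum_{\mathfrak{P}\mid P}[L_\mathfrak{P}:K_P]}=a_P^{[L:K]}$, since the local degrees of the places above $P$ sum to $[L:K]$. Hence $a^m=N_{L/K}(\iota(a))\in N_{L/K}C_L$ for every $a\in C_K$, so $m$ annihilates $C_K/N_{L/K}C_L$. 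Note this bound applies verbatim to any finite extension.

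The main work is finiteness. First I would reduce to the Galois case: letting $M/K$ be the Galois closure of $L/K$, transitivity of the norm gives $N_{M/K}C_M=N_{L/K}\bigl(N_{M/L}C_M\bigr)\subseteq N_{L/K}C_L$, so it suffices to prove $[C_K:N_{M/K}C_M]$ is finite for $M/K$ Galois with group $G$. By the exponent bound applied to $M/K$, the group $H^0(G,C_M)=C_K/N_{M/K}C_M$ has exponent dividing $|G|$, so it is finite as soon as each of its $\ell$-primary parts, for $\ell\mid|G|$, is finite. Fixing $\ell$ and a Sylow $\ell$-subgroup $G_\ell\subseteq G$, the restriction--corestriction relation $\mathrm{cor}\circ\mathrm{res}=[G:G_\ell]$, with $[G:G_\ell]$ prime to $\ell$, shows that $\mathrm{res}$ embeds the $\ell$-primary part of $H^0(G,C_M)$ into $H^0(G_\ell,C_M)=C_{M_\ell}/N_{M/M_\ell}C_M$, where $M_\ell=M^{G_\ell}$. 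It then remains to see $[C_{M_\ell}:N_{M/M_\ell}C_M]$ is finite, and here the point is that the $\ell$-group $G_\ell$ is solvable: choosing a chain $M_\ell=E_0\subset E_1\subset\cdots\subset E_r=M$ with each $E_{i+1}/E_i$ cyclic of degree $\ell$, each step has finite norm index by Theorem \ref{FIQ}, and a tower argument (using $N_{E_{i+1}/E_{i-1}}C_{E_{i+1}}=N_{E_i/E_{i-1}}(N_{E_{i+1}/E_i}C_{E_{i+1}})$ and that a homomorphic image of a finite group is finite) propagates finiteness up the tower.

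I expect the restriction--corestriction step to be the principal obstacle to presenting cleanly, since it needs the full Tate cohomology functoriality (the maps $\mathrm{res}$, $\mathrm{cor}$ and the identity $\mathrm{cor}\circ\mathrm{res}=[G:G_\ell]$ on $H^0$) rather than only the groups $H^0$ and $H^{-1}$ introduced so far; I would invoke these facts from \cite{ANTC}. Everything else is controlled by the First Inequality together with elementary bookkeeping of indices, so the Sylow reduction is really what forces us beyond the solvable case and is the crux of the argument.
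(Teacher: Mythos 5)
Your proof is correct, but for the finiteness part it takes a genuinely different route from the paper. The exponent bound ($a^m\in N_{L/K}C_L$ for all $a\in C_K$) is exactly the paper's one-line justification of the divisibility claim, and your reduction to the normal case via transitivity of the norm is also the paper's first step. From there the paper does not touch Sylow subgroups or the First Inequality: it chooses a finite set $S$ of places containing all ramified ones and large enough that $I_K=KI_K^S$ and $I_L=LI_L^S$ (which rests on finiteness of the degree-zero divisor class group, Proposition \ref{divisor class}), and then bounds
$$[I_K:KN_{L/K}I_L]\;\leq\;[I_K^S:N_{L/K}I_L^S]\;=\;\prod_{P\in S}\#H^0(G_P,L_{\mathfrak{P}}^\times),$$
which is finite by the semilocal lemmas together with the finiteness of the local norm index. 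That argument is pure local--global bookkeeping, gives an explicit bound, and stays within the cohomological vocabulary the paper actually sets up ($H^0$, $H^{-1}$ and the two semilocal lemmas). Your argument instead feeds the cyclic prime-degree case through Theorem \ref{FIQ} (legitimate: the Herbrand quotient being defined there does give finiteness of $H^0$, and the First Inequality is proved before and independently of this lemma, so there is no circularity) and handles general $G$ by the $\ell$-primary decomposition plus $\mathrm{cor}\circ\mathrm{res}=[G:G_\ell]$. As you anticipate, the restriction--corestriction step is the only piece not already available in the paper; on $H^0$ it is concrete enough ($\mathrm{res}$ induced by $C_K=C_M^{G}\subseteq C_M^{G_\ell}=C_{M_\ell}$ using Galois descent, $\mathrm{cor}$ by $N_{M_\ell/K}$) that importing it from \cite{ANTC} is a citation rather than a gap. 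The trade-off: the paper's proof is more elementary and effective, while yours avoids the $S$-idele apparatus at the cost of heavier cohomological machinery and a dependence on the First Inequality.
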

				\begin{proof}
					To prove the finiteness of norm index we can assume $L/K$ is normal using transitivity of the norm map. Also we write $N=N_{L/K}$ for the proof of this Lemma only. 
					
					Let $S$ be a finite subset of $\mathbb{P}_K$ including all ramified places, and enough places such that $I_K=KI_K^S$, $I_L=LI_L^S$. See definition of $I_K^S$ and $I_L^S$ in \cite{NANT}. Then $KNI_L=K\cdot NL\cdot NI_L^S=KNI_L^S$. So we obtain
					$$[I_K:KNI_L]=[KI_K^S:KNI_L^S]\leq[I_K^S:NI_L^S]=\#H^0(G,I_L^S)= \prod_{P\in S} n_P$$ where $n_P=\#H^0(G_P,K_P^\times)$. The last equality follows from semilocal theory. This proves the finiteness.
					
					The norm index divides a power of the degree because for any $a\in C_K$, $a^m\in N_{L/K}C_L$ for any $L\supset K$.
				\end{proof}
				\begin{lemma}\label{reduction}
					Let $K\subset L' \subset L$ be two finite extensions. Then
					
					1. $[C_K:N_{L'/K}(C_{L'})]$ divides $[C_K:N_{L/K}(C_L)]$. 	\\		
					2. $[C_K:N_{L/K}(C_L)]$ divides $[C_K:N_{L'/K}(C_{L'})]\cdot [C_{L'}:N_{L/L'}(C_L)]$
					
					Consequently if the Second Inequality holds for $L/L'$ and $L'/K$ then it also holds for $L/K$.
				\end{lemma}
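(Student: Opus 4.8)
The plan is to exploit the transitivity of the norm map, which gives $N_{L/K} = N_{L'/K} \circ N_{L/L'}$ and hence a descending chain of subgroups of $C_K$:
$$N_{L/K}(C_L) = N_{L'/K}\bigl(N_{L/L'}(C_L)\bigr) \subseteq N_{L'/K}(C_{L'}) \subseteq C_K.$$
All the indices appearing below are finite by the previous lemma, so I may freely use the multiplicativity of group indices in a tower. Applying it to this chain yields
$$[C_K : N_{L/K}(C_L)] = [C_K : N_{L'/K}(C_{L'})] \cdot [N_{L'/K}(C_{L'}) : N_{L/K}(C_L)].$$

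Part 1 is then immediate, since this factorization exhibits $[C_K : N_{L'/K}(C_{L'})]$ as a divisor of $[C_K : N_{L/K}(C_L)]$. For Part 2 it suffices, in view of the same factorization, to show that the middle index $[N_{L'/K}(C_{L'}) : N_{L/K}(C_L)]$ divides $[C_{L'} : N_{L/L'}(C_L)]$. To see this I would consider the norm map $N_{L'/K}:C_{L'}\to C_K$, which carries $C_{L'}$ onto $N_{L'/K}(C_{L'})$ and carries the subgroup $N_{L/L'}(C_L)$ onto $N_{L'/K}\bigl(N_{L/L'}(C_L)\bigr)=N_{L/K}(C_L)$. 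It therefore induces a surjective homomorphism
$$C_{L'}/N_{L/L'}(C_L) \twoheadrightarrow N_{L'/K}(C_{L'})/N_{L/K}(C_L),$$
so the order of the target divides the order of the source, which is exactly the desired divisibility. Combining this with the tower factorization proves Part 2.

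Finally, the consequence follows formally: if the Second Inequality holds for $L'/K$ and for $L/L'$, then $[C_K : N_{L'/K}(C_{L'})]$ divides $[L':K]$ and $[C_{L'} : N_{L/L'}(C_L)]$ divides $[L:L']$; multiplying these two divisibilities and invoking Part 2 shows that $[C_K : N_{L/K}(C_L)]$ divides $[L':K]\cdot[L:L'] = [L:K]$, which is the Second Inequality for $L/K$. Iterating this reduction lets one treat an arbitrary cyclic extension by decomposing it into a tower of cyclic steps of prime degree, which is where the genuine difficulty lies. There is no serious obstacle in the lemma itself; the only points requiring care are the finiteness of the indices (supplied by the previous lemma) and the verification that $N_{L'/K}$ indeed descends to a well-defined surjection on the relevant quotients.
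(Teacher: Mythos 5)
Your argument is correct and follows exactly the paper's route: the tower factorization $[C_K:N_{L/K}(C_L)]=[C_K:N_{L'/K}(C_{L'})]\cdot[N_{L'/K}(C_{L'}):N_{L'/K}(N_{L/L'}(C_L))]$ via norm transitivity, followed by the observation that $N_{L'/K}$ induces a surjection of $C_{L'}/N_{L/L'}(C_L)$ onto the middle quotient. You simply spell out the details (finiteness of indices, well-definedness of the induced surjection) that the paper leaves implicit.
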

				\begin{proof}
					We note that 
					$$[C_K:N_{L/K}(C_L)]=[C_K:N_{L'/K}(C_{L'})][N_{L'/K}(C_{L'}):N_{L'/K}(N_{L/L'}(C_L))].$$ This proves $1$. Considering the homomorphism $C_{L'}\rightarrow N_{L'/K}(C_{L'})$ we can see that $ [N_{L'/K}(C_{L'}):N_{L'/K}(N_{L/L'}(C_L))]$ divides $[C_{L'}:N_{L/L'}(C_L)]$.
				\end{proof} 		
				From this we obtain:
				\begin{cor}
					If the Second inequality holds in all cyclic extensions of prime degree $l$, then it holds in all Galois extensions. 
				\end{cor}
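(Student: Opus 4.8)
The plan is to show that for an arbitrary finite Galois extension $L/K$ with group $G=G(L/K)$ of degree $n$, the norm index $[C_K:N_{L/K}C_L]=\#H^0(G,C_L)$ divides $n$ (here $H^0(G,C_L)=C_K/N_{L/K}C_L$ by Galois descent, so this is indeed the Second Inequality for $L/K$). The idea suggested by Lemma \ref{reduction} is to split $L/K$ into a tower whose successive steps are cyclic of prime degree and then apply the tower property repeatedly. \textbf{The main difficulty} is that a general Galois group need not be solvable, so no such tower exists in general, and I would circumvent this by arguing one rational prime at a time using Sylow subgroups.

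First I would fix a prime $\ell$ and choose a Sylow $\ell$-subgroup $G_\ell\subseteq G$, setting $M=L^{G_\ell}$. Then $L/M$ is Galois with group $G_\ell$, so $[L:M]=\ell^{a}$ is the $\ell$-part of $n$, while $[M:K]=[G:G_\ell]$ is prime to $\ell$. Applying part $2$ of Lemma \ref{reduction} to $K\subseteq M\subseteq L$ shows that $[C_K:N_{L/K}C_L]$ divides $[C_K:N_{M/K}C_M]\cdot[C_M:N_{L/M}C_L]$. By the earlier Lemma that a norm index divides a power of the degree, the first factor divides a power of $[M:K]$ and is therefore prime to $\ell$, so the $\ell$-part of $[C_K:N_{L/K}C_L]$ already divides the $\ell$-part of $[C_M:N_{L/M}C_L]$.

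It then remains to treat $L/M$, whose group $G_\ell$ is an $\ell$-group and hence nilpotent. Here I would use that an $\ell$-group admits a chain of subgroups $1=P_0\triangleleft P_1\triangleleft\cdots\triangleleft P_a=G_\ell$ with each quotient $P_{i+1}/P_i$ cyclic of order $\ell$; passing to fixed fields $M_i=L^{P_i}$ produces a tower $M=M_a\subseteq\cdots\subseteq M_0=L$ in which every step $M_i/M_{i+1}$ is Galois cyclic of prime degree $\ell$. By hypothesis the Second Inequality holds at each step, so iterating the tower property of Lemma \ref{reduction} yields it for $L/M$; in particular $[C_M:N_{L/M}C_L]$ divides $[L:M]=\ell^{a}$. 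Combining with the previous step, the $\ell$-part of $[C_K:N_{L/K}C_L]$ divides $\ell^{a}$, the $\ell$-part of $n$. Since $\ell$ was arbitrary, $[C_K:N_{L/K}C_L]$ divides $n$, which is the desired Second Inequality for $L/K$.
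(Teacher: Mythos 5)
Your proof is correct and follows essentially the same route as the paper: fix a prime $\ell$, pass to the fixed field of a Sylow $\ell$-subgroup, note that the lower index is prime to $\ell$ while the upper extension is a tower of cyclic degree-$\ell$ steps, and combine via Lemma \ref{reduction}. You merely spell out more explicitly (via the subnormal chain in an $\ell$-group) why the tower of cyclic prime-degree steps exists, which the paper takes for granted.
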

				\begin{proof}
					Let $L/K$ be Galois and let $l$ be a prime. Let $L'$ be the fixed field of an $l$-Sylow subgroup of the Galois group $G=G(L/K)$. $L/L'$ is a tower of cyclic fields of degree $l$ and by Lemma \ref{reduction} we may assume that the inequality holds in $L/L'$.
					
					$[C_{L'}:N_{L/L'}(C_L)]$ divides $[L:L']$. On the other hand, $[C_K:N_{L'/K}(C_{L'})]$ divides a power of $[L':K]$, and is therefore prime to $l$. From the fact that $[C_K:N_{L/K}(C_L)]$ divides $[C_K:N_{L'/K}(C_{L'})]\cdot [C_{L'}:N_{L/L'}(C_L)]$ it follows now that the $l$-contribution to $[C_K:N_{L/K}(C_K)]$ divides $[L:L']$ and consequently $[L:K]$. Then the inequality for $L/K$ follows.
				\end{proof}
				%If $[L:k]=l$ where $l$ is a prime not equal to the characteristic of $k$, there is an explicit description of $C_k/N_{L/k}C_l$ in \cite{ANTC} p180-184 and shows that in that case $[C_k:N_{L/k}C_L]=[L:k]$. The approach is basically the same as Neukirch's approach in $\cite{NANT}$ but there have been some minor changes to include the function field case.
				
				Using the above corollary we now have reduced to the case where $[L:K]=l$ is a prime. If $l \neq p= char(K)$, Neukirch's proof in number field case generalises naturally. This lengthy argument is easy to find in many texts (for instance in \cite[p.~180$-$184]
				{ANTC}) so we shall omit the proof.
				
				Now we consider extensions of degree $p$. For this we first recall Artin-Schreier Theory:
				\begin{thm}\label{AS}
					Let $K$ be a field of characteristic $p>0$. Then there is one to one correspondence between the additive subgroups $\Delta$ of $K$ which contain $\mathscr{P}K,$ and abelian extensions $L/K$ with groups of exponent $p$. The correspondence is given by
					$$\Delta \rightarrow L_{\Delta}=K(\mathscr{P}^{-1}\Delta).$$
				\end{thm}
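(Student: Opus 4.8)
The plan is to run the exact analogue of Kummer theory, with the Kummer sequence replaced by the Artin--Schreier sequence. Write $\mathscr{P}(x)=x^p-x$; on a separable closure $K^{sep}$ this is an $\mathbb{F}_p$-linear endomorphism of the additive group with kernel exactly $\mathbb{F}_p$, and it is surjective because every polynomial $X^p-X-a$ is separable and splits in $K^{sep}$. Thus we have a short exact sequence of $G_K=G(K^{sep}/K)$-modules
$$0\to\mathbb{F}_p\to (K^{sep})^+\xrightarrow{\mathscr{P}}(K^{sep})^+\to 0,$$
with $G_K$ acting trivially on $\mathbb{F}_p$. First I would take the associated long exact sequence in Galois cohomology.

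The key input is the additive form of Hilbert 90, $H^1(G_K,(K^{sep})^+)=0$, which follows from the normal basis theorem (equivalently $H^1(G(L/K),L^+)=0$ for every finite Galois $L/K$). The long exact sequence then collapses to a connecting isomorphism
$$\delta:K/\mathscr{P}K\;\xrightarrow{\ \sim\ }\;H^1(G_K,\mathbb{F}_p)=\mathrm{Hom}_{cts}(G_K,\mathbb{F}_p).$$
This is a perfect duality between the discrete $\mathbb{F}_p$-vector space $K/\mathscr{P}K$ and the profinite group $G_K^{ab}/p\,G_K^{ab}$. Under it, additive subgroups $\Delta$ with $\mathscr{P}K\subseteq\Delta\subseteq K$ correspond to closed subgroups $H\leq G_K$ with $G_K/H$ abelian of exponent $p$, i.e. to abelian extensions $L/K$ of exponent $p$, by sending $\Delta/\mathscr{P}K$ to the common kernel of the characters it indexes.

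Next I would check that the field attached to $\Delta$ is exactly $L_\Delta=K(\mathscr{P}^{-1}\Delta)$ by unwinding $\delta$ explicitly: for $a\in K$, the character $\delta(a+\mathscr{P}K)$ sends $\sigma\mapsto\sigma\alpha-\alpha\in\mathbb{F}_p$, where $\alpha$ is any root of $X^p-X=a$, and its kernel is precisely $G_{K(\alpha)}$, so the fixed field of $\bigcap_{a\in\Delta}\ker\delta(a+\mathscr{P}K)$ is $K(\mathscr{P}^{-1}\Delta)$. Restricting to finite $\Delta$, this realizes the pairing
$$G(L_\Delta/K)\times\Delta/\mathscr{P}K\to\mathbb{F}_p,\qquad(\sigma,a)\mapsto\sigma\alpha-\alpha,$$
as a perfect pairing of finite abelian groups, giving $G(L_\Delta/K)\cong\mathrm{Hom}(\Delta/\mathscr{P}K,\mathbb{F}_p)$ and in particular $[L_\Delta:K]=[\Delta:\mathscr{P}K]$; the general case follows by passing to the union over finite subextensions.

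The main obstacle is the non-degeneracy of this correspondence, and both directions rest on additive Hilbert 90. Surjectivity---that every abelian extension of exponent $p$ is of the form $L_\Delta$---is exactly the statement that the connecting map $\delta$ hits all of $\mathrm{Hom}_{cts}(G_K,\mathbb{F}_p)$, which is the vanishing $H^1(G_K,(K^{sep})^+)=0$. Injectivity---that distinct $\Delta$ give distinct $L_\Delta$---amounts to recovering $\Delta=K\cap\mathscr{P}L_\Delta$, i.e. that no Artin--Schreier element outside $\Delta$ becomes a $\mathscr{P}$-value in $L_\Delta$; this follows by comparing $\mathbb{F}_p$-dimensions through the perfect pairing above, which again uses the vanishing to guarantee the pairing is non-degenerate on the Galois side.
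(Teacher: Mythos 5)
The paper does not actually prove this statement: Theorem \ref{AS} is quoted as classical Artin--Schreier theory (the analogue of Kummer theory), with the reader referred to the literature, so there is no in-paper argument to compare against. Your proof is the standard cohomological one and it is correct: the Artin--Schreier sequence $0\to\mathbb{F}_p\to (K^{sep})^+\to(K^{sep})^+\to 0$ is exact (separability of $X^p-X-a$ gives surjectivity of $\mathscr{P}$ on $K^{sep}$), additive Hilbert 90 via the normal basis theorem kills $H^1(G_K,(K^{sep})^+)$, and the connecting isomorphism $K/\mathscr{P}K\cong\mathrm{Hom}_{cts}(G_K,\mathbb{F}_p)$ plus Pontryagin duality yields the lattice correspondence, with $L_\Delta=K(\mathscr{P}^{-1}\Delta)$ identified by unwinding the cocycle $\sigma\mapsto\sigma\alpha-\alpha$. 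One small refinement: Hilbert 90 is really only needed for surjectivity of the correspondence (every exponent-$p$ abelian extension arises from some $\Delta$); the non-degeneracy of the finite pairing $G(L_\Delta/K)\times\Delta/\mathscr{P}K\to\mathbb{F}_p$ on both sides follows directly from the definitions ($\sigma\alpha=\alpha$ for all $\sigma$ forces $\alpha\in K$, and an element of the Galois group fixing every $\mathscr{P}^{-1}a$ fixes $L_\Delta$), so your claim that both directions ``rest on'' Hilbert 90 is slightly overstated but harmless.
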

				This is analogue to Kummer theory which plays an important role in the characteristic $0$ and $[L:K]\neq p$ case. Note we can also describe any cyclic extension of degree $p^n$ of a characteristic $p$ field in a similar manner by using Witt vectors. For this see \cite{Witt}.
				
				Now let $K$ be a global function field with constant field $F=\mathbb{F}_{p^r}$. For each place $P$ of $K$ let $K_P$ be the usual local field associated with $P$ and let $F_P$ denote its residue field. Then $F_P$ naturally contains $F$ and $K_P=F_P((t))$ where $t$ is a prime element of $K_P$ which is also in $K$. By Lemma \ref{sep} we can also assume $t$ is separating.
				
				Let $Tr=Tr_{F_p/\mathbb{F}_p}$ and for $x$, $y \in K_P$ let $res(xdy)$ be the residue of a local differential $xdy$ with respect to $t$. 
				
				For $x\in K_P$ and $y \in K_P^\times$ we define 
				$$\psi_P(x,y)=Tr(res(x\frac{dy}{y}))=Tr(res(x\frac{\frac{dy}{dt}}{y}))dt$$
				Then we easily verify that $\psi_P(x_1+x_2,y)=\psi_P(x_1,y)+\psi_P(x_2,y)$ and $\psi_P(x,y_1y_2)=\psi_P(x,y_1)+\psi_P(x,y_2)$ and therefore is a pairing of the two groups $K_P$ and $K_P^\times$ into additive group $\mathbb{F}_p$. This pairing is continuous since if $x\in \mathcal{O}_{P}$ and $y\in \mathcal{O}_{P}^\times$ then $res(x\frac{dy}{y})=0$.
				\begin{lemma}\label{local pairing}
					Let $x$ be in $\mathcal{O}_{P}$. Then $\psi_P(x,y)=0$ if and only if either $p | v_P(y)$ or $x=\mathscr{P}z$ for some $z\in K_P$. 
				\end{lemma}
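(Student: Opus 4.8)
The plan is to reduce the statement to a single direct residue computation and then read off the equivalence from Artin--Schreier theory for the finite residue field $F_P$. First I would write $y = t^{v}u$ with $v = v_P(y)$ and $u \in U_P$, so that the logarithmic derivative splits as $\frac{dy}{y} = v\frac{dt}{t} + \frac{du}{u}$. Since $u$ is a unit, $\frac{du}{u}$ lies in $\mathcal{O}_P\,dt$, and because $x \in \mathcal{O}_P$ the product $x\frac{du}{u}$ is then holomorphic and contributes no residue. The only surviving term is $v\cdot res(x\frac{dt}{t}) = v\cdot \bar{x}$, where $\bar{x} \in F_P$ is the reduction of $x$ modulo $m_P$. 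This gives the clean formula $\psi_P(x,y) = v_P(y)\cdot Tr(\bar{x})$, where $Tr$ is the trace $F_P \to \mathbb{F}_p$ and the integer $v_P(y)$ acts through its residue modulo $p$.

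With this formula in hand, $\psi_P(x,y)$ vanishes in $\mathbb{F}_p$ exactly when $v_P(y)\cdot Tr(\bar{x}) = 0$, that is, when $p \mid v_P(y)$ or $Tr(\bar{x}) = 0$. So it remains only to identify the condition $Tr(\bar{x}) = 0$ with membership of $x$ in $\mathscr{P}K_P$.

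For that identification I would assemble three standard facts. First, for the finite field $F_P$ the additive map $a \mapsto a^p - a$ has kernel $\mathbb{F}_p$, so its image $\mathscr{P}F_P$ has index $p$; since $Tr(a^p - a) = 0$ and $Tr$ is surjective, $\mathscr{P}F_P = \ker Tr$, whence $Tr(\bar{x}) = 0 \iff \bar{x} \in \mathscr{P}F_P$. Second, $\mathscr{P}$ is additive and, by Hensel's lemma applied to $z^p - z - x$ (whose derivative $-1$ is a unit), every element of $m_P$ lies in $\mathscr{P}\mathcal{O}_P$. Combining these: if $Tr(\bar{x}) = 0$ then $\bar{x} = \mathscr{P}\bar{c}$ for some constant $c \in F_P \subseteq \mathcal{O}_P$, so $x - \mathscr{P}c$ reduces to $0$, hence lies in $m_P \subseteq \mathscr{P}\mathcal{O}_P$, and additivity gives $x \in \mathscr{P}\mathcal{O}_P \subseteq \mathscr{P}K_P$. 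Third, conversely, if $x \in \mathcal{O}_P$ equals $\mathscr{P}z$, then $z$ must be integral, for a pole $v_P(z) = -k < 0$ would force $v_P(z^p - z) = -pk < 0$, contradicting $x \in \mathcal{O}_P$; reducing then yields $\bar{x} = \mathscr{P}\bar{z} \in \mathscr{P}F_P$, so $Tr(\bar{x}) = 0$.

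The genuine content, and the step I expect to be the main obstacle, is concentrated in this last identification: the equality $\mathscr{P}F_P = \ker Tr$ together with the valuation argument forcing $z$ to be integral and the approximation giving $m_P \subseteq \mathscr{P}\mathcal{O}_P$. By contrast the residue computation is essentially formal once $y$ is put in the form $t^{v}u$. I would therefore isolate the formula $\psi_P(x,y) = v_P(y)\cdot Tr(\bar{x})$ as the key computational step and then dispatch the two implications of the equivalence using the facts above.
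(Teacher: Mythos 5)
Your proposal is correct and follows essentially the same route as the paper: reduce to the formula $\psi_P(x,y)=v_P(y)\cdot Tr(\bar{x})$ by killing the unit part of $y$, then identify $Tr(\bar{x})=0$ with $x\in\mathscr{P}K_P$ via the additive Hilbert~90 statement for the finite residue field together with a lifting argument and the valuation argument forcing $z$ to be integral. The only (cosmetic) differences are that you invoke Hensel's lemma where the paper writes down the explicit convergent series $z_1=-(x_1+x_1^p+x_1^{p^2}+\cdots)$, and you argue $\mathscr{P}F_P=\ker Tr$ directly rather than citing $H^{-1}=0$ for cyclic extensions.
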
	
				\begin{proof} 
				Let $x=\sum_{i=0}^{\infty} a_it^i$ with $a_i\in F$, and $y=t^nu$ where $u$ is a unit of $K_P^\times$. $\psi_P(x,u)=0$ and consequently
				$$\psi_P(x,y)=nTr(res(x\frac{dt}{t}))=nTr(a_0).$$
				$\psi_P(x,y)=0$ is therefore equivalent with $p|n$ or $Tr(a_0)=0$ and we have to show that $Tr(a_0)=0$ is equivalent with $x=\mathscr{P}z$.
				
				Suppose $x=z^p-z$. It is easy to prove $z\in \mathcal{O}_{P}$ by considering valuation of $z^p-z$. If $z=\sum_{i=0}^{\infty} b_it^i$ then we have $a_0=b_0^p-b_0$. Recalling raising to the power of $p$ is the generating element for $F_p/\mathbb{F}_p$, we view it as $a_0=(\sigma-1)b$ and as $H^{-1}(G(L/K),K)=0$ for any cyclic extension $L/K$ we obtain $Tr(a_0)=0$. Conversely suppose $Tr(a_0)=0$. Then $a_0=(\sigma-1)b=\mathscr{P}b$ for some $b \in F_P$. Let $x_1=x-a_0$ and put $z_1=-(x_1+x_1^p+x_1^{p^2}+\cdots).$ This series converges in $k_P$ and therefore we get $x_1=z_1^p-z_1-\mathscr{P}z_1$ and consequently
				$$x=\mathscr{P}(b+z_1).$$
				\end{proof}
				Now break the $Tr$ map into trace $Tr_P=Tr_{F_P/F}$ and followed by trace $Tr'=Tr_{F/\mathbb{F}_p}$. Choose a place $Q$ of $K$ and let $t$ be a prime of $K_Q$ separating in $K$. For $\alpha\in \mathbb{A}_K$ let
				$$\lambda(\alpha)=\sum_PTr_P(res_P(\alpha_P dt))$$ as before. Put now 
				$$f(\alpha)dt=Tr'(\lambda(\alpha)).$$
				\begin{lemma}
					Let $\alpha \in \mathbb{A}_K$ and suppose that $f(\alpha x)dt=0$ for all $x \in K$. Then $\alpha\in K$.
				\end{lemma}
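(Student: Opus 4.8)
The plan is to reduce this statement to the consequence of Proposition \ref{dc} already recorded at the end of the previous subsection: if $\lambda(x\alpha)=0$ for all $x\in K$, then $\alpha\in K$. The only gap to bridge is that the hypothesis gives us the vanishing of $f(\alpha x)dt=Tr'(\lambda(\alpha x))$, where $Tr'=Tr_{F/\mathbb{F}_p}$ takes values in $\mathbb{F}_p$, rather than the vanishing of the $F$-valued quantity $\lambda(\alpha x)$ itself. So the whole content of the proof is to promote the weaker $\mathbb{F}_p$-valued vanishing to the stronger $F$-valued vanishing.

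First I would fix an arbitrary $x\in K$ and exploit that the constant field satisfies $F\subset K$. For any $c\in F$ the product $cx$ again lies in $K$, so the hypothesis applied to $cx$ in place of $x$ gives $Tr'(\lambda(\alpha\cdot cx))=0$. Since $\lambda$ is $F$-linear (both $res_P$ and the trace $Tr_P=Tr_{F_P/F}$ are $F$-linear, so their composite is), this reads $Tr'\!\left(c\cdot\lambda(\alpha x)\right)=0$ for every $c\in F$.

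Writing $z=\lambda(\alpha x)\in F$, we have obtained $Tr_{F/\mathbb{F}_p}(cz)=0$ for all $c\in F$. Because $F/\mathbb{F}_p$ is a finite extension of finite fields, hence separable, its trace form $(c,z)\mapsto Tr_{F/\mathbb{F}_p}(cz)$ is nondegenerate, and therefore $z=0$. This shows $\lambda(\alpha x)=0$ for the chosen $x$, and since $x\in K$ was arbitrary we conclude $\lambda(\alpha x)=0$ for all $x\in K$. Applying the consequence of Proposition \ref{dc} then yields $\alpha\in K$, as desired.

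The main (and essentially only) delicate point is the middle step: combining the $F$-linearity of $\lambda$ with the nondegeneracy of the trace form of $F/\mathbb{F}_p$ to upgrade the pointwise vanishing of the $\mathbb{F}_p$-valued map $f$ into the pointwise vanishing of the $F$-valued map $\lambda$. Everything after that is a direct appeal to the already-proved continuous analogue of the strong approximation/duality statement in Proposition \ref{dc}, so I do not expect any further obstacle.
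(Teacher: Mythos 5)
Your proof is correct and follows essentially the same route as the paper: replace $x$ by $cx$ for $c\in F$, use $F$-linearity of $\lambda$ to get $Tr_{F/\mathbb{F}_p}(c\cdot\lambda(\alpha x))=0$ for all $c$, deduce $\lambda(\alpha x)=0$ from the nonvanishing of the trace of the separable extension $F/\mathbb{F}_p$, and conclude via Proposition \ref{dc}. The only cosmetic difference is that you phrase the key step as nondegeneracy of the trace form, while the paper phrases it as the impossibility of $Tr'(F)=0$; these are the same fact.
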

				\begin{proof}
					We may replace $x$ by $cx$ for any $c\in F$ and obtain $Tr'(c\cdot\lambda(\alpha x))=0$. This means $\lambda(\alpha x)= 0$ as if it were not the case then we have $Tr'(F)=0$. This is not true as $\mathbb{F}_p$ is perfect. Then by Proposition $\ref{dc}$  that $\alpha \in K$.
				\end{proof}
				We recall that for almost all $P$ the derivatives $\frac{dt}{dt_P}$ is in $U_P$ where $t_P$ is any prime element at place $P$. Let $\alpha\in I_K$. Then we can form an Adele $\frac{1}{\alpha}\frac{d\alpha}{dt}$ with local components 
				$$\frac{1}{\alpha_P}\frac{d\alpha_P}{dt}=\frac{1}{\alpha_P}\frac{d\alpha_p}{dt_P}(\frac{dt}{dt_P})^{-1}.$$ Now we define the following pairing on $K\times I_K$ into $\mathbb{F}_p$:
				$$\psi(x,\alpha)=f(x)dt=Tr'\sum_PTr_P(res_P(x\frac{1}{\alpha_P}\frac{d\alpha_P}{dt}dt))=\sum_PTr(res_P(x\frac{d\alpha_P}{\alpha_P}))=\sum_P\psi_P(x,\alpha_P).$$ Last expression shows that $\psi$ is indeed a pairing. For continuity nothing needs to be shown for the discrete $K$. If $\alpha_P$ is close to $1$ then $\frac{d\alpha_P}{dt_P}$ is close to $0$. Now we describe the kernel of this pairing. For a pairing $\eta:A \times B \rightarrow \mathbb{F}_P$, kernel of $A$ is defined as the set of elements $a\in A$ such that $\eta(a,b)=0$ for all $b\in B$. Kernel of $B$ is also defined in a similar way.
				\begin{Prop}\label{pairing}
					The kernel of $I_K$ in our pairing $\psi$ is precisely $K^\times\cdot I_K^p$ and the kernel of $K$ is the additive group $\mathscr{P}K$.
				\end{Prop}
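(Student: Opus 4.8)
The plan is to treat the two kernel computations separately: in each case the easy inclusion follows from a local residue identity and the hard inclusion from a global argument. Everything rests on the local identity $res_P(z^p\,\omega)=\bigl(res_P(z\,\omega)\bigr)^p$, valid for any logarithmic differential $\omega=\tfrac{dy}{y}$. This follows from the fact that logarithmic differentials are fixed by the Cartier operator $C$, together with $C(z^p\omega)=z\,C(\omega)$ and the Cartier residue formula $res_P(C\eta)=res_P(\eta)^{1/p}$. Since $Tr=Tr_{F_P/\mathbb{F}_p}$ is invariant under the Frobenius $a\mapsto a^p$, this identity gives
$$\psi_P(\mathscr{P}z,y)=Tr\bigl(res_P(z^p\tfrac{dy}{y})\bigr)-Tr\bigl(res_P(z\tfrac{dy}{y})\bigr)=0$$
for every $z\in K_P$ and $y\in K_P^\times$, while $\tfrac{d(y^p)}{y^p}=0$ gives $\psi_P(x,y^p)=0$.

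First I would record the easy inclusions $\mathscr{P}K\subseteq\ker$ and $K^\times\cdot I_K^p\subseteq\ker$. For $z\in K$ the local vanishing just proved gives $\psi(\mathscr{P}z,\alpha)=\sum_P\psi_P(\mathscr{P}z,\alpha_P)=0$. For $a\in K^\times$ the adele $x\cdot\tfrac{1}{a}\tfrac{da}{dt}$ lies in $K$, so $\psi(x,a)=Tr'\bigl(\lambda(x\tfrac{1}{a}\tfrac{da}{dt})\bigr)=0$ because $\lambda$ vanishes on $K$; and for $\gamma^p\in I_K^p$ the relation $\tfrac{d\gamma_P^p}{\gamma_P^p}=0$ gives $\psi(x,\gamma^p)=0$. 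This settles $\supseteq$ in both assertions.

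For the kernel of $K$, suppose $x\in K$ satisfies $\psi(x,\alpha)=0$ for all $\alpha\in I_K$. Testing against the one-place ideles $\alpha=[y]_P$ gives $\psi_P(x,y)=0$ for every $y\in K_P^\times$ and every place $P$. By Lemma \ref{local pairing}, refined so as to detect the pole terms of order prime to $p$ (one reduces $x$ modulo $\mathscr{P}K_P$ to a standard form and chooses $y$ isolating each surviving coefficient), this forces $x\in\mathscr{P}K_P$ at every place $P$; equivalently, every place of $K$ splits completely in the Artin--Schreier extension $K(\mathscr{P}^{-1}x)$. By Corollary \ref{fc1} (splitting at almost all places forces triviality) this extension equals $K$, so $x\in\mathscr{P}K$.

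For the kernel of $I_K$, suppose $\alpha\in I_K$ satisfies $\psi(x,\alpha)=0$ for all $x\in K$. Writing $w=\tfrac{1}{\alpha}\tfrac{d\alpha}{dt}$, which is an adele by Lemma \ref{dt/dp}, the hypothesis reads $f(xw)\,dt=0$ for all $x\in K$, so the lemma immediately preceding this Proposition yields $w\in K$; that is, $\tfrac{d\alpha}{\alpha}$ is a global differential of $K$ that is locally a logarithmic derivative at every place. It then remains to produce a single $a\in K^\times$ with $\tfrac{da}{a}=\tfrac{d\alpha}{\alpha}$: once this is done, $\beta=\alpha/a$ satisfies $\tfrac{d\beta_P}{dt}=0$ everywhere, hence $\beta_P\in K_P^{\times p}$ since the residue fields are perfect, so $\beta\in I_K^p$ and $\alpha\in K^\times\cdot I_K^p$. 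I expect this construction of a global $a$ to be the main obstacle. The differential $\tfrac{d\alpha}{\alpha}$ has at worst simple poles whose residues are the $v_P(\alpha)$ reduced into the prime field, and these residues sum to zero by the residue theorem, so the divisor $\sum_P v_P(\alpha)P$ has degree zero. The obstruction to writing $\tfrac{d\alpha}{\alpha}=\tfrac{da}{a}$ globally is then governed by the degree-zero divisor class group, and here I would invoke its finiteness from Proposition \ref{divisor class} to clear it, exactly as in the analytic treatment of \cite{AT}.
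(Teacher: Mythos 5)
Your easy inclusions and your treatment of the kernel of $K$ are essentially sound. The Cartier-operator identity $res_P(z^p\omega)=res_P(z\omega)^p$ for $\omega=\frac{dy}{y}$ is a legitimate (and arguably cleaner) substitute for the paper's route, which handles $\mathscr{P}K\subseteq\ker$ by the approximation theorem at the poles of $x$ and Lemma \ref{local pairing} elsewhere. For the converse inclusion you invoke an unproved ``refinement'' of Lemma \ref{local pairing} to elements $x$ with poles; this is an over-claim, but also an unnecessary one: $x$ has only finitely many poles, so you may restrict to the cofinitely many places where $x\in\mathcal{O}_P$, where the lemma applies as stated, and Corollary \ref{fc1} only needs splitting at almost all places. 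That is exactly what the paper does.

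The genuine gap is in the kernel of $I_K$. You correctly reduce to the step you flag as the main obstacle: given $w=\frac{1}{\alpha}\frac{d\alpha}{dt}\in K$ which is locally a logarithmic derivative at every place, produce $a\in K^\times$ with $\frac{da/dt}{a}=w$. Your proposed resolution does not work. First, the residue theorem applied to $w\,dt$ only gives $\sum_P\deg(P)\,v_P(\alpha)\equiv 0\pmod p$ (the residue of $\frac{d\alpha_P}{\alpha_P}$ is $v_P(\alpha_P)$ reduced into $F_P$), not that $\sum_P v_P(\alpha)P$ has degree zero. Second, and more fundamentally, an element $a\in K^\times$ with the correct divisor need not have the correct logarithmic derivative --- $\frac{da}{a}$ depends on $a$ itself, not merely on $\mathrm{div}(a)$ --- so no statement about $Cl^0(K)$, finite or otherwise, addresses the actual obstruction, and your sketch gives no mechanism by which Proposition \ref{divisor class} ``clears'' it. The missing ingredient is the paper's Lemma A: $x$ is a logarithmic derivative of some $y$ with $D^n(y)=0\neq D^{n-1}(y)$ if and only if $(D+x)^n(1)=0\neq(D+x)^{n-1}(1)$, via the operator identity $D+x=y^{-1}Dy$. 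Since $D^p=0$ on $K_Q=F_Q((t))$ and the condition $(D+w)^p(1)=0$ is evaluated entirely inside $K$ once $w\in K$ and $D(K)\subseteq K$, being a logarithmic derivative in a single completion $K_Q$ already forces $w$ to be a logarithmic derivative of some $z\in K^\times$; then $\beta=z^{-1}\alpha$ has vanishing derivative and lies in $I_K^p$, as you intended. With that descent statement your argument closes; without it the proof is incomplete.
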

				We additionally note that every element of $K^\times$ has order $p$ and that $I_K/K^\times\cdot I_K^p\cong C_K/C_K^p$ is compact. Therefore the duality theory can be used. 
				\begin{proof}
					See Appendix.
				\end{proof}	 	 		 		
				Let $K^{sep,p}=K(\mathscr{P}^{-1}K)$ be the maximal $p$-extension of $K$. By Theorem $\ref{AS}$, $K^{sep,p}$ is associated with $A=\mathscr{P}^{-1}K$.  $A$ is paired with the Galois group $G_p=G(K^{sep,p}/K)$ by $(a,\sigma)=(\sigma-1)a$. We have mapped $A$ into $K$ by the map $a \mapsto \mathscr{P}a$. The image of $A$ is the whole field $K$ and we may view this pairing also as a pairing of $K$ and $G_p$ into $\mathbb{F}_p$ where
				$$[x,\sigma]=(\sigma-1)(\mathscr{P}^{-1}x),$$ $x\in K,$ $\sigma \in G_p$. The kernel of $K$ is now $\mathscr{P}K$, the same as in the pairing $\psi(x,G_p);$ that of $G_p$ is $1$. Under this pairing the group $G_p$ is naturally isomorphic to the character group of $K/\mathscr{P}K$ (where $K/\mathscr{P}K$ has the discrete topology). In the pairing $\psi(x,G_p)$ the group $I_K/K^\times I_K^p$ is naturally isomorphic to the same character group. It follows that one has a natural isomorphism between $G_p$ and $I_K/K^\times I_K^p$. If $\sigma$ is the image of the Idele $a$ under the map then we should have $\psi(x,a)=[x,\sigma]=(\sigma-1)(\mathscr{P}^{-1}x)$. We denote this map by $\sigma(a)$ and the previous equation describes for us just the effect of $\sigma(a)$ on the generators of $K^{sep,p}$, a description which determines $\sigma(a)$ completely:
				$$\sigma(a):\mathscr{P}^{-1}x\mapsto\mathscr{P}^{-1}x+\psi(x,a).$$ This map is actually a surjective homomorphism of $C_K$ into $G_p$ with kernel $C_K^p$. We therefore have:
				\begin{thm}\label{dual}
					The map above is a topological isomorphism between $I_K/K\cdot I_K^p$ (or $C_K/C_K^p$) and the Galois group of the maximal $p$-extension $K^{sep,p}$.
				\end{thm}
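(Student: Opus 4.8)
The plan is to exhibit both $G_p$ and $I_K/K^\times I_K^p$ as Pontryagin duals of one and the same discrete group, namely $K/\mathscr{P}K$, and then to identify the resulting isomorphism with the explicit map $a\mapsto\sigma(a)$ described just above the statement.

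First I would assemble the two pairings on an equal footing. Artin--Schreier theory (Theorem \ref{AS}) provides the pairing $[x,\sigma]=(\sigma-1)(\mathscr{P}^{-1}x)$ of $K$ with $G_p$ into $\mathbb{F}_p$; as observed in the text its left kernel is exactly $\mathscr{P}K$ and its right kernel is trivial, so $\sigma\mapsto[\,\cdot\,,\sigma]$ embeds $G_p$ into the character group $\mathrm{Hom}(K/\mathscr{P}K,\mathbb{F}_p)$. On the other side, Proposition \ref{pairing} computes the kernels of $\psi$: the left kernel is again $\mathscr{P}K$ and the right kernel is $K^\times I_K^p$. Since $\psi$ is additive in each argument, $a\mapsto\psi(\,\cdot\,,a)$ is a homomorphism from the multiplicative group $I_K$ to the additive character group, and the kernel computation shows that it induces an injection of $I_K/K^\times I_K^p$ into $\mathrm{Hom}(K/\mathscr{P}K,\mathbb{F}_p)$.

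Next I would invoke Pontryagin duality to upgrade these injections to isomorphisms. The group $K/\mathscr{P}K$ carries the discrete topology, each pairing is continuous and separates variables on both sides, and ---as recorded just before the statement--- $I_K/K^\times I_K^p\cong C_K/C_K^p$ is compact, while $G_p$ is profinite, hence compact. A continuous pairing between a compact group and a discrete group that separates both variables is perfect, so each factor is the full character group of the other; this yields $G_p\cong\mathrm{Hom}(K/\mathscr{P}K,\mathbb{F}_p)\cong I_K/K^\times I_K^p$. Composing the two identifications, and unwinding the definitions, the resulting isomorphism sends $a$ to the unique $\sigma\in G_p$ with $[x,\sigma]=\psi(x,a)$ for all $x$, which is precisely $\sigma(a):\mathscr{P}^{-1}x\mapsto\mathscr{P}^{-1}x+\psi(x,a)$. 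Its kernel is the set of $a$ with $\psi(\,\cdot\,,a)\equiv 0$, namely $K^\times I_K^p$, so the induced map on $I_K/K^\times I_K^p$ is an isomorphism of abstract groups.

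For the topological assertion I would use continuity of $\psi$ in the second variable (noted above), which makes $a\mapsto\sigma(a)$ continuous into the profinite group $G_p$; since its source $C_K/C_K^p$ is compact and its target is Hausdorff, this continuous bijection is automatically a homeomorphism. I expect the genuine difficulty to lie entirely inside Proposition \ref{pairing}, whose kernel computation (deferred to the appendix) supplies the nondegeneracy of $\psi$ on the idele side. Granting that, the only subtle point in the present argument is the correct use of Pontryagin duality to pass from separation of variables to a perfect pairing: without the compactness of $C_K/C_K^p$ and the discreteness of $K/\mathscr{P}K$ one would obtain only injections rather than the desired isomorphism.
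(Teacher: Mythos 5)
Your argument is correct and follows essentially the same route as the paper: both the text and your proposal realize $G_p$ and $I_K/K^\times I_K^p$ as the character group of the discrete group $K/\mathscr{P}K$ via the Artin--Schreier pairing $[x,\sigma]$ and the pairing $\psi$ respectively, using the kernel computations together with the compactness of $C_K/C_K^p$ to make the duality perfect, and then compose the two identifications to obtain $a\mapsto\sigma(a)$. Your explicit treatment of the topological statement (continuous bijection from a compact group to a Hausdorff group is a homeomorphism) and of why nondegeneracy plus compact/discrete yields a perfect pairing merely spells out steps the paper leaves implicit.
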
 
				We finally have all the ingredients required for the proof.	
				\begin{proof}[Proof of the Second inequality in the case $n=p$]
					Let $L/K$ be a cyclic extensions of degree $p$. As $L$ is a subfield of $K^{sep,p}$, there is an open subgroup $H$ of $G_p$ of index $p$ such that $(K^{sep,p})^{H}=L$. The norm group $K\cdot N_{L/K}(I_L)$ is open and of finite index in $I_K$. According to Theorem \ref{FIQ} the index is at least $p$. The image of this group under the isomorphism of Theorem \ref{dual} is an open subgroup $H'$ of $G_p$. The index of $H'$ in $G_p$ is finite and at least $p$. $H'$ determines a certain subfield $E/K$ of $K^{sep,p}/k$ which is the compositum of cyclic fields of prime degree and each of these fields is left fixed by $H'$. We shall prove that for cyclic extension $K'/K$ of degree $p$ which is different from $L$, there is an element of $H'$ which does not leave every element in $K'$ fixed. This will show that $H'=H$ and prove the second inequality.
					
					By Corollary \ref{fc2} there are infinitely places $P$ of $K$ which splits completely in $L$ and remains prime in $K'$. If $K'=K(\mathscr{P}^{-1}x)$ we can therefore select $P$ in such a way that $x$ does not have a pole at $P$. Let $\pi_P$ be a prime of $K_P$ and consider the Idele $[\pi_P]_P$. It is a norm from $L$ since $P$ splits in $L$ and hence $\sigma([\pi_P])$ is in $H'$. Now we compute $\psi(x,[\pi_P])=\psi_P(x,\pi_P).$ By construction $P$ does not split in $K'$ so $x$ is not in $\mathscr{P} K_P.$ As $p$ does not divide $v_P(a_P)=1$, Lemma \ref{local pairing} implies that $\psi(x,a_P)\neq0$. This is equivalent to saying $\sigma([\pi_P])$ does not fix $\mathscr{P}^{-1}x$ and hence $H=H'$. 
				\end{proof}

				\subsection{Valuation and Deg}
				Let $\tilde{k}_0=\mathbb{F}_p^{sep}(T)$. Then $G(\tilde{k}_0/k_0)$ is topologically generated by the map $x\mapsto x^p$, namely the Frobenius, which we shall denote by $\varphi$, and is isomorphic to $\hat{\mathbb{Z}}$ via the map $\varphi\mapsto 1$. Denote this isomorphism $deg$.
				
				Then as in the abstract case for each finite extension $K$ of $k_0$ we can define an isomorphism 
				$$deg_K= \frac{1}{f_K}deg:G(\tilde{K}/K)\rightarrow \hat{\mathbb{Z}}$$ where $f_K=[K\cap \tilde{k}_0:k_0]$.
				
				Now we define the map [ , $L/K$] for every abelian extension $L/K$ of global fields of characteristic $p$ by
				$$[\alpha,L/K]=\prod_P(\alpha_P,\text{ }L_P/K_P)$$
				where ( , $L_P/K_P$) is the local norm residue symbol. Note for each $\alpha=(\alpha_P)\in I_K$ , $(\alpha_P,$ $L_P/K_P)$ are equal to $1$ for almost all $P$ as almost all $\alpha_P$ are units and almost all extensions $L_P/K_P$ are unramified.
				\begin{Prop}\label{com}
					If $L/K$ and $L'/K'$ are two abelian extensions such that $K\subset K'$ and $L\subset L'$ then we have the commutative diagram		
					\begin{center}
						\begin{tikzpicture}
						\matrix(m) [matrix of math nodes, row sep=3em, column sep=4em, minimum width=2em]
						{
							I_{K'}&	G(L'/K')\\
							I_K&	G(L/K)\\
						};
						\path[-stealth]
						(m-1-1) edge node [left] {$N_{K'/K}$} (m-2-1)
						(m-1-1) edge node [above] { $[$ , $L'/K']$} (m-1-2)
						(m-2-1) edge node [above] { $[$ , $L/K]$} (m-2-2)
						(m-1-2) edge node [right] {} (m-2-2);
						\end{tikzpicture}
						\end {center}	
					\end{Prop}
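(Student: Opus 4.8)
The plan is to reduce this global statement to the functoriality of the \emph{local} norm residue symbol, which is available from the local class field theory recalled in Section 2.3, and then to reconcile the two ways of passing between local and global Galois groups through decomposition groups. The norm formula of Proposition \ref{norm} is exactly the tool needed to convert the global norm $N_{K'/K}$ into a product of local norms, so I would build the argument around it.

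First I would take $\alpha' \in I_{K'}$ and expand the lower-left route of the diagram. By definition $[N_{K'/K}(\alpha'), L/K] = \prod_P (N_{K'/K}(\alpha')_P, L_P/K_P)$, the product running over the places $P$ of $K$. Applying Proposition \ref{norm} to write $N_{K'/K}(\alpha')_P = \prod_{P'\mid P} N_{K'_{P'}/K_P}(\alpha'_{P'})$, and using that each local symbol $(\cdot, L_P/K_P)$ is a homomorphism, this becomes $\prod_P \prod_{P'\mid P} (N_{K'_{P'}/K_P}(\alpha'_{P'}), L_P/K_P)$. Next I would invoke the functoriality of the local norm residue symbol (the local analogue of the displayed commutative square, valid since local class field theory holds): because $K_P \subseteq K'_{P'}$ and $L_P \subseteq L'_{P'}$, each factor equals $(\alpha'_{P'}, L'_{P'}/K'_{P'})$ restricted to $L_P$. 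On the other hand, the upper-right route gives $[\alpha', L'/K']\big|_L = \prod_{P'} (\alpha'_{P'}, L'_{P'}/K'_{P'})\big|_L$, the product now over places $P'$ of $K'$. Regrouping the double product $\prod_P \prod_{P'\mid P}$ as the single product $\prod_{P'}$, it remains to check factor by factor that restricting the local symbol $(\alpha'_{P'}, L'_{P'}/K'_{P'})$ to $L_P$ and then viewing the result inside $G(L/K)$ via the decomposition group agrees with restricting it directly to $L$ as an element of $G(L'/K')$.

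The hard part is exactly this last reconciliation, which is where the decomposition-group bookkeeping lives. I would fix compatible places $\mathfrak{P}' \mid P' \mid P$ of $L'$, $K'$ and $K$, together with the induced place $\mathfrak{P}$ of $L$, and observe that the local symbol $(\alpha'_{P'}, L'_{P'}/K'_{P'})$ lies in the decomposition group $G(L'_{\mathfrak{P}'}/K'_{P'}) \hookrightarrow G(L'/K')$. The key point to establish is that the square formed by the local restriction $G(L'_{\mathfrak{P}'}/K'_{P'}) \to G(L_{\mathfrak{P}}/K_P)$ and the global restriction $G(L'/K') \to G(L/K)$, with the decomposition-group embeddings as the vertical maps, commutes; this follows from $L_{\mathfrak{P}} = L\cdot K_P \subseteq L'\cdot K'_{P'} = L'_{\mathfrak{P}'}$ and the fact that restriction of automorphisms is compatible with passage to completions. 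Granting this, each factor on the two sides matches, and the double product over $(P, P'\mid P)$ coincides with the single product over $P'$, which proves the asserted commutativity. The finiteness of the products (almost all factors trivial) is already guaranteed by the remark accompanying the definition of $[\cdot, L/K]$, so no separate convergence check is needed.
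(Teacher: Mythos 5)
Your argument is correct and is essentially the proof the paper has in mind: the paper simply defers to the number field case, and the standard number field argument is exactly your reduction via Proposition \ref{norm} to the functoriality of the local norm residue symbol, followed by the decomposition-group compatibility between local and global restriction. You have in effect written out the details that the paper leaves to the reference.
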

					\begin{proof}
						Exactly same as the number field case.
					\end{proof}
					In the same way as characteristic $0$ case, we can define [ , $L/K$] for an infinite extension $L$ of $K$ by taking projective limit. It is clear that Proposition \ref{com} holds for infinite extension as well.
					
					We define homomorphism $$v_K:I_K\rightarrow \hat{\mathbb{Z}}$$ to be the composite of $[\text{ } ,\tilde{K}/K]$ and $deg_K$.
					\begin{Prop}\label{Reciprocity}
						For every principal element $a\in K$ we have
						$$[a,\tilde{K}/K]=1.$$
					\end{Prop}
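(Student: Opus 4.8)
The plan is to reduce the assertion $[a,\tilde{K}/K]=1$ to the classical fact that a principal divisor has degree zero. Since $[a,\tilde{K}/K]$ lies in $G(\tilde{K}/K)$ and $deg_K\colon G(\tilde{K}/K)\to\hat{\mathbb{Z}}$ is an isomorphism, it is equivalent to show that $v_K(a)=deg_K([a,\tilde{K}/K])=0$. Thus it suffices to compute the homomorphism $v_K\colon I_K\to\hat{\mathbb{Z}}$ explicitly on an arbitrary idele and then specialise to principal ideles.

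First I would analyse a single local factor. The extension $\tilde{K}/K$ is the constant field extension $K\cdot\mathbb{F}_p^{sep}$, which is unramified at every place; hence for a place $P$ its local completion is the maximal unramified extension $K_P^{ur}/K_P$. Applying Proposition \ref{main} to the local field $K_P$ (whose class field theory was recorded in the Local Class Field Theory section) gives $(\alpha_P,\tilde{K}_P/K_P)=\varphi_{K_P}^{\,v_P(\alpha_P)}$, where $\varphi_{K_P}$ is the local Frobenius and $v_P$ the normalised valuation. The only point requiring care is the comparison of normalisations: the local Frobenius acts on residue fields as $x\mapsto x^{q_P}$ with $q_P=q^{\mathrm{Deg}(P)}$, so its restriction to the global constants $\mathbb{F}_p^{sep}$ equals $\varphi_K^{\mathrm{Deg}(P)}$, whence $deg_K\bigl((\alpha_P,\tilde{K}_P/K_P)|_{\tilde{K}}\bigr)=\mathrm{Deg}(P)\,v_P(\alpha_P)$.

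Summing over all places and using the definition $[\alpha,\tilde{K}/K]=\prod_P(\alpha_P,\tilde{K}_P/K_P)$ (a finite product, since almost all $\alpha_P$ are units in unramified extensions), I obtain $v_K(\alpha)=\sum_P\mathrm{Deg}(P)\,v_P(\alpha_P)$; in other words $v_K$ is the degree of the divisor attached to $\alpha$. Specialising to a principal idele $a\in K^\times$ gives $v_K(a)=\sum_P\mathrm{Deg}(P)\,v_P(a)=\mathrm{Deg}((a))=0$, the vanishing being exactly the statement that every principal divisor has degree zero, recorded earlier in the text. Hence $deg_K([a,\tilde{K}/K])=0$ and therefore $[a,\tilde{K}/K]=1$.

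I expect the genuine obstacle to be the bookkeeping in the second paragraph: one must track the two different normalisations of the Frobenius — the local $deg_{K_P}$, for which the local Frobenius has degree $1$, versus the global $deg_K$, for which it restricts to degree $\mathrm{Deg}(P)$ — and verify that the restriction maps are compatible so that the local contributions add up precisely to the divisor degree. Once this is settled, the conclusion is immediate from the product formula, which is why the positive-characteristic case is cleaner than the number-field case: there is no archimedean contribution to reconcile.
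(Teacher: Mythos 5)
Your proof is correct and follows essentially the same route as the paper: both rest on the local computation $(\alpha_P,\tilde{K}_P/K_P)=\varphi_{K_P}^{\,v_P(\alpha_P)}$ supplied by Proposition~\ref{main} and then conclude from $\sum_P \mathrm{Deg}(P)\,v_P(a)=\mathrm{Deg}((a))=0$, i.e.\ the product formula for principal divisors. The only cosmetic difference is that the paper first reduces to $K=\mathbb{F}_p(T)$ via norm functoriality and checks the identity on roots of unity $\zeta_n$, whereas you work directly over $K$ by applying the isomorphism $deg_K$ and tracking the two Frobenius normalisations; the underlying computation is identical.
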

					\begin{proof}
						Since [$N_{K/k_0}(a)$, $\tilde{k_0}/k_0]=[a,$ $\tilde{K}/K]|_{\tilde{k_0}}$, we can assume $K=\mathbb{F}_p(T)$. It suffices to prove $[a,\mathbb{F}_{p^n}(T)/\mathbb{F}_p(T)]=1.$ View $\mathbb{F}_{p^n}$ as $\mathbb{F}_p(\zeta_n)$ where $\zeta_n$ is a primitive $(p^n-1)$th root of unity. Then for each place $Q$ of $K$ and $a\in K$
						\[(a,K_Q(\zeta_n)/K_Q)\zeta_n=\zeta_n^{p^{v_Q(a)}}\] by Proposition \ref{main}. So $[a,K(\zeta_n)/K]\zeta_n=\prod_P((a,K_P(\zeta_n)/K_P)\zeta_n)=\prod_P\zeta_n^{p^{\mathrm{Deg}(P)v_Q(a)}}=\zeta_n$.
					\end{proof}
					Therefore $v_K:I_K\rightarrow \hat{\mathbb{Z}}$ induces a homomorphism
					$$v_K:C_K\rightarrow \hat{\mathbb{Z}}.$$
					
					\begin{cor}
						$v_K(C_K^0)=1.$ More precisely $ker(v_K)=C_K^0.$
					\end{cor}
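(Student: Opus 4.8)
The plan is to compute $v_K$ explicitly on ideles and recognise it as the logarithm of the idelic norm, whose kernel is $I_K^0$ by definition. Recall that $v_K=deg_K\circ[\,\cdot\,,\tilde K/K]$ and that $[\alpha,\tilde K/K]=\prod_P(\alpha_P,\tilde K_P/K_P)$, so the whole computation reduces to understanding each local factor together with its image under $deg_K$.

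First I would identify the local field $\tilde K_P$. Since $\tilde K=K\cdot\tilde k_0=K\mathbb{F}_p^{sep}$ is the constant-field extension and the residue field $F_P\subset\mathbb{F}_p^{sep}$ is finite, completing at any place above $P$ gives $\tilde K_P=\mathbb{F}_p^{sep}((t_P))=K_P^{ur}$, the maximal unramified extension of $K_P$. Hence each local factor is governed by the local instance of Proposition \ref{main} for $A=(k_0^{sep})^\times$, namely $(\alpha_P,\tilde K_P/K_P)=\varphi_{K_P}^{\,v_P(\alpha_P)}$, where $\varphi_{K_P}$ is the local Frobenius and $v_P$ is the normalised valuation of $K_P$ (the same $v_P$ that enters the idelic norm).

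Next I would pin down the restriction map $G(K_P^{ur}/K_P)\to G(\tilde K/K)$ on Frobenius elements, which is the heart of the matter. With the normalisations fixed in the excerpt one has $f_K=[\mathbb{F}_q(T):\mathbb{F}_p(T)]$, so $\varphi_K$ acts on constants as $x\mapsto x^q$, whereas $\varphi_{K_P}$ acts as $x\mapsto x^{q_P}=x^{q^{\mathrm{Deg}(P)}}$; therefore $\varphi_{K_P}|_{\tilde K}=\varphi_K^{\mathrm{Deg}(P)}$, i.e. $deg_K(\varphi_{K_P}|_{\tilde K})=\mathrm{Deg}(P)$. Combining this with the previous step yields, for every $\alpha\in I_K$,
$$v_K(\alpha)=deg_K\bigl([\alpha,\tilde K/K]\bigr)=\sum_P v_P(\alpha_P)\,\mathrm{Deg}(P).$$

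Finally I would compare this with $|\alpha|_K=\prod_P q_P^{\,v_P(\alpha_P)}=q^{\sum_P v_P(\alpha_P)\mathrm{Deg}(P)}$, so that $v_K(\alpha)=\log_q|\alpha|_K$. As $q>1$, the kernel of $v_K$ on $I_K$ is exactly $I_K^0=\ker(|\cdot|_K)$. By Proposition \ref{Reciprocity} the homomorphism $v_K$ descends to $C_K$, and $K^\times\subset I_K^0$ by the product formula, so $\ker(v_K:C_K\to\hat{\mathbb{Z}})=I_K^0/K^\times=C_K^0$; in particular $v_K$ vanishes on $C_K^0$. The only delicate point is the local–global Frobenius compatibility of the third step — verifying that the residue degree $\mathrm{Deg}(P)$ appears as the exponent — while everything else is bookkeeping with the normalisations already fixed in the excerpt.
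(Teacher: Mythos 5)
Your proof is correct and follows essentially the same route as the paper: both arguments reduce to the computation $v_K(\alpha)=\sum_P \mathrm{Deg}(P)\,v_P(\alpha_P)$ via the local formula $(\alpha_P,\tilde K_P/K_P)=\varphi_{K_P}^{v_P(\alpha_P)}$ of Proposition \ref{main}, and then identify the kernel with $I_K^0$. The only cosmetic difference is that the paper first reduces to $K=\mathbb{F}_p(T)$ using $N_{K/k_0}I_K^0\subset I_{k_0}^0$ and reads off the exponent $\mathrm{Deg}(P)$ from the action on roots of unity $\zeta_n$, whereas you obtain the same exponent directly by restricting the local Frobenius to $\tilde K$, which lets you avoid the reduction step.
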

					\begin{proof}
						First we prove that $N_{K/k_0}I_K^0\subset I_{k_0}^0$. Proposition \ref{norm} says for $\alpha\in I_K$, $N_{K/k_0}(\alpha)_P=\prod_{\mathfrak{P}|P}N_{K_\mathfrak{P}/(k_0)_P}(\alpha_\mathfrak{P})$. So using $\prod_{\mathfrak{P}}|\alpha_\mathfrak{P}|_\mathfrak{P}=1$ and $|\alpha_\mathfrak{P}|_\mathfrak{P}=|N_{K/k_0}\alpha_\mathfrak{P}|_P$ we obtain $N_{K/k_0}\alpha \in I_{k_0}^0$.
						
						So as in Proposition \ref{Reciprocity} we can assume $K=\mathbb{F}_p(T)$. Let $\alpha \in I_K^0$.
						At the end of the proof of Proposition \ref{Reciprocity}, we obtained that $(\alpha,K(\zeta_n)/K)=1$ if and only if $\sum_P \mathrm{Deg}(P)v_P(\alpha_P)=0$. This condition is equivalent to $\alpha \in C_k^0$.
						
						%Similarly to proposition \ref{Reciprocity} we can assume $k=k_0$. Let $(\alpha)=\prod \alpha_P \in I_k^0$. For all but finitely many places $P$, $\alpha_P$ are units so let $P_0,$ $P_1$, $\cdots,$ $P_m$ be list all places where the component of $\alpha$ aren't (necessarily) units where $P_0$ denote the infinite place. Let $a=\prod_{i=1}^{m} f_{P_i}^{v_i(\alpha_{P_i})}\in k \subset I_k$. Note $v_P(a)=v_P(\alpha_P)$ for all finite places $P$ by construction. As $\alpha \in I_k^0$ this also implies the valuation at infinite place also coincide. As [$\cdot:\hat{k}/k$] only depend on the valuation at each component $[\alpha:\hat{k}/k]=[a:\hat{k}/k]=1$.
					\end{proof}
					\begin{Prop}
						The map $v_K:C_K\rightarrow \hat{\mathbb{Z}}$ has image $\mathbb{Z}$ and $v$ is a henselian valuation with respect to deg.
					\end{Prop}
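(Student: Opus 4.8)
The plan is to split the statement into two assertions: that $v_K(C_K)=\mathbb{Z}$ for every finite extension $K/k_0$, and that the specific map $v=v_{k_0}$ satisfies the two defining conditions of a henselian valuation with respect to $deg$. Once the image is known to be exactly $\mathbb{Z}$, condition (i) is automatic, since then $Z=\mathbb{Z}\supset\mathbb{Z}$ and $Z/nZ=\mathbb{Z}/n\mathbb{Z}\cong\mathbb{Z}/n\mathbb{Z}$ for all $n$; so the real content lies in the image computation and in condition (ii).

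For the image, I would start from the preceding Corollary, which identifies $\ker v_K=C_K^0$, together with Proposition \ref{Decomp}, which gives $C_K=C_K^0\times\Gamma_K$ with $\Gamma_K\cong\mathbb{Z}$ generated by $[t_P]_P$ for a rational place $P$. Thus $v_K(C_K)=v_K(\Gamma_K)$ is the cyclic subgroup of $\hat{\mathbb{Z}}$ generated by $v_K([t_P]_P)$, and it remains to check this value equals $1$. For this I would unwind the definition $v_K=deg_K\circ[\,\cdot\,,\tilde K/K]$: only the $P$-component of $[t_P]_P$ is nontrivial, so $[[t_P]_P,\tilde K/K]$ is the restriction to $\tilde K$ of the local symbol $(t_P,\tilde K_P/K_P)$. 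Because $\tilde K/K$ is the constant-field extension, it is everywhere unramified, so $\tilde K_P/K_P$ is the maximal unramified extension and local reciprocity (the local instance of Proposition \ref{main}) gives $(t_P,\tilde K_P/K_P)=\varphi_{K_P}^{v_P(t_P)}=\varphi_{K_P}$. Since $P$ is rational, $\varphi_{K_P}$ restricts on $\tilde K$ to the Frobenius $\varphi_K$ over $K$, and $deg_K(\varphi_K)=1$; hence $v_K([t_P]_P)=1$ and $v_K(C_K)=\mathbb{Z}$. Equivalently, running the same computation over all components yields the degree formula $v_K(\alpha)=\sum_P\mathrm{Deg}(P)\,v_P(\alpha_P)$ already implicit in the Corollary, which is manifestly surjective onto $\mathbb{Z}$ because $k_0$, and indeed every $K$ by F.K. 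Schmidt, admits divisors of degree $1$.

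It then remains to verify condition (ii), namely $v(N_{K/k_0}C_K)=f_KZ=f_K\mathbb{Z}$ for $v=v_{k_0}$. The key is the functoriality relation $v_{k_0}\circ N_{K/k_0}=f_K\,v_K$, which I would obtain by applying Proposition \ref{com} to the nested abelian extensions $\tilde{k}_0/k_0\subset\tilde K/K$: the commuting square yields $[N_{K/k_0}(\alpha),\tilde{k}_0/k_0]=[\alpha,\tilde K/K]\big|_{\tilde{k}_0}$. Writing $[\alpha,\tilde K/K]=\varphi_K^{\,v_K(\alpha)}$ and using $\varphi_K\big|_{\tilde{k}_0}=\varphi^{\,f_K}$ together with $deg(\varphi)=1$ and $deg_{k_0}=deg$, one reads off $v_{k_0}(N_{K/k_0}(\alpha))=f_K\,v_K(\alpha)$. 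Taking images and invoking the first part gives $v_{k_0}(N_{K/k_0}C_K)=f_K\,v_K(C_K)=f_K\mathbb{Z}=f_KZ$, which is precisely condition (ii).

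I expect the main obstacle to be bookkeeping of normalisations rather than any deep new input: specifically, confirming that a rational place contributes the value $1$ (so that the image is all of $\mathbb{Z}$ and not a proper subgroup $m\mathbb{Z}$), and tracking the factor $f_K$ through the restriction $\varphi_K\big|_{\tilde{k}_0}=\varphi^{\,f_K}$ in the norm relation. Both reduce to the already-established local reciprocity (Proposition \ref{main}) and to the compatibility of the maps $deg_K=\tfrac{1}{f_K}deg$, so no estimate or genuinely hard step is needed beyond careful compatibility checks.
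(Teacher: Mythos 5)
Your proposal is correct and follows essentially the same route as the paper: the image computation via the decomposition $C_K=C_K^0\times\Gamma_K$ of Proposition \ref{Decomp} and the evaluation $[[t_P]_P,\tilde K/K]=\varphi_{K_P}=\varphi_K$ at a rational place using Proposition \ref{main}, and condition (ii) via the norm functoriality of Proposition \ref{com} giving $v_{k_0}\circ N_{K/k_0}=f_K\,v_K$. The only cosmetic difference is that the paper states the norm computation for an arbitrary finite extension $L/K$ rather than specialising to $K/k_0$, which changes nothing of substance.
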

					
					\begin{proof}
						Recall that in Proposition \ref{Decomp} we obtained decomposition $C_K=C_K^0\times T_p^\mathbb{Z}$  where $T_P=[\pi_P]_P$ and $\pi_P$ is a prime element for some rational place $P$. Using Proposition \ref{main} we know that $[T_P,$ $\tilde{K}/K]=(\pi_P,\tilde{K}_P/K_P) =\varphi_{K_P}$. As $P$ is a rational place $\varphi_{K_P}= \varphi_{K}$. This automatically implies the first assertion as 
						$$v_K(C_K)=v_K(\Gamma_K)=v_K(T_p^\mathbb{Z})=deg_K(\varphi_K^\mathbb{Z})=\mathbb{Z}\subset\hat{\mathbb{Z}}.$$ 
						
						%We first show that im$(v_k)$ contains the Frobenius. Let $l/k$ be a finite subex tension of $\hat{k}/k$. Since ( ,$l_p/k_p):k_p^\times \rightarrow G(l_p/k_p)$ is surjective, $[I_k,l/k]$ contains all decomposition group $G(l_p/k_p)$. Therefore, all primes $p$ of $k$ are totally decomposed in the fixed field $M$ of $[I_k,l/k]$. Then $M=K$ and hence $[I_k,l/k]=G(l/k).$ From this it follows that $[I_k,\hat{k}/k]=[C_k,\hat{k}/k]$ is dense in $G(\hat{k}/k).$ On the other hand we have $C_k=C_k^0\times \Gamma_k.$ For this purpose we let $\Gamma_k$ belong to infinite place. It suffices to prove that $[\Gamma_k,\hat{k}/k]=\mathbb{Z}\subset\hat{\mathbb{Z}}.$ But this is evident from the calculation that has been done so far.
						Then the first condition for $v_K$ being the henselian valuation is satisfied since $v_K(C_K)=\mathbb{Z}$. By Proposition \ref{com}, the second condition is also satisfied as for every finite extension $L/K$
						$$v_K(N_{L/K}C_L)=deg_K[N_{L/K}I_L,\tilde{K}/K]=f_{L/K}deg_L[I_L,\tilde{L}/L]=f_{L/K}v_L(C_L)=f_{L/K}\mathbb{Z}.$$
					\end{proof}
					
					Recall that the Idele class groups $C_K$ satisfy the class field theory axiom and hence we get the following result.
					\begin{thm}
						For every Galois extension $L/K$ of function field over a finite field we have a canonical isomorphism $$r_{L/K}:G(L/K)^{ab}\cong C_K/N_{L/K}C_L.$$ Also the Existence Theorem holds.
					\end{thm}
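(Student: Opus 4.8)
The plan is to recognise the statement as the concluding application of the abstract class field theory set up in Section 2.2: once the Idele class group is installed in the role of the module $A_K$, everything follows formally from the abstract main Theorem and Existence Theorem of that section. Concretely I would take $k_0 = \mathbb{F}_p(T)$, $\tilde{k}_0 = \mathbb{F}_p^{sep}(T)$, and the isomorphism $deg \colon G(\tilde{k}_0/k_0) \cong \hat{\mathbb{Z}}$ sending the Frobenius $\varphi$ to $1$, as in Section 3.4, and let $C = \varinjlim_L C_L$ be the associated $G(k_0^{sep}/k_0)$-module, so that $A_K = C_K$ via the Galois descent property $C_L^{G(L/K)} = C_K$. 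Two things then remain to be checked: that $C$ satisfies the Class Field Theory Axiom, and that $(deg, v_K)$ is a class field theory in the abstract sense.

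First I would verify the axiom for an arbitrary finite cyclic extension $L/K$ of degree $n$ with $G = G(L/K)$. The First Inequality (Theorem \ref{FIQ}) supplies both $h(G, C_L) = n$ and $\#H^0(G, C_L) \geq n$, while the Second Inequality supplies $\#H^0(G, C_L) \mid n$; together these force $\#H^0(G, C_L) = n$. Dividing then gives $\#H^{-1}(G, C_L) = \#H^0(G, C_L)/h(G, C_L) = 1$, which is precisely the second line of the axiom. Thus $C$ satisfies the Class Field Theory Axiom for every cyclic $L/K$, which is all that the abstract machinery requires.

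Next I would confirm that the pair $(deg, v_K)$ is a class field theory. The map $deg$ is continuous and bijective by its construction in Section 3.4. The homomorphism $v_K \colon C_K \to \hat{\mathbb{Z}}$, defined there as $deg_K$ composed with $[\,\cdot\,, \tilde{K}/K]$, was shown in the concluding Proposition of that section to have image $\mathbb{Z}$ and to satisfy both conditions of a henselian valuation relative to $deg$: condition (i) from $v_K(C_K) = \mathbb{Z}$, and condition (ii) from the identity $v_K(N_{L/K} C_L) = f_{L/K}\mathbb{Z}$, itself established using the functoriality of the global symbol (Proposition \ref{com}). With the axiom and the class field theory in hand, the abstract main Theorem yields the canonical isomorphism $r_{L/K} \colon G(L/K)^{ab} \cong C_K/N_{L/K} C_L$ for every finite Galois $L/K$, and the abstract Existence Theorem gives the correspondence between finite abelian extensions of $K$ and open subgroups of $C_K$.

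The hard part is not this final assembly, which is formal, but the legitimacy of the identification that makes it possible, namely that the idelic norm $N_{L/K}$, the fixed-point group $C_L^{G(L/K)}$, and the Tate cohomology $H^i(G, C_L)$ appearing in the inequalities are exactly the norm, fixed points, and cohomology demanded by the abstract axiom for the module $A = C$. This compatibility rests on the Galois descent property $C_L^{G(L/K)} = C_K$ together with the colimit presentation $C = \varinjlim_L C_L$; once it is granted, no further estimate is needed, since all the quantitative content has already been absorbed into the two inequalities and the valuation computation of Section 3.4.
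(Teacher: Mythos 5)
Your proposal is correct and follows exactly the route the paper takes: the theorem is the formal output of the abstract machinery of Section 2.2 once the Class Field Theory Axiom is verified via the First and Second Inequalities (giving $\#H^0(G,C_L)=n$ and hence $\#H^{-1}(G,C_L)=1$ from the Herbrand quotient) and once $(deg, v_K)$ is shown to be a class field theory via the henselian valuation Proposition of Section 3.4. The paper's own ``proof'' is just the one-line remark preceding the theorem; you have simply made the assembly explicit, including the Galois descent point that the paper records when defining $C_K$.
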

					The inverse of $r_{L/K}$ yields the surjective homomorphism
					$$(\text{ },L/K):C_K\rightarrow G(L/K)^{ab}.$$ with kernel $N_{L/K}C_L$.
					
					\begin{Prop}
						If $L/K$ is an abelian extension and $P$ a prime of $K$, then the diagram
						\begin{center}
							\begin{tikzpicture}
							\matrix(m) [matrix of math nodes, row sep=3em, column sep=4em, minimum width=2em]
							{
								K_P^\times &	G(L_P/K_P)\\
								C_K &	G(L/K)\\
							};
							\path[-stealth]
							(m-1-1) edge node [left] {$[$ $]_P$} (m-2-1)
							(m-1-1) edge node [above] { $($ , $L_P/K_P)$} (m-1-2)
							(m-2-1) edge node [above] { $($ , $L/K)$} (m-2-2)
							(m-1-2) edge node [right] {} (m-2-2);
							\end{tikzpicture}
							\end {center}	
							is commutative. 
						\end{Prop}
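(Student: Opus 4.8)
The plan is to reduce the commutativity of the square to the single identity
$$([a_P]_P,\,L/K)=(a_P,\,L_P/K_P),\qquad a_P\in K_P^\times,$$
in which the local symbol on the right is read inside the decomposition group $G(L_P/K_P)\subseteq G(L/K)$. The first observation is that the right-hand side is exactly the product symbol evaluated at $[a_P]_P$: this idele has component $1$ at every place $Q\neq P$ and the local symbol of $1$ is trivial, so the definition $[\alpha,L/K]=\prod_Q(\alpha_Q,L_Q/K_Q)$ gives at once $[[a_P]_P,L/K]=(a_P,L_P/K_P)$. The proposition is therefore the special case, at $\alpha=[a_P]_P$, of the local--global identity $(\ ,L/K)=[\ ,L/K]$ between two homomorphisms $C_K\to G(L/K)$, and it is this identity that I would establish.

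Both symbols are compatible with the projection $G(L/K)\to G(L_\chi/K)$ onto the cyclic quotient attached to a character $\chi$ of $G(L/K)$ --- for $[\ ,L/K]$ because the local symbols are compatible with subextensions, for $(\ ,L/K)$ by the functoriality of the abstract reciprocity map (Proposition \ref{com}) --- so it suffices to treat cyclic $L/K$. On the $\hat{\mathbb Z}$-extension the two already coincide: Proposition \ref{main} gives $deg_K\circ(\ ,\tilde K/K)=v_K$, while $v_K$ was defined as $deg_K\circ[\ ,\tilde K/K]$ and $deg_K$ is injective on $G(\tilde K/K)$, whence $(\ ,\tilde K/K)=[\ ,\tilde K/K]$. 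More generally, at a place $P$ unramified in $L$ the product symbol sends the prime idele $[\pi_P]_P$ to the Frobenius $\varphi_P\in G(L_P/K_P)$ by local class field theory and a unit idele $[u_P]_P$ to $1$; the abstract symbol sends $[u_P]_P$ to $1$ as well, since $U_P\subseteq N_{L_{\mathfrak P}/K_P}U_{\mathfrak P}$ forces $[u_P]_P\in N_{L/K}I_L$ by Proposition \ref{norm}, and it sends $[\pi_P]_P$ to the same $\varphi_P$ by the unramified compatibility of the abstract map, whose number-field proof (\cite{NANT}) carries over unchanged. Thus $(\ ,L/K)$ and $[\ ,L/K]$ agree on every idele supported at places unramified in $L$.

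The decisive and hardest point is agreement at the finitely many ramified places, which is equivalent to the global reciprocity law $\prod_Q(b,L_Q/K_Q)=1$ for $b\in K^\times$. This is the main obstacle: the natural move of pushing a ramified component onto unramified places by multiplying with a well-chosen principal idele (approximation theorem) is circular, since it presupposes that the product symbol kills $K^\times$. I would break the circularity by proving the reciprocity law independently, through global duality, for cyclic $L/K$ of prime-power degree. In the tame case this is the number-field Kummer computation and adapts without change for all powers of $\ell\neq p$. For the wild case I would use Theorem \ref{dual}, where the isomorphism $C_K/C_K^p\cong G_p$ is $a\mapsto\sigma(a)$ with $\sigma(a)(\mathscr{P}^{-1}x)=\mathscr{P}^{-1}x+\psi(x,a)$ and, crucially, $\psi(x,a)=\sum_Q\psi_Q(x,a_Q)$ is a sum of local pairings whose $Q$-term is the local Artin--Schreier symbol $(a_Q,L_Q/K_Q)$; hence $\sigma$ restricts to $[\ ,L/K]$ on an exponent-$p$ extension, and Proposition \ref{pairing}, identifying the kernel of $I_K$ in $\psi$ as $K^\times I_K^p$, shows that $\sigma$ --- and therefore $[\ ,L/K]$ --- vanishes on $K^\times$. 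The cyclic extensions of degree $p^m$ with $m\geq 2$ are handled by the Witt-vector analogue of this pairing (\cite{Witt}).

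With the reciprocity law in hand the circularity is gone. Both symbols now vanish on $K^\times$ and agree on every idele supported at unramified places; moreover, for a ramified component $[\gamma_P]_P$ with $\gamma_P$ a local norm, both give $1$ (the abstract one because $[\gamma_P]_P\in N_{L/K}I_L$). Given an arbitrary class, the approximation theorem furnishes $b\in K^\times$ making $b\alpha$ a local norm close to $1$ at every ramified place, so that $b\alpha$ splits as an unramified-supported idele times ramified local-norm factors; applying the two symbols and using that they vanish on $K^\times$ yields $(\ ,L/K)(\alpha)=[\ ,L/K](\alpha)$. This proves $(\ ,L/K)=[\ ,L/K]$ on $C_K$ and hence the commutativity of the diagram. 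I expect essentially all the genuine difficulty to sit in the wild reciprocity law of the previous paragraph; the remaining ingredients are either formal functoriality or direct imports of the characteristic-zero argument.
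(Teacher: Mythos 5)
Your reduction of the diagram to the single identity $(\ ,L/K)=[\ ,L/K]$, the observation that $[[a_P]_P,L/K]=(a_P,L_P/K_P)$ holds by definition of the product symbol, the passage to cyclic extensions via characters, and the final assembly by weak approximation are all sound, and this skeleton is the same as that of the number-field argument the paper is citing. You have also located the difficulty correctly: everything rests on (i) the product formula $\prod_Q(b,L_Q/K_Q)=1$ for $b\in K^\times$ and (ii) the fact that the abstract symbol sends $[\pi_P]_P$ to the Frobenius at an unramified $P$. Be aware, though, that (ii) is not merely ``formal functoriality'': it is exactly the point where the definition of $r_{L/K}$ via Frobenius lifts must be used, through the computation $[N_{\Sigma/K}\pi_\Sigma,L/K]=[\pi_\Sigma,L\Sigma/\Sigma]|_L=(\pi_\Sigma,\tilde{\Sigma}/\Sigma)|_L=\tilde{\sigma}|_L$ together with Proposition \ref{main}; a prime of $C_\Sigma$ is an idele class with $v_\Sigma=1$, not an idele supported at a single place, so this step does not follow from unramified local theory alone and should be written out.

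The genuine gap is in your proof of the wild product formula. Theorem \ref{dual} and Proposition \ref{pairing} are statements about the residue pairing $\psi(x,a)=\sum_Q \mathrm{Tr}(\mathrm{res}_Q(x\,da_Q/a_Q))$ and the Artin--Schreier pairing $[x,\sigma]=(\sigma-1)\mathscr{P}^{-1}x$; nowhere does the paper identify the local term $\psi_Q(x,a_Q)$ with the action of the local norm-residue symbol $(a_Q,K_Q(\mathscr{P}^{-1}x)/K_Q)$ on $\mathscr{P}^{-1}x$. That identification is Schmid's formula, a nontrivial local computation comparing the local reciprocity map with the residue pairing, and it is precisely what your assertion ``$\sigma$ restricts to $[\ ,L/K]$ on an exponent-$p$ extension'' requires; without it, Proposition \ref{pairing} only says that $K^\times$ is annihilated by $\psi$, not by $\prod_Q(\ ,L_Q/K_Q)$. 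The same issue recurs in your tame case: the reciprocity law for $\ell\neq p$ is not the Kummer computation of the second inequality, and proving it by duality would again need an $\ell$-Hilbert-symbol product formula, i.e.\ another local identification of the same kind. The route the paper actually intends (``same as the number-field case'') instead reduces a general cyclic extension to the constant-field extension $\tilde{K}/K$ --- where Proposition \ref{Reciprocity} applies --- by crossing with auxiliary extensions furnished by Corollary \ref{fc1}/\ref{fc2}. So either supply the local Schmid-type formulas (and their Witt-vector version for degree $p^m$), or replace this part of your argument by the crossing reduction; as it stands the central step is asserted rather than proved.
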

						\begin{proof}
							Exactly same as number field case.
						\end{proof}
						
			As in the abstract case, we can take inverse limit of the global norm residue symbol to obtain map $$(\text{ }, K^{ab}/K):C_{K}\rightarrow G(K^{ab}/K).$$ The image of this map is not surjective unlike the number field case (for instance $(\text{ }$, $\tilde{K}/K)$ is not surjective since the image is isomorphic to $\mathbb{Z}$). Also the kernel of this map is clearly given as the intersection of all $N_{L/K}C_L$ which can be proved to be connected component of $1$ (\cite{AT}). This is non-trivial for number fields but not for global function fields. So $(\text{ }, K^{ab}/K):C_{K}\rightarrow G(K^{ab}/K)$ is injective.
			\newpage
\appendix
%\addcontentsline{toc}{chapter}{Appendix}
\addtocontents{toc}{\protect\contentsline{chapter}{Appendix:}{}}
	\section{Full proof of the Second inequality}
	
		In this appendix we fill the gap in the proof of the Second inequality shown previously. We aim to prove Proposition \ref{pairing}.
			
				For this proof, a certain result on derivation will be needed. We recall that a derivation $D$ of a field $L$ is an additive map of $L$ to itself which satisfies the Leibniz rule
				$$D(xy)=xD(y)+yD(x).$$ 
				\begin{Def}
					An element $x\in L$ is called a logarithmic derivative in $L$ if there exists an element $y \in L$ such that $x=\frac{D(y)}{y}$. 
				\end{Def}

			Let $D$ be a derivation on $L$. The additive group of all $x\in L$ for which $D^n(x)=0$ shall be denoted by $P_n$. Let $D^0$ be an identity which means $P_0=\{0\}.$  Also we denote the additive operator that multiplies the elements of $L$ by a given element $y$ of $L$ also by $y$. There may be confusion as to whether $y$ denotes an element or the operator. For this we shall write $Dy$ for the operator product $y$ followed by $D$ where as $D(y)$ means the effect $D$ has on $y$.
			
			\begin{lemma*}
				An element $x$ of $L$ is a logarithmic derivative of an element $y\in P_n\backslash P_{n-1}$ $(n>0)$ if and only if $(D+x)^n(1)=0$ but $(D+x)^{n-1}(1)\neq 0$.
			\end{lemma*}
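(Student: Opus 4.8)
The plan is to reduce everything to a single operator identity. Suppose $x$ is the logarithmic derivative $x=D(y)/y$ of a nonzero $y\in L$, where, following the convention of the text, $x$ also denotes the operator ``multiply by $x$''. The Leibniz rule gives, as operators, $Dy=yD+D(y)$, and hence $y^{-1}Dy=D+D(y)y^{-1}=D+x$. Because the factors $y$ and $y^{-1}$ telescope, iterating this yields $(D+x)^k=y^{-1}D^k y$ for every $k\ge 0$, so that applying both sides to the element $1$ produces the crucial formula $(D+x)^k(1)=y^{-1}D^k(y)$. This formula is the whole engine of the proof.

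First I would treat the forward direction. Assume $x=D(y)/y$ with $y\in P_n\setminus P_{n-1}$, noting that $y\neq 0$ since $0\in P_{n-1}$. Substituting $k=n$ and $k=n-1$ into the formula above gives $(D+x)^n(1)=y^{-1}D^n(y)=0$, because $D^n(y)=0$, and $(D+x)^{n-1}(1)=y^{-1}D^{n-1}(y)\neq 0$, because $D^{n-1}(y)\neq 0$ and $y^{-1}\neq 0$ in the field $L$. This is exactly the asserted pair of conditions.

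For the converse I would construct $y$ explicitly. Set $w=(D+x)^{n-1}(1)$, which is nonzero by hypothesis. Since $(D+x)(w)=(D+x)^n(1)=0$, we obtain $D(w)+xw=0$, that is $D(w)=-xw$. Now put $y=w^{-1}$; the computation $D(w^{-1})=-w^{-2}D(w)=xw^{-1}$ shows $D(y)=xy$, so $x=D(y)/y$ is indeed a logarithmic derivative, of $y$. It remains to place $y$ in $P_n\setminus P_{n-1}$, and this is exactly where the operator identity pays off: now that $x=D(y)/y$ is known, the formula $(D+x)^k(1)=y^{-1}D^k(y)=wD^k(y)$ applies, and comparing the case $k=n-1$ with $(D+x)^{n-1}(1)=w$ forces $D^{n-1}(y)=1$. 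Consequently $D^n(y)=D(1)=0$, so $y\in P_n$, while $D^{n-1}(y)=1\neq 0$ gives $y\notin P_{n-1}$, as required.

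The only genuinely delicate point is the operator identity and its iteration: one must keep careful track of the distinction, emphasised in the text, between the element $D(y)$ and the corresponding multiplication operator, and verify that the conjugates telescope as $(y^{-1}Dy)^k=y^{-1}D^k y$; everything after that is formal. I expect the converse to be the part most worth spelling out, since the step of passing from the kernel element $w$ of $D+x$ to its inverse $y=w^{-1}$ — which flips the sign of the logarithmic derivative — is precisely what links the hypothesis $(D+x)^n(1)=0$ to the membership of $y$ in $P_n\setminus P_{n-1}$.
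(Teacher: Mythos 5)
Your proof is correct and follows essentially the same route as the paper: both hinge on the conjugation identity $D+x=y^{-1}Dy$, hence $(D+x)^k(1)=y^{-1}D^k(y)$, and in the converse both define $y$ as the inverse of the nonzero element $(D+x)^{n-1}(1)$ and feed it back into the same identity. Your write-up is in fact slightly cleaner at the step $D(w^{-1})=-w^{-2}D(w)=xw^{-1}$, where the paper's displayed equation contains a typographical slip.
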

			\begin{proof}
				We first start by noting that for a derivation $D$, $D(1)=0$ and $D(x^{-1})=-x^{-2}D(x)$. This follows from the definition of a derivation.
				
				Let $x=\frac{D(y)}{y}$ for some $y\in L$. For all $z\in L$ we have $(D+x)(z)=D(z) +\frac{D(y)}{y}z=y^{-1}D(y)=y^{-1}Dy(z).$ This means in operators that $D+x=y^{-1}Dy$ and consequently for any $n\geq0$ that 
				$$(D+x)^n=y^{-1}D^ny.$$
				Assume now in addition that $y\in P_n\backslash P_{n-1}$. Then $(D+x)^n(1)=y^{-1}D^ny(1)=y^{-1}D^n(y)=0$ and similarly $(D+x)^{n-1}\neq0$. Assume conversely that $(D+x)^n(1)=0$ and $(D+x)^{n-1}(1)\neq 0$. Put
				$$(D+x)^{n-1}(1)=y^{-1}.$$ Then $(D+x)(y^{-1})=0=-y^{-2}D(y)xy^{-1}$ and consequently $x=D(y)/y.$ Therefore $D+x=y^{-1}Dy,$ $(D+x)^{n-1}(1)=y^{-1}D^{n-1}(y)\neq 0$ and similarly $y^{-1}D^n(y)=0$. Hence $y\in P_n\backslash P_{n-1}.$
			\end{proof}
			Usually the derivation $D$ has the additional property that for each $x$ in $L$ we have $D^n(x)=0$ for some $n$. Then the above Lemma shows that an element $x\in L$ is a logarithmic derivative in $L$ if and only if $(D+x)^n(1)=0$ for some $n$. If $x$ is now an element of a subfield $K$ of $L$ that is stable under $D$, (i.e.$D(K)\subset K,$) and if $x$ is an element of $K$, then $x$ is a logarithmic derivative in $K$ if and only if it is a logarithmic derivative in $L$. This situation occurs in the following case: Let $F$ be a field of characteristic $p\geq 0$ and $L=F((t))$ the field of formal power series in $t$ with coefficients in $F$. If $D$ is the ordinary differentiation with respect to $t$ then $D^p=0$. Therefore we obtain:
			
				\begin{cor*}\label{derivation}
					Let $L=F((t))$ where $char(F)=p>0$ and $K$ be a subfield stable under the ordinary derivative $D$. Then an element $x\in K$ is a logarithmic derivative in $L$ if and only if it is a logarithmic derivative in $L$. 
				\end{cor*}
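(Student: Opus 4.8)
The plan is to deduce the Corollary directly from the starred Lemma, reading its conclusion as the assertion that $x$ is a logarithmic derivative in $L$ if and only if it is one in $K$. The whole point is that the operator criterion $(D+x)^n(1)=0$ furnished by the Lemma is intrinsic: it refers only to $x$, to $D$, and to the single element $1$, and all the iterates it produces stay inside $K$ as soon as $x\in K$ and $D(K)\subset K$. So I would reduce both notions of logarithmic derivative to one and the same vanishing condition and read off the equivalence.

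First I would record that the ordinary derivative $D$ on $L=F((t))$ satisfies $D^p=0$. On a monomial $t^i$ one has $D^p(t^i)=i(i-1)\cdots(i-p+1)\,t^{i-p}$, and the coefficient is a product of $p$ consecutive integers, hence divisible by $p$ and therefore zero in $F$; by $F$-linearity and $t$-adic continuity this propagates to every Laurent series. Consequently $D$ is uniformly nilpotent, so the running hypothesis of the Lemma (``$D^n(x)=0$ for some $n$'') holds with $n=p$ for every element, and the same is true for the restriction $D|_K$ since $K$ is stable under $D$.

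Next I would package the Lemma into the clean criterion already announced in the text: for any $D$-stable field $M$ with $D^p=0$ and any $x\in M$, the element $x$ is a logarithmic derivative in $M$ if and only if $(D+x)^p(1)=0$ in $M$. In the forward direction, if $x=D(y)/y$ with $0\neq y\in M$, then $y\in P_n\setminus P_{n-1}$ for a unique $1\le n\le p$ (because $D^p=0$ forces $y\in P_p$ while $y\notin P_0$), and the Lemma gives $(D+x)^n(1)=0$, whence $(D+x)^p(1)=0$ since once an iterate vanishes the next does too. Conversely, taking the least $n$ with $(D+x)^n(1)=0$ makes $(D+x)^{n-1}(1)\neq 0$ invertible; setting $y^{-1}=(D+x)^{n-1}(1)$ and applying the Lemma in reverse recovers $x=D(y)/y$.

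Finally I would apply this criterion twice, once with $M=L$ and once with $M=K$. Because $x\in K$ and $D(K)\subset K$, the operator $D+x$ maps $K$ into itself, so starting from $1\in K$ every iterate $(D+x)^m(1)$ lies in $K$ and is literally the same element of $L$ whether the operators are viewed as acting on $K$ or on $L$. Hence the single condition $(D+x)^p(1)=0$ holds in $K$ exactly when it holds in $L$, and the criterion converts this into the desired equivalence. I do not anticipate a genuine obstacle here; the one point worth emphasizing---and the only place the argument could look incomplete---is that the witness $y^{-1}=(D+x)^{n-1}(1)$ is produced inside $K$ automatically, so one never has to argue separately that a logarithmic-derivative witness existing in $L$ can be chosen in $K$.
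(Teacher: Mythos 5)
Your proposal is correct and follows essentially the same route as the paper, which derives the Corollary from the starred Lemma by observing that $D^p=0$ on $F((t))$ and that the criterion $(D+x)^n(1)=0$ involves only $x$, $D$, and $1$, hence takes place entirely inside $K$ when $x\in K$ and $D(K)\subset K$. You merely supply details the paper leaves implicit (the monomial computation showing $D^p=0$ and the observation that the witness $y^{-1}=(D+x)^{n-1}(1)$ is automatically produced in $K$), which is exactly the right way to flesh out the argument.
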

			
			Now we proceed to proof of Proposition \ref{pairing}.			
			\begin{Prop*}
				The kernel of $I_K$ in our pairing $\psi$ is precisely $K\cdot I_K^p$ and the kernel of $K$ is the additive group $\mathscr{P}K$.
			\end{Prop*}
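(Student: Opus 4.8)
The plan is to prove the two equalities as four separate inclusions --- two soft and two hard --- rather than through a single duality argument. Throughout write $\ker_K$ and $\ker_{I_K}$ for the kernels of $K$ and of $I_K$ in the pairing $\psi$, in the sense defined above. The computation I would set up first is the reformulation $\psi(x,\alpha)=f(x\gamma)\,dt$, where $\gamma\in\mathbb{A}_K$ is the logarithmic-derivative adele with components $\gamma_P=\alpha_P^{-1}\,d\alpha_P/dt$ (this is an adele by Lemma~\ref{dt/dp}). Indeed $\lambda(x\gamma)=\sum_P Tr_P\big(res_P(x\,d\alpha_P/\alpha_P)\big)$, so applying $Tr'$ recovers $\psi(x,\alpha)=\sum_P\psi_P(x,\alpha_P)$. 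This identity is the channel through which the analytic input enters.

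For the soft inclusions, consider first $K^\times I_K^p\subseteq\ker_{I_K}$. If $\alpha=c\beta^p$ with $c\in K^\times$ and $\beta\in I_K$, then in characteristic $p$ the local logarithmic derivative $d\alpha_P/\alpha_P$ equals $dc/c$ at every place, so $\psi(x,\alpha)=\sum_P Tr\big(res_P(x\,dc/c)\big)$ with $x\,dc/c$ a genuine global differential; the vanishing of its residue sum is exactly the residue theorem already invoked to see that $\lambda$ kills $K$. For $\mathscr{P}K\subseteq\ker_K$, write $x=\mathscr{P}z=z^p-z$ with $z\in K$, so that $x$ is a local Artin--Schreier value at every place. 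At each place where $x$ is integral, $\psi_P(x,\alpha_P)=0$ by Lemma~\ref{local pairing}; at the finitely many poles of $z$ one extends that computation and checks that $res_P(x\,d\alpha_P/\alpha_P)$ lands in $\mathscr{P}F_P$ and is therefore annihilated by $Tr$, so the total sum vanishes.

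For the hard inclusion $\ker_K\subseteq\mathscr{P}K$: given $x$ with $\psi(x,-)\equiv 0$, testing against the one-place ideles $[y]_P$ gives $\psi_P(x,y)=0$ for all $y\in K_P^\times$. At each of the (almost all) places where $x$ is integral, taking $y$ with $v_P(y)$ prime to $p$ forces $x\in\mathscr{P}K_P$ by Lemma~\ref{local pairing}. Thus $x$ is a local Artin--Schreier value at almost all places, i.e. $K(\mathscr{P}^{-1}x)/K$ splits completely almost everywhere, whence $K(\mathscr{P}^{-1}x)=K$ by Corollary~\ref{fc1}; that is, $x\in\mathscr{P}K$.

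The crux, and where I expect the real work to lie, is $\ker_{I_K}\subseteq K^\times I_K^p$. Suppose $\psi(x,\alpha)=0$ for all $x\in K$. By the reformulation this reads $f(\gamma x)\,dt=0$ for all $x\in K$, and the lemma that such vanishing forces an adele into $K$ (a consequence of the one-dimensionality in Proposition~\ref{dc}) yields $\gamma\in K$: the local logarithmic derivatives $\alpha_P^{-1}\,d\alpha_P/dt$ agree with a single global element $w\in K$. Now $w$ is a logarithmic derivative in every completion $K_P=F_P((t))$, so the corollary on logarithmic derivatives --- an element of the $D$-stable subfield $K$ is a logarithmic derivative in $K_P$ if and only if it is one in $K$ --- produces $c\in K^\times$ with $dc/(c\,dt)=w$. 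Then $\alpha_P/c$ has vanishing derivative at every place, hence lies in $\ker(d/dt)=(K_P)^p$ because $F_P$ is perfect, and collecting these $p$-th roots gives $\alpha\in K^\times I_K^p$. The two delicate points are the precise identification $\psi(x,\alpha)=f(x\gamma)\,dt$ and the descent from local to global logarithmic derivative via the derivation corollary; the remainder is bookkeeping with residues and the trace.
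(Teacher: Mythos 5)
Your treatment of the two hard inclusions coincides with the paper's: for $\ker_{I_K}\subseteq K^\times I_K^p$ you form the logarithmic-derivative adele $\gamma=\alpha^{-1}\,d\alpha/dt$, use the vanishing $f(x\gamma)\,dt=0$ together with Proposition \ref{dc} to place $\gamma$ in $K$, descend from a local to a global logarithmic derivative via the corollary on derivations, and divide by the resulting $c\in K^\times$ to land in $I_K^p$; for $\ker_K\subseteq\mathscr{P}K$ you test against one-place ideles and invoke Lemma \ref{local pairing} and Corollary \ref{fc1}, exactly as in the appendix. (One small slip there: $t$ is a prime only at the chosen place $Q$, so the identification $K_P=F_P((t))$ and the appeal to the derivation corollary should be made at $P=Q$ alone, which is all you need.) The one genuine divergence is the easy inclusion $\mathscr{P}K\subseteq\ker_K$. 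The paper handles the poles of $x$ by weak approximation: having already shown $K^\times\subseteq\ker_{I_K}$, it replaces $\alpha$ by $y\alpha$ with $y\in K^\times$ chosen so that the components at the poles of $x$ are close to $1$, killing those local terms by continuity, and applies Lemma \ref{local pairing} everywhere else. You instead assert that at a pole of $z$ one still has $res_P\bigl((z^p-z)\,d\alpha_P/\alpha_P\bigr)\in\mathscr{P}F_P$. That is true, but it rests on the identity $res(z^p\,dg/g)=\bigl(res(z\,dg/g)\bigr)^p$ (a Cartier-operator fact), which is not among the residue properties stated in the paper and which you leave as ``one checks''; as written this step is a gap, though an easily repairable one --- either prove that identity or substitute the paper's approximation argument. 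Everything else is sound, and your justification of $K^\times I_K^p\subseteq\ker_{I_K}$ via $d(c\beta^p)/(c\beta^p)=dc/c$ and the residue theorem is more explicit than the paper's one-line remark.
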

			\begin{proof}
				We first observe that $K$ and $I_K^p$ lie trivially in the kernel and therefore so does $K\cdot I_K^p$. 
				
				Now suppose $\alpha \in I_K$ is in the kernel. Then the Adele $\zeta=\frac{1}{\alpha}\frac{d\alpha}{dt}$ has the property that $f(x\zeta)dt=0$ for all $x\in K$ and is therefore an element $y$ of $K$. Let $Q$ be a place of $K$ and let $t_Q$ be a prime. By taking $Q$ component of $a$ we obtain 
				$$y=\frac{1}{\alpha_Q}\frac{d\alpha_Q}{dt}.$$ $a$ is therefore a logarithmic derivative in $K_Q$. The field $K$ is a subfield of $K_Q$ which is stable under differentiation since $t\in K$. So by the Corollary above we obtain that $y$ is a logarithmic derivative in $K$. Write $y=\frac{1}{z}\frac{dz}{dt}$ where $z\in K^\times$. We obtain for our Idele: $\frac{1}{\alpha_Q}\frac{d\alpha_Q}{dt}=\frac{1}{z}\frac{dz}{dt}.$ Define another Idele $\beta =z^{-1}\alpha$. Just by explicit differentiation we obtain $\frac{d\beta}{dt}=0$. That is to say for each place $P$, $\frac{d\beta_P}{dt}=0$. This means each component is a $p$-th power. Consequently $\beta\in I_K^p$ and $\alpha \in K\cdot I_K^p$.
				
				Suppose $x=z^p-z$ for some $z\in K$. By the approximation theorem we can find an element $y\in K$ such that the Idele $y_P$ is as close to $1$ as we want at all places $P$ where $x$ has a pole. At these primes $\psi_P(x,y\alpha_P)=0$. Furthermore $\psi(x,\alpha)=\psi(x,y\alpha)$ since $y$ is in the kernel of $I_K$. If $Q$ is a place where $x$ has no pole then $x\in \mathcal{O}_P$ and by Lemma \ref{local pairing} $\psi_Q(x,y\alpha_P)=0$. We recall that $\psi$ is equal to sum of $\psi_P$ so $x$ is in the kernel of $K$. 
				
				Conversely let $x$ be in the kernel of $K$. By considering Ideles of the form $[a_P]$ where $a_P\in K_P$ for each place $P$, we obtain $\psi_P(x,\alpha_P)=0$. Let $Q$ be a place where $x$ has no pole. At such a prime, Lemma \ref{local pairing} applies and tells that $x=z_Q^p-z_Q$ with $z_Q\in K_Q$. Such a prime completely splits in the global extension $K(\mathscr{P}^{-1}x)$ and by Corollary \ref{fc1} $K(\mathscr{P}^{-1}x)=K$. That is to say $x\in \mathscr{P} K$.
			\end{proof}	 	 		 		
		
			 \newpage
			 \bibliographystyle{plain}
			 \bibliography{reference}
			\end{document}